\newcommand{\beq}{\begin{equation}}
\newcommand{\eeq}{\end{equation}}
\newcommand{\bea}{\begin{eqnarray}}
\newcommand{\eea}{\end{eqnarray}}
\newcommand{\beas}{\begin{eqnarray*}}
\newcommand{\eeas}{\end{eqnarray*}}
\newtheorem{theorem}{Theorem}[section]
\newtheorem{definition}[theorem]{Definition}
\newtheorem{proposition}[theorem]{Proposition}
\newtheorem{corollary}[theorem]{Corollary}
\newtheorem{lemma}[theorem]{Lemma}
\newtheorem{remark}[theorem]{Remark}
\newtheorem{example}[theorem]{Example}
\newtheorem{examples}[theorem]{Examples}
\newtheorem{foo}[theorem]{Remarks}
\newcommand{\lp}{\left(}
\newcommand{\rp}{\right)}
\newcommand{\lc}{\left[}
\newcommand{\rc}{\right]}
\newcommand{\hG}{\mathbb G}
\newcommand{\vertiii}[1]{{\left\vert\kern-0.25ex\left\vert\kern-0.25ex\left\vert #1 
    \right\vert\kern-0.25ex\right\vert\kern-0.25ex\right\vert}}
\def\ch{{\mathcal H}}
\def\bch{\bar{\mathcal H}}
\def\cH{{\mathcal H}}
\def\me{{\mathbb  E}}
\def\mr{{\mathbb  R}}
\def\mp{{\mathbb  P}}
\def\D{{\mathbf D}}
\def\J{{\mathbf J}}
\def\bch{{\bar{\mathcal{H}}}}
\def\eps{\varepsilon}
\numberwithin{equation}{section}
\title[]{Density of the signature process of fBm}
\author{Fabrice Baudoin}
\address{Department of Mathematics, University of Connecticut, Storrs, CT 06269.}
\email{fabrice.baudoin@uconn.edu}
\author{Qi Feng}
\address{Department of Mathematics, University of Southern California, Los Angeles, CA 90089.}
\email{qif@usc.edu}
\author{Cheng Ouyang}
\address{Department of Mathematics, Statistics, and Computer Science, University of Illinois at Chicago, Chicago, IL 60607. }
\email{couyang@math.uic.edu}
\thanks{The third author was supported in part  by Simons grant \#355480}
\date{April 19, 2019.}
\keywords{Rough path, fractional Brownian motion, signature, density.}
\subjclass[2010]{Primary: 60H10, 60D05, 58J65, 60H07 }
\begin{document}
\maketitle

\begin{abstract}
We study the density of the signature of fractional Brownian motions with parameter $H>1/4$. In particular, we prove existence, smoothness, global Gaussian upper bounds and Varadhan's type asymptotics for this density. A key result is that the estimates on the density we obtain are controlled by the Carnot-Carath\'eodory distance of the group.
\end{abstract}

\tableofcontents

\section{Introduction}
\noindent{\bf Background and motivation.} For a vector-valued path $\gamma$ with bounded variation, the {\it signature} of $\gamma$ (up to order $N$) is defined from the iterated integrals of $\gamma$. More precisely,
\begin{align}\label{signature intro}S_N(\gamma)_t=\sum_{k=0}^N\int_{0<t_1<\cdots<t_k<t}d\gamma_{t_1}\otimes\cdots\otimes d\gamma_{t_k},\quad t\in[0,1],\end{align}
where we have taken the convention that $S_0(\gamma)_t\equiv 1$. It was first introduced by Chen \cite{Chen54} in the 50's  to study homotopy theory and loop space homology. 

More recent study reveals that the signature $S_N(\cdot)$ can be extended to a much larger class of paths that becomes a fundamental object in Lyons' rough path theory \cite{Lyons Saint-Flour}. In particular, for a $d$-dimensional fractional Brownian motion $B$, it is known that the signature $S_N(B)_t$ of $B$ exists almost surely  when the Hurst parameter $H>1/4$ (\cite[Chapter 15]{FV2010}). Clearly, $S_N(B)_t$ lives in the truncated tensor algebra $T_N(\mr^d)$. By the very definition in \eqref{signature intro},  $S_N(B)_t$ satisfies a canonical SDE on $T_N(\mr^d)$,
\begin{align}\label{signature SDE1 intro}
dS_N(B)_t=S_N(B)_t\otimes dB_t.
\end{align}
Regarding $T_N(\mr^d)$ as a flat linear space, we can recast equation \eqref{signature SDE1 intro} in a more Euclidean way as follows,
\begin{align}\label{signature SDE2 intro}
dS_N(B)_t=\sum_{i=1}^d W_i(S_N(B)_t) dB^i_t.
\end{align}
Here $W_i, i=1,...,d$ are polynomial vector fields on $T_N(\mr^d)$ (see e.g. equation (3.3) of Kusuoka-Stroock \cite{KS87} for an explicit formula of $W_i$'s). We remark here that both equation \eqref{signature SDE1 intro} and \eqref{signature SDE2 intro} are understood in the framework of Lyons' rough path theory.

However, the tensor algebra $T_N(\mr^d)$ is too large for the the process $S_N(B)_t$. The signature $S_N(B)_t$ indeed lives in a strict subspace $\mathbb{G}_N(\mr^d)$ of $T_N(\mr^d)$, known as the {free Carnot group} over $\mr^d$ of step $N$ (see, e.g., \cite{Bau, BC06}). Therefore we can restrict equation \eqref{signature SDE2 intro} to $\mathbb{G}_N(\mr^d)$, and in this case the vector fields $W_i$ form a uniform hypoelliptic system  on $\mathbb{G}_N(\mr^d)$. We thus are  interested in the existence of a probability density function of $S_N(B)_t$ with respect to the Haar measure on $\mathbb{G}_N(\mr^d)$ and in the properties of this density, if it exists.

\bigskip
\noindent{\bf Main results.} Hypoeliptic SDEs driven by a fractional Brownian motion have been studied extensively in recent literature. For example, the existence of a smooth density function has been proved in \cite{BH, CHLT},  Varadhan estimates has been established in \cite{BOZ15}, and complete small time asymptotics is obtained in a recent preprint \cite{IN19}.  However, all the aforementioned works assume that the vector fields are $C^\infty$-bounded. In our current situation, the vector fields $W_i$'s are  of polynomial order, hence not bounded.  

Despite the  technical difficulty mentioned above regarding unbounded vector fields, the main motivation of our investigation is: (1) the signature  $S_N(B)_t$ of $B$ is a canonical process on the Lie group $\mathbb{G}_N(\mr^d)$ satisfying a canonical hypoelliptic SDE. A better understanding of it may shed a light in understanding more general hypoellitic SDEs; (2) we are interested to see whether the group and dilation structure on $\mathbb{G}_N(\mr^d)$ can help us obtaining sharper results than in a general setting. 

The main results obtained through our investigation are summarized as follows.
\begin{itemize}
\item[(i)] When $H>1/4$, the signature of the fractional Brownian motion $X_t=S_N(B)_t$ admits a smooth density function on $\mathbb{G}_N(\mr^d)$ with respect to the Haar measure of $\mathbb{G}_N(\mr^d)$.
\item[(ii)] Denote by $p_t(g)$ the density function of $X_t$ in (i), we have
\begin{align*}
p_t(g)\leq \frac{C}{t^{\nu/2}} e^{-\frac{\|g\|_{\textsc{CC}}^2}{Ct^{2H}}}, \quad\mathrm{for\ all}\ g\in\frak{g}_N(\mr^d).
\end{align*}
In the above, $\nu$ is the Hausdorff dimension (=homogeneous dimension) of $\mathbb{G}_N(\mr^d)$ and $\|\cdot\|_{\textsc{CC}}$ is the Carnot-Carath\'{e}odory norm on $\mathbb{G}_N(\mr^d)$, that is, the Carnot-Carath\'{e}odory distance between $g$ and $\mathbf{1}$, the group identity of $\mathbb{G}_N(\mr^d)$.
\item[(iii)] The density $p_t(g)$ is strictly positive on $\mathbb{G}_N(\mr^d)$. Hence, as an easy corollary of the self-similarity of $B$, one has
$$p_t(g)\geq\frac{c}{t^{\nu/2}},$$
for all $g\in\mathbb{G}_N(\mr^d)$ with $\|g\|_{\textsc{CC}}\leq t^H$.
\item[(iv)] Let $p_\epsilon(g)$ be the density of $S_N(\epsilon B)_t$. The following Varadhan estimate holds
\begin{align*}
\liminf_{\eps\downarrow 0} \eps^2\log p_\eps(g)\geq -\frac{1}{2}d^2_R(g),\quad
\textrm{and}\quad
\limsup_{\eps\downarrow0}\eps^2\log p_\eps(g)\leq -\frac{1}{2}d^2(g).
\end{align*}
Here $d$ and $d_R$ are the controlling ``distances" associated to equation \eqref{signature SDE2 intro} (see \eqref{def: d1} and \eqref{def: dr1} for a precise definition). Moreover, both $d$ and $d_R$ are equivalent to the Carnot-Carath\'eodory distance on $\mathbb{G}_N(\mr^d)$.
\end{itemize}
  
 Several remarks are in order.
 
 \begin{remark}
 The existence of a smooth density for hypoelliptic SDEs driven by fractional Brownian motions has been established in \cite{BH, CHLT} and (inexplicitly) in \cite{BOZ15}. But all of the aforementioned  works assumed  $C^\infty$-bounded vector fields. So some extra effort is needed in our present setting since we are dealing  with unbounded vector fields.
 \end{remark}

 \begin{remark}
 It is generally expected that under suitable boundedness and non-degeneracy condition on the vector fields, SDEs driven by a fractional Brownian motion admit a Gaussian type density upper bound. But due to the limitation of rough path estimate and hence a lack of Gaussian concentration for the solution, the best results in the literature in this line are only sub-Gaussian bounds (see, e.g., \cite{BOT14, GOT18}).  Note that the upper bound in (ii) is of Gaussian type, and it  seems to be a first positive answer in this regard in a non-trivial setting. We also would like to mention that the power of $t$ before the exponential in (ii) is sharp and matches the local lower bound in (iii).
 \end{remark}

 \begin{remark}
 The strict positivity of the density of the signature was proved in the Brownian case in Kusuoka-Stroock \cite{KS87}, using support theorem and Markov property. This approach certainly breaks down in our current setting, as fractional Brownian motions are in general not Markovian. To overcome this difficulty, we have to resort to an approach based on Malliavin calculus (see, e.g., \cite{B-N Saint-Flour}). The proof then boils down to verifying that the It\^{o}-Lyons map associated to equation \eqref{signature SDE2 intro} is a submersion from the Cameron-Martin space of $B$ to $\mathbb{G}_N(\mr^d)$ (see Theorem \ref{immersion} below).
 \end{remark}
 
 \begin{remark}By comparing the exponential terms in (ii) and (iii) to the Varahdan estimate in (iv), it is natural to wonder whether the controlling ``distances" $d$ and $d_R$ are comparable to the Carnot-Carath\'{e}odory distance $\|\cdot\|_{\textsc{CC}}$.  We are able to give it an affirmative answer.  This is considered the main contributions in the section of Varadhan estimate of this paper. 
\end{remark}

\begin{remark}
 We would like to say a few more words about the controlling ``distance". Indeed, we do not know whether it is  a distance function -- it seems difficult to establish the triangle inequality. Moreover,  it is not even clear to us that they are continuous, though the lower semi-continuity always holds, being  good rate functions in certain large deviation principles.
\end{remark}

\begin{remark}
Finally, we would like to mention that for the sake of presentation, we restricted ourselves to the case of the free Carnot group $\mathbb{G}_N(\mr^d)$, however the same results apply without any change to the study of the equation
\[
dX_t=\sum_{i=1}^d W_i(X_t) dB^i_t
\]
where the $W_i$'s form a basis of the first layer of any Carnot Lie algebra (not necessarily free) and $B$ is a fractional Brownian motion with parameter $H>1/4$.
\end{remark}

\bigskip
The rest of the paper is organized as follows. In Section 2, we present some preliminary materials on Malliavin calculus and free Carnot groups. Section 3 is devoted to the existence of the density function $p_t$. The upper bound of $p_t$ is then derived in Section 4. In Section 5, we prove the strict positivity of $p_t$, and in the last section we establish the Varadhan estimate and the equivalence of distances.

\section{Preliminaries}

\subsection{Malliavin calculus} \label{intro Malliavin calculus}

To fix notations and the conventions we use, we introduce the basic framework of Malliavin calculus in this subsection.  The reader is invited to read the corresponding chapters in  \cite{Nualart06} for further details. 

A fractional Brownian motion with parameter $H \in (0,1]$ is a continuous centered Gaussian process with covariance function
\[
R(s,t)=\frac{1}{2} \left( s^{2H}+t^{2H} +|t-s|^{2H} \right).
\]
By a $d$-dimensional fractional Brownian motion $(B_t)_{t \ge 0}$ we mean that $B_t$ can be written
\[
B_t=(B^1_t,\cdots,B^d_t)
\]
where the $B_i$'s are independent fractional Brownian motions with parameter $H$.

Let $\mathcal{E}$ be the space of $\mathbb{R}^d$-valued step
functions on $[0,1]$, and $\mathcal{H}$  the closure of
$\mathcal{E}$ for the scalar product:
\[
\langle (\mathbf{1}_{[0,t_1]} , \cdots ,
\mathbf{1}_{[0,t_m]}),(\mathbf{1}_{[0,s_1]} , \cdots ,
\mathbf{1}_{[0,s_m]}) \rangle_{\mathcal{H}}=\sum_{i=1}^d
R(t_i,s_i).
\]
 $\ch$ is called the reproducing kernel Hilbert space for fractional Brownian motion $B$. If we denote by $e_i, i=1,\dots, d$, the canonical basis of $\mr^d$, one can  construct an isometry $K^*_{H}:\ch\rightarrow L^2([0,1])$ with $$ K_{H}^*\mathbf{1}_{[0,t]}e_i=\mathbf{1}_{[0,t]}K_{H}(t,\cdot)e_i,$$ for some kernel function $K_H$. Furthermore, 
 denote by $\bch([0,1])$  the Cameron Martin space associated with fractional Brownian motion $B$. The kernel $K_H$ give an isometry $K_H: L^2([0,1])\to\bch([0,1])$ by \begin{equation*}
 K_H\psi := \int_0^\cdot K_H(\cdot,s) \psi(s)\, ds.
\end{equation*}
Clearly, by the definition of $K_H^*$ and $K_H$,  $\mathcal{K}=(K_H\circ K_H^*)^{-1}$ is then an isometry from $\bch{([0,1])}$ to $\ch([0,1])$.


Standard isometry arguments allow to define the Wiener integral $B(h)=\int_0^{1} \langle h_s, dB_s \rangle$ for any element $h\in\ch$, with the additional property $\mathbb{E}[B(h_1)B(h_2)]=\langle h_1,\, h_2\rangle_{\ch}$ for any $h_1,h_2\in\ch$.
  An $\mathcal{F}$-measurable real
valued random variable $F$ is said to be cylindrical if it can be
written, for a given $n\ge 1$, as
\begin{equation*}
F=f\lp  B(\phi^1),\ldots,B(\phi^n)\rp,
\end{equation*}
where $\phi^i \in \mathcal{H}$ and $f:\mathbb{R}^n \rightarrow
\mathbb{R}$ is a $C^{\infty}$ bounded function with bounded derivatives. The set of
cylindrical random variables is denoted by $\mathcal{S}$.

The Malliavin derivative is defined as follows: for $F \in \mathcal{S}$, the derivative of $F$ is the $\mathbb{R}^m$ valued
stochastic process $(\mathbf{D}_t F )_{0 \leq t \leq 1}$ given by
\[
\mathbf{D}_t F=\sum_{i=1}^{n} \phi^i (t) \frac{\partial f}{\partial
x_i} \left( B(\phi^1),\ldots,B(\phi^n)  \right).
\]
More generally, we can introduce iterated derivatives by
$
\mathbf{D}^k_{t_1,\ldots,t_k} F = \mathbf{D}_{t_1}
\ldots\mathbf{D}_{t_k} F.
$
\ For any $p \geq 1$,  we denote by
$\mathbb{D}^{k,p}$ the closure of the class of
cylindrical random variables with respect to the norm
\[
\left\| F\right\| _{k,p}=\left( \mathbb{E}\left( F^{p}\right)
+\sum_{j=1}^k \mathbb{E}\left( \left\| \mathbf{D}^j F\right\|
_{\mathcal{H}^{\otimes j}}^{p}\right) \right) ^{\frac{1}{p}},
\]
and
\[
\mathbb{D}^{\infty}=\bigcap_{p \geq 1} \bigcap_{k
\geq 1} \mathbb{D}^{k,p}.
\]

Let $F=(F^1,\ldots , F^n)$ be a random vector whose components are in $\mathbb{D}^\infty$. Define the Malliavin matrix of $F$ by
$$\gamma_F=(\langle \mathbf{D}F^i, \mathbf{D}F^j\rangle_{\ch})_{1\leq i,j\leq n}.$$
Then $F$ is called  {\it non-degenerate} if $\gamma_F$ is invertible $a.s.$ and
$$(\det \gamma_F)^{-1}\in \cap_{p\geq1}L^p(\Omega).$$
\noindent
It is a classical result that the law of a non-degenerate random vector admits a smooth density with respect to the Lebesgue measure on $\mr^n$.

\subsection{Signature, Log-signature and Free Carnot groups. }


The truncated tensor algebra $T_N(\mathbb{R}^{d})$ over $\mathbb{R}^{d}$ is given by
\[
T_N(\mathbb{R}^{d})=\bigoplus^{N}_{k=0}(\mathbb{R}^{d})^{\otimes k}
\]
with the convention that $(\mathbb{R}^{d})^{0}=\mathbb{R}$. All computations in this truncated algebra are done at degree at most $N$, i.e. $e_{i_1} \otimes  \cdots \otimes e_{i_k} =0$, if $k \ge N$.

\begin{definition}[Signature of the fractional Brownian motion]
Let $(B_t)_{t \ge 0}$ be a fractional Brownian motion with parameter $H>1/4$. The $T_N(\mathbb{R}^{d})$-valued path
\[
 X_t=S_N(B)_t=\sum_{k=0}^{N} \int_{\Delta^k [0,t]}  dB^{\otimes k}, \quad t \ge 0,
\]
is called the signature of $B$ of order $N$. 
\end{definition}
The iterated integrals appearing in the definition of the signature are understood in the sense of rough paths.
Note that the signature is the solution to a rough differential equation that writes
\begin{align}\label{SDE: signature}
	dX_t=X_t\otimes dB_t=\sum_{i=1}^dW_i(X_t)dB_t^i,\quad X_0=\mathbf{1}=(1,0,0,...)\in T_N(\mr^d),
\end{align}
where the $W_i$'s are polynomial vector fields on $T_N(\mr^d)$.

 It is a well-known theorem by Chen \cite{Chen54}  that the signature of a path is a Lie element. To be more precise, consider
\[
\hG_N(\mr^d) =\exp ( \mathfrak{g}_N(\mathbb{R}^d)),
\]
where $\mathfrak{g}_N(\mathbb{R}^d)$ is the Lie sub-algebra of $T_N(\mathbb{R}^{d})$ generated by the canonical basis $e_i, i=1,\dots,d,$   of $\mathbb{R}^d$, and the Lie bracket  is given by $[a,b]=a\otimes b-b\otimes a$.  Then Chen's theorem (see \cite{BC06} for the Lie group rough path version) asserts that for every $t \ge 0$, we almost surely have $X_t \in \hG_N(\mr^d) $.

Indeed, from the Chen-Strichartz formula (see \cite{Bau}), one has the following explicit following representation of $X_t$:
\[
X_t= \exp \left( \sum_{I, l(I) \le N } \Lambda_I (B)_t e_{I} \right),
\]
where:
\begin{itemize}
\item For $z \in T_N(\mathbb{R}^{d})$, $\exp (z) =\sum_{k=0}^N \frac{1}{k!} z^{\otimes k}$;
\item If $I\in \{1,...,d\}^k$ is a word, 
\[
e_I= [e_{i_1},[e_{i_2},...,[e_{i_{k-1}},
e_{i_{k}}]...],
\]
and
$
l(I)=k;
$
\item Let $\mathcal{S}_k$ be the set of the permutations of
$\{1,...,k\}$, then
\[
\Lambda_I (B)_t= \sum_{\sigma \in \mathcal{S}_k} \frac{\left(
-1\right) ^{e(\sigma )}}{k^{2}\left(
\begin{array}{l}
k-1 \\
e(\sigma )
\end{array}
\right) } \int_{0 \leq t_1 \leq ... \leq t_k \leq t} 
dB^{\sigma^{-1}(i_1)}_{t_1}  \cdots  
dB^{\sigma^{-1}(i_k)}_{t_k},\quad t \ge 0.
\]
In the above, for $\sigma \in \mathcal{S}_k$,  $e(\sigma)$ is the cardinality of the set
$
\{ j \in \{1,...,k-1 \} , \sigma (j) > \sigma(j+1) \}.
$
\end{itemize}

One should not expect a probability density function of $X_t$ with respect to the Lebesgue measure of the flat space $T_N(\mr^d)$.  Instead, one should expect a density of $X_t$ with respect to the Haar measure of $\hG_N(\mr^d)$. In order to prove the existence of such a density function, by reducing to local coordinate chats, it suffices to show that there exists a random variable $M$ with negative moment to any order such that
\begin{align}\label{existence 1}u^*\gamma_t(X)u\geq M||u||^2\end{align}
for all $u\in T_{X_t}(\hG_N(\mr^d)$, the tangent space of $\hG_N(\mr^d)$ at $X_t$.  Here
$$\gamma_{t}^{ij}(X)=\langle\D X_t^i, \D X_t^j\rangle_{\mathcal{H}}.$$
Although  (\ref{existence 1}) can be proved directly, it is sometimes more convenient to work on a flat space than a curved space when using Malliavin calculus. Hence we will  take another route in order to prove the existence of the density of $X_t$ and introduce the log-signature of the fractional Brownian motion.

It is well known that $\hG_N(\mr^d)$ is a free nilpotent  and simply connected Lie group whose Lie algebra inherits, from the grading of $T_N(\mathbb{R}^{d})$, a stratification
\[
\mathfrak{g}_N(\mathbb{R}^d)=\mathcal{V}_{1}\oplus \cdots \oplus \mathcal{V}_{N},
\]
with
\begin{equation*}
\dim \mathcal{V}_{j}= \frac{1}{j} \sum_{i \mid j} \mu (i)
d^{\frac{j}{i}}, \text{ } j \leq N,
\end{equation*}
 where $\mu$ is the M\"obius function. From the Hall-Witt theorem we can then construct a basis of $\mathfrak{g}_N(\mathbb{R}^d)$ which is adapted to the stratification
\[
\mathfrak{g}_N(\mathbb{R}^d)=\mathcal{V}_{1}\oplus \cdots \oplus \mathcal{V}_{N},
\]
and such that every element of this basis is an iterated bracket of the $e_i$'s.  Let $\mathcal{B}$ denote such a basis and for $x \in \mathfrak{g}_N(\mathbb{R}^d)$, let $[x]_\mathcal{B}\in \mathbb{R}^n$ be the coordinate vector of $x$ in the basis $\mathcal{B}$ where $n=\dim \mathfrak{g}_N(\mathbb{R}^d)$.

\begin{definition}
[Log-signature of the fractional Brownian motion]
Let $(B_t)_{t \ge 0}$ be a fractional Brownian motion with parameter $H>1/4$. The log-signature of order $N$ of $(B_t)_{t \ge 0}$ is the $\mathbb{R}^n$-valued process
\[
U_t=[\exp^{-1}( X_t)]_\mathcal{B} = \sum_{I, l(I) \le N } \Lambda_I (B)_t [e_{I}]_\mathcal{B}.
\] 
\end{definition}

From \cite{Bau}, $(U_t)_{t\ge0}$ solves in $\mathbb{R}^n$ a rough differential equation

\begin{align}\label{SDE u}U_t=\sum_{i=1}^d\int_0^tV_i(U_s)dB^i_s,\end{align}
where the vector fields $V_1,\cdots,V_d$ are polynomial and generate a Lie algebra isomorphic to $ \mathfrak{g}_N(\mathbb{R}^d)$. Actually $V_1,\cdots,V_d$ are left invariant vector fields for a polynomial group law $\star$ on $\mathbb{R}^n$ such that $(\mathbb{R}^n,\star)$ is isomorphic to $\mathbb{G}_N(\mathbb{R}^d)$. The stratification of $\mathbb{R}^n$ induced by this group law will be written

\begin{align}\label{strati}
\mathbb{R}^n=\mathcal{U}_{1}\oplus \cdots \oplus \mathcal{U}_{N}.
\end{align}
Since the exponential map $\mathfrak{g}_N(\mathbb{R}^d) \to  \hG_N(\mr^d)$ is a diffeomorphism, and because the Haar measure of $\hG_N(\mr^d)$ is induced  by the Lebesgue measure of $\mathfrak{g}_N(\mr^d)$ (through the exponential map), the existence of a smooth density of $U_t$ can easily translate to that of $X_t$. Hence, in what follows, we focus on proving the existence of a smooth density for $U_t$.

\begin{remark}
Note that components of $X_t$ are iterated integrals of $B$ to a certain order, hence $\|X_t\|$ has finite moments to any order.  Similarly $\|U_t\|$ also has finite moments to any order. 
\end{remark}

We end this subsection with a
global scaling property for the signature $X_t$ and the log-signature $U_t$. On $\mathfrak{g}_N(\mathbb R^d)$ we can
consider the family of linear operators $\delta_{\lambda}:\mathfrak{g}_N(\mathbb R^d)
\rightarrow \mathfrak{g}_N(\mathbb R^d)$, $\lambda \geq 0$ which act by scalar
multiplication $\lambda^{i}$ on $\mathcal{V}_{i} $. These operators are
Lie algebra automorphisms due to the grading. The maps $\delta_{\lambda}$
induce Lie group automorphisms $\Delta_{\lambda} :\mathbb{G}_N(\mathbb R^d) \rightarrow
\mathbb{G}_N(\mathbb R^d)$ which are called the canonical dilations of
$\mathbb{G}_N(\mathbb R^d)$. With an abuse of notation, we also denote $\Delta_\lambda$ the induced non-homogeneous dilation on $\mathbb{R}^n$, i.e. for $u \in \mathfrak{g}_N(\mathbb R^d)$,  $\Delta_\lambda [ u ]_\mathcal{B}=[\delta_{\lambda} u ]_\mathcal{B}$.

\begin{proposition}\cite[Theorem 3.4]{BC06}
Let $(\Delta_\lambda)_{\lambda \ge 0}$ be the one parameter family of dilations on $\mathbb{G}_N(\mathbb{R}^d)$. Then,
\begin{align}\label{global scaling}
	(X_{ct})_{t \geq 0} =^{law} (\Delta_{c^H} X_{t})_{t \geq 0}.
\end{align}
and 
\begin{align}\label{global scaling 2}
	(U_{ct})_{t \geq 0} =^{law} (\Delta_{c^H} U_{t})_{t \geq 0}.
\end{align}
\end{proposition}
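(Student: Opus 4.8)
The plan is to deduce both identities from the self-similarity of fractional Brownian motion, $(B_{ct})_{t\ge 0}\stackrel{law}{=}(c^{H}B_t)_{t\ge 0}$, combined with two elementary pathwise properties of the iterated integrals that build the signature: invariance under monotone reparametrization of the driving path, and homogeneity under its dilation. It suffices to establish \eqref{global scaling}; identity \eqref{global scaling 2} will then follow from it by applying the log map.

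For a path $\omega$ and $c>0$ write $\omega^{c}_t:=\omega_{ct}$. First I would record, for the rough-path iterated integrals, the identity obtained from the substitution $t_i=cs_i$, which carries the simplex $\Delta^k[0,ct]$ onto $\Delta^k[0,t]$:
\[
\int_{\Delta^k[0,ct]} dB^{\otimes k}=\int_{\Delta^k[0,t]} d(B^{c})^{\otimes k},
\]
and the scaling identity, valid because the order-$k$ iterated integral is homogeneous of degree $k$ in its integrator:
\[
\int_{\Delta^k[0,t]} d(\lambda B)^{\otimes k}=\lambda^{k}\int_{\Delta^k[0,t]} dB^{\otimes k},\qquad \lambda>0.
\]
Summing over $k$, the first identity gives $S_N(B)_{ct}=S_N(B^{c})_t$ and the second gives $S_N(c^{H}B)_t=\tilde\delta_{c^{H}}\,S_N(B)_t$, where $\tilde\delta_\lambda$ is the tensor dilation acting by $\lambda^{k}$ on $(\mathbb R^d)^{\otimes k}$. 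Since $\tilde\delta_\lambda$ is an algebra automorphism of $T_N(\mathbb R^d)$, it commutes with $\exp$, restricts to $\delta_\lambda$ on $\mathfrak g_N(\mathbb R^d)$, and hence agrees with $\Delta_\lambda$ on $\hG_N(\mathbb R^d)$; thus $S_N(c^{H}B)_t=\Delta_{c^{H}}S_N(B)_t$. Because $B^{c}\stackrel{law}{=}c^{H}B$ as $\mathbb R^d$-valued processes and the lift map $B\mapsto S_N(B)$ is measurable, I can push this equality of laws forward to obtain
\[
(S_N(B)_{ct})_{t\ge 0}=(S_N(B^{c})_t)_{t\ge 0}\stackrel{law}{=}(S_N(c^{H}B)_t)_{t\ge 0}=(\Delta_{c^{H}}S_N(B)_t)_{t\ge 0},
\]
which is \eqref{global scaling}. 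Alternatively, and perhaps more transparently, one may run exactly this argument on the Chen--Strichartz coefficients $\Lambda_I(B)_t$, obtaining $\Lambda_I(B)_{ct}\stackrel{law}{=}c^{l(I)H}\Lambda_I(B)_t$ jointly in $I$ and $t$, and then apply $\exp$ using that $\exp$ intertwines $\delta_{c^{H}}$ with $\Delta_{c^{H}}$.

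For \eqref{global scaling 2}, recall $U_t=[\exp^{-1}(X_t)]_\mathcal{B}$ and that $\exp$ intertwines the Lie algebra automorphism $\delta_{c^{H}}$ with the group automorphism $\Delta_{c^{H}}$, so $\exp^{-1}(\Delta_{c^{H}}X_t)=\delta_{c^{H}}\exp^{-1}(X_t)$; taking $\mathcal B$-coordinates and using the definition $\Delta_\lambda[u]_\mathcal{B}=[\delta_\lambda u]_\mathcal{B}$ turns \eqref{global scaling} into \eqref{global scaling 2}. The only genuinely delicate point in the whole argument is the first step: in the range $1/4<H\le 1/2$ the iterated integrals are not classical, so the reparametrization invariance and the scaling homogeneity must be justified at the level of the rough-path lift. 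This is standard: the canonical lift of $B$ is the rough-path limit of the lifts of its dyadic piecewise-linear (or mollified) approximations, for which both identities are immediate, and both operations are continuous in the rough-path metrics; everything else is bookkeeping with the grading of $\mathfrak g_N(\mathbb R^d)$.
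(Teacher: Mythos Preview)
The paper does not give its own proof of this proposition; it simply cites \cite[Theorem 3.4]{BC06}. Your argument is a correct self-contained proof and is in fact the natural one: self-similarity of $B$ plus the reparametrization invariance and degree-$k$ homogeneity of the iterated integrals, with the rough-path case handled by passing to the limit through piecewise-linear approximations. The identification of the tensor dilation $\tilde\delta_\lambda$ with $\Delta_\lambda$ on $\hG_N(\mathbb R^d)$ and the intertwining of $\exp$ with the dilations are exactly what is needed to pass from \eqref{global scaling} to \eqref{global scaling 2}. This is essentially how the cited result in \cite{BC06} is proved as well, so there is no meaningful divergence to comment on.
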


\subsection{ Carnot-Carath\'eodory distance} 

There is a canonical sub-Riemannian distance on any  Carnot group.
Let $\mathbb{G}$ be a Carnot group whose Lie algebra is stratified as
 \[
\mathfrak{g}=\mathcal{U}_{1}\oplus \cdots \oplus \mathcal{U}_{N}.
\]
and assume that $\mathcal{U}_{1}$ is equipped with an inner product. Using left invariance, the first layer $\mathcal{U}_{1}$ induces a left-invariant bracket generating sub-bundle (still denoted $\mathcal{U}_{1}$) in the tangent bundle of $\mathbb{G}$. Left-invariance also allows to define, from the inner product, a left invariant sub-Riemannian metric on $\mathcal{U}_{1}$ as follows. A $C^1$-curve $\gamma : [0,1] \to \mathbb{G} $ is called \textit{horizontal} if for every $t \in [0,1]$, $\gamma'(t) \in \mathcal{U}_{1}$. For $g_1,g_2 \in \mathbb{G}$, one defines the Carnot-Carath\'eodory distance as
\[
d(g_1,g_2)=\inf_{\mathcal{S}(g_1,g_2)} \int_0^1 \| \gamma'(t) \| dt,
\]
where $\mathcal{S}(g_1,g_2)$ is the set of $C^1$ horizontal curves $\gamma$ such that $\gamma(0)=g_1$, $\gamma(1)=g_2$.
For later use, we record the following well-known  properties of $d$:

\begin{proposition}
\

\begin{itemize}
\item For $g_1,g_2 \in \mathbb{G}$,
\[
d(g_1,g_2)=d(g_2,g_1)=d(0,g_1^{-1} g_2).
\]
\item Let $(\Delta_\lambda)_{\lambda \ge 0}$ be the one parameter family of dilations on $\mathbb{G}$. For $g_1,g_2 \in \mathbb{G}$, and $\lambda \ge 0$,
\[
d(\Delta_{\lambda} g_1,\Delta_{\lambda} g_2)=\lambda d(g_1,g_2).
\]
\end{itemize}
\end{proposition}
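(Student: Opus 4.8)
The final statement is a standard result, so my proof plan will recall the known arguments and indicate which earlier-stated facts they rely on.

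\medskip

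\textbf{Proof strategy.} Both claimed identities are classical properties of the Carnot--Carath\'eodory distance on a Carnot group $\mathbb{G}$, and they follow directly from the definition together with the group and dilation structure. The plan is to prove each bullet separately by a change-of-curve argument: given an admissible horizontal curve realizing (approximately) the infimum on one side, transform it by a left translation (respectively, a dilation) to obtain an admissible horizontal curve competing in the infimum on the other side, with the length either unchanged or scaled by the correct factor.

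\medskip

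\textbf{First bullet.} The symmetry $d(g_1,g_2)=d(g_2,g_1)$ follows by time-reversal: if $\gamma\in\mathcal{S}(g_1,g_2)$, then $\tilde\gamma(t):=\gamma(1-t)$ lies in $\mathcal{S}(g_2,g_1)$, and since $\tilde\gamma'(t)=-\gamma'(1-t)\in\mathcal{U}_1$ (a linear subspace) with $\|\tilde\gamma'(t)\|=\|\gamma'(1-t)\|$, the two length functionals agree; taking infima gives $d(g_1,g_2)=d(g_2,g_1)$. For the left-invariance identity $d(g_1,g_2)=d(0,g_1^{-1}g_2)$, I would use that left translation $L_{g}:h\mapsto g\star h$ is a diffeomorphism of $\mathbb{G}$ whose differential preserves the sub-bundle $\mathcal{U}_1$ (this is precisely the left-invariance built into the metric in the paragraph preceding the proposition) and is an isometry for the chosen inner product on $\mathcal{U}_1$. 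Hence $\gamma\mapsto L_{g_1^{-1}}\circ\gamma$ is a length-preserving bijection between $\mathcal{S}(g_1,g_2)$ and $\mathcal{S}(0,g_1^{-1}g_2)$: if $\gamma(0)=g_1$ and $\gamma(1)=g_2$ then $(L_{g_1^{-1}}\gamma)(0)=g_1^{-1}\star g_1=0$ and $(L_{g_1^{-1}}\gamma)(1)=g_1^{-1}\star g_2$, while horizontality and the length are preserved because $dL_{g_1^{-1}}$ maps $\mathcal{U}_1$ isometrically to $\mathcal{U}_1$. Taking the infimum over the two sides yields the equality. (Here $0$ denotes the group identity $\mathbf 1$.)

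\medskip

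\textbf{Second bullet.} For the dilation identity, recall that $\Delta_\lambda:\mathbb{G}\to\mathbb{G}$ is a Lie group automorphism whose differential acts on the first layer $\mathcal{U}_1$ by scalar multiplication by $\lambda$ (since $\delta_\lambda$ acts by $\lambda^i$ on $\mathcal{U}_i$, in particular by $\lambda$ on $\mathcal{U}_1$). Therefore, for $\gamma\in\mathcal{S}(g_1,g_2)$, the curve $\Delta_\lambda\circ\gamma$ satisfies $(\Delta_\lambda\gamma)(0)=\Delta_\lambda g_1$, $(\Delta_\lambda\gamma)(1)=\Delta_\lambda g_2$, is horizontal because $(\Delta_\lambda\gamma)'(t)=d\Delta_\lambda(\gamma'(t))\in\mathcal{U}_1$, and has $\|(\Delta_\lambda\gamma)'(t)\|=\lambda\|\gamma'(t)\|$, so its length is $\lambda$ times that of $\gamma$. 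This shows $\mathcal{S}(g_1,g_2)\to\mathcal{S}(\Delta_\lambda g_1,\Delta_\lambda g_2)$, $\gamma\mapsto\Delta_\lambda\gamma$, is a bijection (for $\lambda>0$, with inverse $\Delta_{\lambda^{-1}}$) scaling length by $\lambda$; taking infima gives $d(\Delta_\lambda g_1,\Delta_\lambda g_2)=\lambda\, d(g_1,g_2)$. The case $\lambda=0$ is trivial since $\Delta_0$ collapses everything to the identity and both sides vanish.

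\medskip

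\textbf{Main obstacle.} There is no genuine obstacle here; the only point requiring a little care is the bookkeeping that the relevant maps ($L_g$, $\Delta_\lambda$, time-reversal) send horizontal curves to horizontal curves and act isometrically (resp. by the scalar $\lambda$) on the velocities — this is exactly where left-invariance of $\mathcal{U}_1$ and the defining property of the dilations enter, and it is what makes the bijections between the curve classes $\mathcal{S}(\cdot,\cdot)$ length-preserving up to the stated factor. Since the statement is flagged as well known, one may alternatively simply cite a standard reference (e.g.\ the monograph literature on Carnot groups) in lieu of the argument above.
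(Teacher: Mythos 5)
The paper states this proposition without proof, labeling it ``well-known,'' so there is no argument of the authors' to compare against. Your proof is correct and is exactly the standard change-of-curve argument one would give: time-reversal for symmetry, left-translation (using that the sub-bundle and metric on $\mathcal{U}_1$ are left-invariant by construction) for $d(g_1,g_2)=d(0,g_1^{-1}g_2)$, and push-forward by $\Delta_\lambda$ (using that $d\Delta_\lambda$ scales $\mathcal{U}_1$ by $\lambda$) for the homogeneity, with the infima transported by the induced bijections on $\mathcal{S}(\cdot,\cdot)$. The only cosmetic point worth noting is that the paper's $\mathcal{S}(g_1,g_2)$ is defined as the set of $C^1$ horizontal curves, and all three of your transformations manifestly preserve the $C^1$ class, so no regularity issue arises; your handling of the degenerate case $\lambda=0$ is also appropriate.
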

%
%
The Carnot-Carath\'eodory distance is pretty difficult to explicitly compute in general. It is often much more convenient to estimate using  homogeneous norms.

\begin{definition}
 A homogeneous norm on $\mathbb{G}$ is a continuous function $\parallel \cdot \parallel : \mathbb{G} \rightarrow
[0,+\infty) $, such that:
\begin{enumerate}
\item $\parallel \Delta_{\lambda} g \parallel=\lambda \parallel g \parallel$, $\lambda \ge 0 $, $g \in \mathbb{G}$;
\item $\parallel g \parallel=0$ if and only if $g=\bf{1}$, the group identity of $\mathbb{G}$.
\end{enumerate}
\end{definition}


It turns out that the Carnot-Carath\'eodory distance  is equivalent to any homogeneous norm in the following sense:

\begin{theorem}
Let $\parallel \cdot \parallel $ be a homogeneous norm on $\mathbb{G}$. There exist two positive constants $C_1$ and $C_2$ such that for every $g_1,g_2 \in \mathbb{G}$,
\[
A \| g_1^{-1}g_2 \| \le d(g_1,g_2) \le B \|g_1^{-1}g_2 \|.
\]
\end{theorem}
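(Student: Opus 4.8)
The plan is to reduce the two-point estimate to a one-point estimate, and then to recognize $g \mapsto d(\mathbf{1},g)$ as a homogeneous norm, so that the theorem becomes a special case of the assertion that any two homogeneous norms on $\mathbb{G}$ are equivalent. For the reduction, I would invoke the properties of $d$ recorded above: $d(g_1,g_2)=d(\mathbf{1}, g_1^{-1}g_2)$, while $\|g_1^{-1}g_2\|$ depends only on $g_1^{-1}g_2$; hence it suffices to produce $A,B>0$ with $A\|g\|\le d(\mathbf{1},g)\le B\|g\|$ for every $g\in\mathbb{G}$.

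Next I would prove the auxiliary equivalence of homogeneous norms by a compactness argument. Fix a concrete reference homogeneous norm $\|\cdot\|_\star$: in exponential coordinates adapted to the stratification $\mathfrak{g}=\mathcal{U}_1\oplus\cdots\oplus\mathcal{U}_N$, set $\|\exp(x_1+\cdots+x_N)\|_\star=\sum_{j=1}^N|x_j|^{1/j}$ with $x_j\in\mathcal{U}_j$ and $|\cdot|$ a Euclidean norm on each layer. Since $\delta_\lambda$ acts by $\lambda^j$ on $\mathcal{U}_j$, this $\|\cdot\|_\star$ is continuous, positively homogeneous of degree $1$, vanishes only at $\mathbf{1}$, and its unit sphere $\Sigma=\{\|\cdot\|_\star=1\}$ is closed and bounded in coordinates, hence a compact subset of $\mathbb{G}$. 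If $\|\cdot\|$ is any homogeneous norm, it is continuous and strictly positive on $\Sigma$, so it attains there a minimum $m>0$ and a maximum $M<\infty$; writing $g\ne\mathbf{1}$ as $g=\Delta_{\|g\|_\star}(g')$ with $g'=\Delta_{1/\|g\|_\star}g\in\Sigma$ and using homogeneity of both norms gives $m\|g\|_\star\le\|g\|\le M\|g\|_\star$, the case $g=\mathbf{1}$ being trivial. In particular all sublevel sets $\{\|\cdot\|\le r\}$ of homogeneous norms are compact.

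Then I would check that $N(g):=d(\mathbf{1},g)$ is itself a homogeneous norm: finiteness for all $g$ follows from the Chow--Rashevskii theorem, since the first layer $\mathcal{U}_1$ is bracket generating; $N(\Delta_\lambda g)=\lambda N(g)$ is precisely the dilation property of $d$ recorded above; $N(g)=0$ iff $g=\mathbf{1}$ because $d$ is a genuine distance; and $N$ is continuous because on a Carnot group the Carnot--Carath\'eodory distance induces the manifold topology. Applying the equivalence of homogeneous norms to $N$ and to $\|\cdot\|$ produces constants $A,B$ with $A\|g\|\le d(\mathbf{1},g)\le B\|g\|$, which combined with the first step proves the theorem (with $C_1=A$ and $C_2=B$).

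The only genuinely non-elementary ingredient is the last step's claim that $g\mapsto d(\mathbf{1},g)$ is finite and continuous on all of $\mathbb{G}$; this is where the bracket-generating hypothesis enters, through the Chow--Rashevskii connectivity theorem together with the ball-box estimates. Everything else is compactness of dilation-invariant sets plus bookkeeping with the dilations $\Delta_\lambda$. If one preferred to avoid quoting continuity of $d$, an alternative is to prove the two bounds directly: the upper bound $d(\mathbf{1},g)\lesssim\|g\|$ by constructing short horizontal curves near the identity (again relying on Chow--Rashevskii), and the lower bound by projecting a horizontal curve to the first layer and invoking the ball-box inclusion; the homogeneous-norm route above is, however, shorter.
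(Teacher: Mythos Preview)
Your argument is correct and is the standard proof of this well-known fact. Note, however, that the paper does not actually prove this theorem: it is stated in the preliminaries section as a known result (``It turns out that the Carnot--Carath\'eodory distance is equivalent to any homogeneous norm in the following sense''), without proof or attribution, and is used freely thereafter. So there is no paper proof to compare against.

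That said, your write-up is a faithful rendering of the classical argument: reduce to the identity via left-invariance, observe that $g\mapsto d(\mathbf{1},g)$ is a homogeneous norm (finiteness from Chow--Rashevskii, homogeneity from the dilation property of $d$, continuity from the fact that the Carnot--Carath\'eodory distance induces the manifold topology), and then use compactness of the unit sphere of a reference homogeneous norm to show any two such norms are equivalent. The only point to flag is that continuity of $N(g)=d(\mathbf{1},g)$ is slightly more than ``$d$ is a genuine distance'': it requires the topological equivalence of the sub-Riemannian and manifold topologies, which you correctly identify as the nontrivial ingredient (and which itself follows from ball-box estimates). Everything else is routine.
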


If $\mathbb{G}$ is a Carnot group with Carnot-Carath\'eodory distance $d$ and identity element $\mathbf{1}$, the C-C norm of $g \in \mathbb{G}$ will be defined by $\| g \|_{CC}=d(\mathbf{1},g)$.

\section{Existence of density}

In this section, we prove that  the log-signature $U_t$, $t>0$ has a smooth density with respect to the Lebesgue measure. Consider in $\mathbb{R}^n$ the system

\begin{align}\label{SDE u2}U^x_t=x+\sum_{i=1}^d\int_0^tV_i(U_s)dB^i_s.\end{align}
In particular, $U^0_t=U_t$. 

\begin{lemma}\label{eigenvalue integrability general}
Let $C$ be a symmetric nonnegative definite $m\times m$ random  matrix.  Assume that the entries $C_{i,j}$ have moments of all orders. Then the largest eigenvalue of $C$ has moments of all orders.
\end{lemma}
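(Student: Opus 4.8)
The plan is to dominate the largest eigenvalue by a deterministic polynomial in the entries of $C$ and then invoke the moment hypothesis together with an elementary convexity inequality.

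First I would record that, since $C$ is symmetric, its largest eigenvalue $\lambda_{\max}(C)$ depends continuously (hence Borel measurably) on the entries $(C_{ij})$, so $\lambda_{\max}(C)$ is a genuine random variable; alternatively one may write $\lambda_{\max}(C)=\sup_{\|u\|=1}u^\top C u$ as a supremum over a countable dense set of unit vectors of the measurable maps $u^\top C u$. Next, because $C$ is nonnegative definite, all its eigenvalues $\lambda_1,\dots,\lambda_m$ are nonnegative, and therefore
\[
0\le \lambda_{\max}(C)\le \sum_{k=1}^m \lambda_k=\mathrm{tr}(C)=\sum_{i=1}^m C_{ii}.
\]
Any comparable elementary bound would do just as well — e.g.\ $\lambda_{\max}(C)\le \|C\|_F=(\sum_{i,j}C_{ij}^2)^{1/2}$, or Gershgorin's theorem $\lambda_{\max}(C)\le \max_i\sum_j|C_{ij}|$ — but the trace bound is the cleanest here.

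Finally, fix $p\ge 1$. Combining the previous display with the power-mean (Jensen) inequality $\big(\frac1m\sum_i a_i\big)^p\le \frac1m\sum_i a_i^p$ for $a_i\ge 0$, we get
\[
\mathbb{E}\big[\lambda_{\max}(C)^p\big]\le \mathbb{E}\Big[\Big(\sum_{i=1}^m C_{ii}\Big)^p\Big]\le m^{p-1}\sum_{i=1}^m \mathbb{E}\big[C_{ii}^p\big],
\]
and the right-hand side is finite because each diagonal entry $C_{ii}$ has moments of all orders by hypothesis. Since $p\ge 1$ was arbitrary, $\lambda_{\max}(C)$ has moments of all orders. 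I do not anticipate any real obstacle in this argument; the only point deserving a word of care is the measurability of $\lambda_{\max}(C)$, which is handled as indicated above.
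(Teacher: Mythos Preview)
Your argument is correct and considerably more direct than the paper's. The key observation you exploit is that nonnegative definiteness forces all eigenvalues to be nonnegative, so the largest one is dominated by the trace, which is just the sum of the diagonal entries; finiteness of moments then follows immediately from the moment hypothesis on the entries (with the power-mean step legitimate because each $C_{ii}=e_i^\top C e_i\ge 0$).

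The paper instead argues via tail probabilities: it bounds $\mathbb{P}\{\lambda>x\}$ by splitting on the event $\{|C|\le x/8\}$, where $|C|$ is the Frobenius norm, and uses a perturbation estimate $|v^\top C v - v_0^\top C v_0|\le 2|C|\,|v-v_0|$ to handle the supremum over the unit sphere, ultimately reducing everything to moments of $|C|$. This route works but is noticeably heavier; your trace bound bypasses the sphere-supremum issue entirely and uses only the diagonal entries rather than the full Frobenius norm. The paper's approach would generalize to situations where one only controls $\|C\|_F$ rather than individual entries, but under the stated hypotheses your proof is the cleaner one.
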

\begin{proof}
Let $\lambda$ be the largest eigenvalue of $C$ and set $|C|=(\sum_{i,j=1}^m C_{i,j}^2)^{\frac{1}{2}}$. we have
\begin{align}
\mathbb{P}\{\lambda>x\}&=\mathbb{P}\{\sup_{|v|=1} v^TCv> x\}\nonumber\\
&=\mathbb{P}\{\sup_{|v|=1} v^TCv> x, |C|\leq x/8\}+\mathbb{P}\{|C|>x/8\}.\label{probability lambda}
\end{align}
Fix any $v_0$ with $|v_0|=1$, we have $|v^TCv-v_0^TCv_0|\leq 2|C||v-v_0|$.  Hence when $|C|<x/8$,
\begin{align*}
v_0^TCv_0\geq v^TCv-2|C||v-v_0|\geq x-\frac{x}{2}=\frac{x}{2}.
\end{align*}
Thus the probability in (\ref{probability lambda}) is bounded by
\begin{align*}
\mathbb{P}\{v_0^TCv_0\geq x/2\}+\mathbb{P}\{|C|>x/8\}\leq \frac{2^p \me|C|^p}{x^P}+\frac{8^p\me|C|^p}{x^p}.
\end{align*}
Clearly this implies that $\lambda$ has finite moments to any order.
\end{proof}

\begin{proposition}\label{integrality JJ}
Consider the $n \times n$ Jacobian matrix $\mathbf{J}_t=\frac{\partial U_t^x}{\partial x} $. The largest eigenvalue of $$(\J^*_t \J_t)^{-1}$$ has finite moments to any order.
\end{proposition}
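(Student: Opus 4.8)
The plan is to exploit the left-invariance of the vector fields $V_1,\dots,V_d$, which turns the flow map $x\mapsto U_t^x$ into a group translation and thereby makes $\mathbf{J}_t^{-1}$ an \emph{explicit} polynomial expression in the log-signature $U_t$; the desired integrability then follows from the fact that $\|U_t\|$ has finite moments of all orders, together with Lemma~\ref{eigenvalue integrability general}.

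First I would reduce matters to controlling $\mathbf{J}_t^{-1}$. The matrix $(\mathbf{J}_t^*\mathbf{J}_t)^{-1}=\mathbf{J}_t^{-1}(\mathbf{J}_t^{-1})^*$ is symmetric and nonnegative definite, its largest eigenvalue is the square of the operator norm of $\mathbf{J}_t^{-1}$, and each of its entries is a polynomial in the entries of $\mathbf{J}_t^{-1}$. Hence, by Lemma~\ref{eigenvalue integrability general}, it suffices to prove that every entry of $\mathbf{J}_t^{-1}$ has finite moments of all orders.

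Next comes the structural step. By \eqref{SDE u}, the $V_i$ are left-invariant vector fields for the polynomial group law $\star$ on $\mathbb{R}^n$; write $U_t=U_t^0$ for the solution of \eqref{SDE u2} started at the identity $0$. For arbitrary $x$, left-invariance yields $d(x\star U_t)=\sum_{i=1}^d V_i(x\star U_t)\,dB_t^i$ with $x\star U_0=x$, so uniqueness of solutions to \eqref{SDE u2} gives the flow identity $U_t^x=x\star U_t$. Consequently $x\mapsto U_t^x$ is the right translation $R_{U_t}(y)=y\star U_t$, and $\mathbf{J}_t=(dR_{U_t})_x$. Since $R_{U_t}$ is a diffeomorphism of $\mathbb{R}^n$ with inverse $R_{U_t^{-1}}$, it follows that $\mathbf{J}_t$ is invertible for every realization of $B$, and differentiating $R_{U_t^{-1}}\circ R_{U_t}=\mathrm{id}$ at $y=x$ gives $\mathbf{J}_t^{-1}=(dR_{U_t^{-1}})_{x\star U_t}$. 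Because $(\mathbb{R}^n,\star)$ is nilpotent, both $\star$ and the inversion map are polynomial of bounded degree (in the exponential coordinates of the first kind used here the inversion is simply $v\mapsto -v$), so $(y,z)\mapsto\partial_y(y\star z)$ is a matrix-valued polynomial. Plugging in, every entry of $\mathbf{J}_t^{-1}$ is a fixed polynomial in the coordinates of $x$ and of $U_t$. Since $\|U_t\|$ has finite moments of all orders, so does every entry of $\mathbf{J}_t^{-1}$, and hence of $(\mathbf{J}_t^*\mathbf{J}_t)^{-1}$; Lemma~\ref{eigenvalue integrability general} then finishes the proof.

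I expect the only real subtlety to be the justification, at the level of rough path theory, of the flow identity $U_t^x=x\star U_t$ and of the identification of $\mathbf{J}_t$ with $(dR_{U_t})_x$ — that is, that the pathwise solution map of \eqref{SDE u2} is differentiable in the initial condition and transforms as expected under the smooth (polynomial) map $\star$. Everything else is elementary, the point being that the polynomial, hence a priori problematic, nature of the vector fields $V_i$ is exactly compensated by the explicit polynomial dependence of $\mathbf{J}_t^{-1}$ on $U_t$, whose moments are already under control — in contrast to the generic situation, where the best available bound on the Jacobian of an RDE flow is of "exponential-of-the-driver" type and requires $C_b^\infty$ vector fields.
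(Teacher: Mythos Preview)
Your proposal is correct and follows essentially the same approach as the paper: both exploit the left-invariance of the $V_i$'s to write $U_t^x=x\star U_t$, identify $\mathbf{J}_t$ and $\mathbf{J}_t^{-1}$ as differentials of the polynomial right translations $R_{U_t}$ and $R_{U_t^{-1}}$, and then conclude via Lemma~\ref{eigenvalue integrability general} using that $U_t$ (and $U_t^{-1}$) are polynomial in iterated integrals of $B$ and hence have moments of all orders. Your write-up is slightly more detailed in spelling out the reduction to $\mathbf{J}_t^{-1}$ alone (which suffices since $(\mathbf{J}_t^*\mathbf{J}_t)^{-1}=\mathbf{J}_t^{-1}(\mathbf{J}_t^{-1})^*$), but the underlying argument is the same.
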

\begin{proof}
From Lemma \ref{eigenvalue integrability general}, it is enough to prove that entries of both $\J_t$ and $\J_t^{-1}$ have finite moments to any order.
The stochastic differential equation

\begin{align*}U^x_t=x+\sum_{i=1}^d\int_0^tV_i(U_s)dB^i_s\end{align*}
can be integrated as
\[
U^x_t = x \star U_t
\]
where $\star$ is the polynomial group law on $\mathbb{R}^n$ introduced before  such that $(\mathbb{R}^n,\star)$ is isomorphic to $\mathbb{G}_N(\mathbb{R}^d)$. Since the inverse of the map $ x \to  x \star U_t$ is clearly $ x \to x \star (U_t)^{-1}$ the conclusion follows from the fact that both $U_t$ and $U_t^{-1}$ have finite moments to any order since they are linear combinations of iterated integrals of the fractional Brownian motion.
\end{proof}

Now we are ready to state and prove our main result in this section.
\begin{theorem}\label{integrability small e-v}
Let $\gamma_t(U^x)=\langle \D U_t^x, \D U_t^x\rangle_\mathcal{H}$ be the Malliavin matrix of $U_t^x$, and $\lambda_t$ the smallest eigenvalue of $\gamma_t(U^x)$. Then
\begin{align}\label{M matrix of u}\sup_{t\in(0,1]}\left\|\frac{t^{2HN}}{\lambda_t}\right\|_p<\infty,\end{align}
for all $p\geq 1$. In particular, $U_t^x$ admits a smooth density with respect to the Lebesgue measure of $\mathbb{R}^n$.
\end{theorem}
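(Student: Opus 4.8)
The plan is to establish the lower bound $\lambda_t \ge c\, t^{2HN}$ (up to a random constant with negative moments of all orders) by a Norris-type argument adapted to the rough path setting, exploiting the hypoelliptic structure of the vector fields $V_1,\dots,V_d$ generating $\mathfrak{g}_N(\mathbb{R}^d)$, together with the scaling property \eqref{global scaling 2}. The smoothness of the density then follows from the standard criterion quoted at the end of Section 2.1, once we know $(\det \gamma_t(U^x))^{-1} \in \cap_p L^p$, which is implied by \eqref{M matrix of u} since $\det\gamma_t \ge \lambda_t^n$ and $\gamma_t$ itself has entries with moments of all orders (the entries of $\D U_t^x$ do, because $U_t^x$ is built from iterated integrals of $B$).

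First I would write the reduced Malliavin covariance. Since $U_t^x = x \star U_t$, one has $\D_r U_t^x = \mathbf{J}_t\, \mathbf{J}_r^{-1} \sum_i V_i(U_r)\,\mathbf{1}_{[0,t]}(r)\,e_i$ in the appropriate sense (this is the analogue of the variation-of-constants formula in the rough path theory of Cass--Litterer--Lyons / Cass--Hairer--Litterer--Tindel), so that
\[
u^* \gamma_t(U^x) u = \big\langle \mathbf{J}_t^{*} u \cdot \textstyle\sum_i V_i(U_\cdot)\langle \cdot, e_i\rangle,\ \mathbf{J}_t^{*} u \cdot \textstyle\sum_i V_i(U_\cdot)\langle \cdot, e_i\rangle \big\rangle_{\mathcal H}.
\]
By Proposition \ref{integrality JJ}, the largest eigenvalue of $(\mathbf{J}_t^*\mathbf{J}_t)^{-1}$ has all moments, so it suffices to bound from below the "reduced" covariance matrix $C_t = \langle M_\cdot, M_\cdot\rangle_{\mathcal H}$ where $M_\cdot = \sum_i V_i(U_\cdot)\langle \cdot, e_i\rangle$, i.e.\ to show $\inf_{|v|=1} v^* C_t v \ge c\, t^{2HN}$ with the random constant having negative moments of all orders. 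Here the $C^\infty$-boundedness hypothesis used in \cite{BH, CHLT} fails, so the key point is that the vector fields $V_i$ are polynomial: I would control them on the (random) set where $\sup_{s\le t}\|U_s\|$ is of moderate size, using that $\|U_t\|$ and the $\alpha$-Hölder norm of $B$ have Gaussian-type or stretched-exponential tails, and that on the bad set the loss is compensated because the integrability statements are uniform in $p$.

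For the quantitative lower bound itself, I would run the iterated Norris lemma / Kusuoka--Stroock scheme: a lower bound $v^* C_t v \ge \varepsilon$ forces $\sum_i \langle v, V_i(U_\cdot)\rangle$ to be small in $L^2(\mathcal H)$ on a subinterval, hence (by the fractional Norris lemma of Cass--Hairer--Litterer--Tindel or Hairer--Pillai, controlling the rough integral against its drift) the one-forms $t \mapsto \langle v, V_i(U_t)\rangle$ are uniformly small, and then — differentiating along the rough differential equation \eqref{SDE u} and using the bracket-generating condition, which after $N$ steps of bracketing spans $\mathbb{R}^n$ — one deduces $\langle v, W\rangle$ is small for every $W$ in a spanning family, contradicting $|v|=1$; the bookkeeping of the powers of the small parameter through $N$ levels of brackets produces exactly the exponent $2HN$. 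Finally, rather than redoing this at arbitrary small $t$, I would prove it at $t=1$ and then transfer to general $t\in(0,1]$ via \eqref{global scaling 2}: under $U_{ct} \stackrel{law}{=} \Delta_{c^H} U_t$ the Malliavin matrix rescales so that its smallest eigenvalue at time $t$ is comparable to $t^{2HN}$ times the smallest eigenvalue of a fixed law's dilated covariance, and the $\delta_\lambda$ acts by $\lambda^i$ on $\mathcal{V}_i$ with $i$ ranging up to $N$, which is where the worst power $t^{2HN}$ enters. The main obstacle I anticipate is precisely the unboundedness of the $V_i$: making the Norris-type estimates quantitative and uniform in $t$ while the vector fields grow polynomially requires carefully pairing each "bad event" (large path, poor Hölder norm of $B$) with a Borel--Cantelli / Chebyshev tail bound strong enough to survive multiplication by the polynomial growth, and ensuring all constants are deterministic or have negative moments of every order.
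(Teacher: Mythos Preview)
Your overall architecture is sound --- factor out the Jacobian via Proposition~\ref{integrality JJ}, bound the reduced covariance from below at $t=1$, and then use the dilation scaling \eqref{global scaling 2} to recover the $t^{2HN}$ rate --- and this matches the paper's structure. The difference is in how the reduced-covariance lower bound is obtained. You propose to rerun a Norris-type / Kusuoka--Stroock iteration directly, and you correctly flag the polynomial growth of the $V_i$'s as the main obstacle, planning to handle it by probabilistic localization on good events. That is a workable but laborious route, and the ``bad-event'' bookkeeping you describe is genuinely delicate.

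The paper avoids this obstacle altogether by a structural observation you are missing. It imports the decomposition from \cite{BOZ15,BOZ16}: writing $(\mathbf{J}_t^\epsilon)^{-1}V^\epsilon_{[I]}(U^{x,\epsilon}_t)=\sum_{J}\beta_{I}^{J,\epsilon}(t,x)V^\epsilon_{[J]}(x)$ and setting $M^\epsilon_{I,J}=\langle\beta^{I,\epsilon},\beta^{J,\epsilon}\rangle_{\mathcal H}$, one has
\[
\lambda_{\min}(\gamma_1(U^{x,\epsilon}))\;\ge\;C\,\epsilon^{2N}\,\lambda_{\min}(M^\epsilon_{I,J}(x))\,\lambda_{\min}\big((\mathbf{J}_1^\epsilon)^*\mathbf{J}_1^\epsilon\big).
\]
The crucial point is that the integrability of $\lambda_{\min}(M^\epsilon_{I,J})^{-1}$ in \cite{BOZ16} does \emph{not} require the $V_i$'s themselves to be bounded; it only requires that the structure functions $\omega_I^J$ in $V_I(x)=\sum_J \omega_I^J(x)\,V_{[J]}(x)$ be $C^\infty$-bounded, since the SDE governing $\beta_I^{J,\epsilon}$ involves only the $\omega_I^J$'s. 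Because the $V_i$'s here are left-invariant on a nilpotent Lie group, one can take $\omega_I^J=\delta_I^J$ for $|I|\le N$ and $\omega_I^J=0$ for $|I|>N$ --- constants, hence trivially $C^\infty$-bounded. So the existing machinery applies verbatim, and no ad hoc control of polynomial growth is needed. This is the key simplification your proposal lacks: rather than fighting the unboundedness with tail estimates, the paper recognizes that the nilpotent structure makes the relevant auxiliary data constant.
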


\begin{proof} 
If the vector fields $V_i$'s were $C^\infty$-bounded, Lemma 3.9 in \cite{BOZ15} can be easily translated to the estimate claimed here, due to the self-similarity of the fractional Brownian motion.
In what follows, we show that the conclusion of the theorem is still true for  $U_t^x$, even though the vector fields $V_i$'s are not bounded ($V_i$'s are of polynomial growth).  

For $\epsilon\in(0,1]$ and $V_i^\epsilon=\epsilon V_i$, consider the following family of SDEs,
$$dU^{x,\epsilon}_t=\sum_{i=1}^dV_i^\epsilon(U^{x,\epsilon}_t)dB_t^i, \quad U^{x,\epsilon}_0=x\in \mathbb{R}^n.$$ 
Let $\J^{\epsilon}_t$ be the Jacobian of $U^{x,\epsilon}_t$ and $\beta_I^{J,\epsilon}(t,x)$ be such that 
$$(\mathbf{J}_t^\epsilon)^{-1}V^\epsilon_{[I]}(U^{x,\epsilon}_t)=\sum_{J\in\mathcal{A}_1(N)}\beta_{I}^{J,\epsilon}(t,x)V^\epsilon_{[J]}(x).$$
Then for any $I,J\in\mathcal{A}_1(N)$,  let $\beta^{I,\epsilon}(\cdot,x)$ be the column vector $(\beta^{I}_i(\cdot, x))_{i=1,...,d}$ and defined
$$M_{I,J}^\epsilon(x)=\langle\beta^{I}(\cdot,x), \beta^{J}(\cdot, x)\rangle_{\mathcal{H}},$$
which is considered here as a (symmetric) matrix indexed by $I, J\in\mathcal{A}_1(N)$.
Denote by \begin{align*}
\gamma_t(U^{x,\epsilon}))&=\langle\D U^{x,\epsilon}_t, \D U^{x,\epsilon}_t\rangle_{\mathcal{H}}
\end{align*}
the Malliavin matrix of $U^{x,\epsilon}_t$.  It has been  shown in \cite{BOZ15} that for come constant $C$ not depending on $\epsilon$,
$$\lambda_{\min}(\gamma_1(U^{x,\epsilon}))\geq C\epsilon^{2N}\lambda_{\min}(M^\epsilon_{I,J}(x))\lambda_{\min}((\J^\epsilon_1)^*\J^\epsilon_1).$$
Here  $\lambda_{\min}$ stands for the smallest eigenvalue of the corresponding matrices. Hence the claimed result relies on the integrability of $\lambda^{-1}_{\min}(M^\epsilon_{I,J}(x))$ and $\lambda^{-1}_{\min}((\J^\epsilon_1)^*\J^\epsilon_1).$  The integrability of $\lambda^{-1}_{\min}((\J^\epsilon_1)^*\J^\epsilon_1)$ has been taken care of in Proposition \ref{integrality JJ}.  In what follows, we justify that $\lambda^{-1}_{\min}(M^\epsilon_{I,J}(x))$ also has finite moments to any order even though the vector fields $V_i$'s are not bounded.

Note that  Assumption 3.1 is assumed in \cite{BOZ16} in order to prove that the largest eigenvalue of $(M^\epsilon_{I,J})^{-1}$ has finite moments to any order (uniformly in $\epsilon\in[0,1]$). The key in the assumption is that one can find functions $\omega^J_I$ such that $V_{I}(x)=\sum_{J\in\mathcal{A}(l)}\omega_I^J(x)V_{[J]}(x)$, and most importantly $\omega^J_I$'s are $C^\infty$-bounded. Indeed, all the later argument in \cite{BOZ16} are based on the fact that $\omega^J_I$ are $C^\infty$-bounded and that $\beta_J^{I,\epsilon}(t,x)$ are defined by SDEs that only involve $\omega^J_I$'s (see \cite[Equation 3.5]{BOZ16}).  It does not rely on the boundedness of the vector fields $V_i$'s themselves.  

In our present situation, since $V_i$'s are nilpotent, we can simply take
$\omega_I^J=\delta^J_I,$ if $|I|\leq N$; and $\omega_I^J=0$ if $|I|>N$. For this particular choice of $\omega^J_I$, they are clearly $C^\infty$-bounded. Hence we can conclude that in our present situation, the largest eigenvalue of $(M_{I,J}^\epsilon)^{-1}$ still has finite moments to any order and uniform in $\epsilon\in[0,1]$. The proof is thus completed.
\end{proof}

\section{Upper bound of the density}



This section is devoted to prove a sharp upper bound for the density $p_t(u)$ of $U_t\in \mathbb{R}^n$.    
In order to establish an upper bound for $p_t(u)$, we aim to use the following general bound borrowed from inequality (24) of \cite{BNOT16} (also see inequality (21) of \cite{BOT14}).
\begin{align}\label{formula density}
p_t(u)\le c\mathbb P(U_t\geq u)^{1/2}\|\gamma_t(U)^{-1}\|^{n}_{n,2^{n+2}}\|\D U_t\|^n_{n,2^{n+2}}\end{align}
In the above,  $U_t\ge u$ means that the inequality holds component-wise. Without loss of generality, we may assume $u^i\geq 0$ for $1\leq i\leq n$. [An argument explaining why we can assume all the coordinate of $u$ are positive can be found in the proof of Theorem 3.13 in \cite{BOT14}.]

According to the general bound in \eqref{formula density}, in order to obtain an upper bound for the density of $U_t$, we need to estimate (1) the tail probability of $U_t$,  (2) the Malliavin derivatives of $U_t$, and (3) the Malliavin matrix of $U_t$.

\medskip
\noindent{\bf Controlling the tail probability.}\ \  In order to better characterize the event $\{U_t\geq u\}$, let us introduce an homogeneous norm on $\mathbb{R}^n$, namely, set
\begin{align*}
\vertiii{u}\triangleq\max_{i=1,...,n}|u^i|^{q_i},
\end{align*}
where $q_i=1/k$ if $u_i\in\mathcal{U}_k$. By the equivalence of homogeneous norms on $\mathbb{R}^n$, there exists constants $C_1, C_2>0$ such that 
\begin{align}\label{equivalence of homo norm on g}
C_1\|u\|_{\textsc{CC}}\leq  \vertiii{u}\leq C_2\|u\|_{\textsc{CC}},\quad \mathrm{for\ all}\ u\in\mathbb{R}^n.
\end{align}
Now we are ready to state and prove the following lemma concerning the tail probability of $U_t$.
\begin{lemma}\label{lem: tail estimate}There exists a constant $C>0$ such that 
	\begin{equation}\label{tail estimate}
		\mp(U_t\ge u)\leq Ce^{-\frac{\|u\|^2_{\textsc{CC}}}{Ct^{2H}}},
	\end{equation}
	for all $u\in\mathbb{R}^n$.
\end{lemma}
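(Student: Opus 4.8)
The plan is to exploit the global scaling property \eqref{global scaling 2}, $(U_{ct})_{t\ge0}=^{\rm law}(\Delta_{c^H}U_t)_{t\ge0}$, to reduce everything to the time $t=1$, and then to control the tail of $U_1$ componentwise using the explicit Chen--Strichartz representation together with Gaussian/rough-path integrability. First I would observe that since $\Delta_\lambda$ acts by scalar multiplication $\lambda^k$ on the layer $\mathcal U_k$, the homogeneous norm satisfies $\vertiii{\Delta_\lambda u}=\lambda\vertiii{u}$, hence taking $c=t^{1/1}$... more precisely, writing $\lambda=t^{H}$, the event $\{U_t\ge u\}$ has the same probability as $\{\Delta_{t^H}U_1\ge u\}=\{U_1\ge \Delta_{t^{-H}}u\}$ (the dilation acts diagonally and monotonically on each coordinate, and we have reduced to $u^i\ge0$). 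Therefore it suffices to prove $\mp(U_1\ge v)\le Ce^{-\vertiii{v}^2/C}$ for all $v\in\mathbb R^n$ with nonnegative coordinates, and then substitute $v=\Delta_{t^{-H}}u$, using $\vertiii{\Delta_{t^{-H}}u}=t^{-H}\vertiii{u}$ and the equivalence \eqref{equivalence of homo norm on g} between $\vertiii{\cdot}$ and $\|\cdot\|_{\textsc{CC}}$ to land on $Ce^{-\|u\|^2_{\textsc{CC}}/(Ct^{2H})}$.

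For the $t=1$ bound, I would argue coordinate by coordinate. If $v^i\in\mathcal U_k$ then $\vertiii{v}\ge (v^i)^{1/k}$ is achieved (up to the max over $i$) by some coordinate, so it is enough to show $\mp(U_1^i\ge a)\le C e^{-a^{2/k_i}/C}$ for each $i$, where $k_i$ is the layer of the $i$-th basis element. Now $U_1^i$ is, by the definition of the log-signature, a fixed linear combination $\sum_{I:\,l(I)\le N}\Lambda_I(B)_1\,[e_I]^i_{\mathcal B}$ of the Chen--Strichartz coefficients $\Lambda_I(B)_1$, and each $\Lambda_I(B)_1$ with $l(I)=\ell$ is (up to constants) an iterated integral of $B$ of order $\ell$. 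A homogeneous basis element in layer $k$ only involves brackets $e_I$ with $l(I)=k$, so $U_1^i$ for $i$ in layer $k$ is a linear combination of $k$-fold iterated integrals of fractional Brownian motion. By the standard rough-path / Gaussian tail estimates for iterated integrals of fBm with $H>1/4$ (e.g. the exponential integrability of $\|B\|^{1/H}$ in $p$-variation or Hölder norm via Fernique/Borell, which controls iterated integrals of order $\ell$ by $\|B\|^\ell$), one has $\mp(|\int_{\Delta^\ell} dB^{\otimes\ell}|\ge a)\le C\exp(-a^{2/\ell}/C)$; since $\ell=k$ for the relevant terms, the union bound over finitely many $I$ and coordinates $i$ gives $\mp(U_1^i\ge a)\le Ce^{-a^{2/k}/C}$, hence $\mp(U_1\ge v)\le n\max_i\mp(U_1^i\ge v^i)\le Ce^{-\vertiii{v}^2/C}$.

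The main obstacle I anticipate is making the tail bound for the iterated integrals genuinely of Gaussian-in-$a^{2/\ell}$ type with the correct exponent $2/\ell$ matching the homogeneity, rather than merely sub-Gaussian; this requires invoking the right integrability result — Cass--Litterer--Lyons type tail estimates, or the Fernique-type bound $\me\exp(\eta\|B\|_{p\text{-var}}^{2})<\infty$ for the fBm lifted to a rough path, combined with the deterministic rough-path estimate $|\mathbf X^\ell_{0,1}|\lesssim \|B\|_{p\text{-var}}^\ell$. One must check that the homogeneity of the iterated integral of order $k$ under $B\mapsto cB$ is exactly $c^k$, which is what pairs the $k$-th layer with the exponent $2/k$ and ultimately yields the single clean exponent $2H$ on $t$ after the scaling step. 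Everything else — the monotonicity of $\Delta_\lambda$ on coordinates, the reduction to $u^i\ge 0$ (already granted in the text), and the norm equivalence — is routine.
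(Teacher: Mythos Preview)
Your overall strategy is sound and close in spirit to the paper's, but there is one genuine slip and one imprecision worth flagging.

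The slip is in your final step: the event $\{U_1\ge v\}=\bigcap_i\{U_1^i\ge v^i\}$ is an intersection, so writing $\mathbb P(U_1\ge v)\le n\max_i\mathbb P(U_1^i\ge v^i)$ is technically true but useless, because if some coordinate $v^j$ is zero (or small) then $\mathbb P(U_1^j\ge v^j)$ is of order one and the max does not decay like $e^{-\vertiii{v}^2/C}$. The correct move is to pick the single index $i^*$ realizing $\vertiii{v}=(v^{i^*})^{1/k_{i^*}}$ and bound $\mathbb P(U_1\ge v)\le\mathbb P(U_1^{i^*}\ge v^{i^*})\le Ce^{-(v^{i^*})^{2/k_{i^*}}/C}=Ce^{-\vertiii{v}^2/C}$. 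With this fix your argument goes through.

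The imprecision is in the justification of $\mathbb P(|\int_{\Delta^\ell}dB^{\otimes\ell}|\ge a)\le Ce^{-a^{2/\ell}/C}$: for $H\le 1/2$ the $\ell$-fold iterated integral is \emph{not} deterministically controlled by $\|B\|_{p\text{-var}}^\ell$ (the first level alone), so ordinary Fernique on $B$ is insufficient. What you need is the generalized Fernique theorem for the \emph{lifted} rough path (Friz--Oberhauser), which gives directly that $\|S_N(B)_1\|_{\textsc{CC}}$ has Gaussian tails; this is equivalent to the layerwise statement you want.

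The paper's route is slightly cleaner and avoids the coordinate bookkeeping: it shows in one line, via the dilation $\Delta_{\|u\|_{\textsc{CC}}^{-1}}$ and the equivalence \eqref{equivalence of homo norm on g}, that $\{U_t\ge u\}\subset\{\|U_t\|_{\textsc{CC}}\ge c\,\|u\|_{\textsc{CC}}\}$, then applies scaling and the Gaussian tail of $\|U_1\|_{\textsc{CC}}$ from \cite{FO2010} directly. Your approach scales first and then unpacks the CC-norm coordinate by coordinate; both rest on the same Friz--Oberhauser input, but the paper packages it as a single norm inequality rather than a per-layer tail bound.
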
 
\begin{proof}
Recall that $\triangle_\lambda$ is the dilation operator on $\mathbb{R}^n$ weighted with respect to the decomposition
\[
\mathbb{R}^n=\mathcal{U}_1 \oplus \cdots \oplus \mathcal{U}_N.
\]
 Since we assumed $u^i\geq0$ for $1\leq i\leq n$, clearly, we have
\begin{align*}
\{U_t\geq u\}=\{\triangle_{\|u\|^{-1}_{\textsc{CC}}}U_t\geq \triangle_{\|u\|^{-1}_{\textsc{CC}}}u\}
\subset\left\{\vertiii{\triangle_{\|u\|^{-1}_{\textsc{CC}}}U_t}\geq \vertiii{\triangle_{\|u\|^{-1}_{\textsc{CC}}} u}\right\}.
\end{align*}
Note that $\|\triangle_{\|u\|^{-1}_{\textsc{CC}}}u\|_{\textsc{CC}}=1$. Then by \eqref{equivalence of homo norm on g} and the fact that $\vertiii{\cdot}$ are homogeneous with respect to the dilation, we obtain
\begin{align*}
\{U_t\geq u\}&\subset\left\{    \vertiii{\triangle_{\|u\|^{-1}_{\textsc{CC}}}U_t}\geq C_1\right\}\\
&=\left\{\|u\|^{-1}_{\textsc{CC}}\,\vertiii{U_t}\geq C_1\right\}\\
&\subset\left\{\|U_t\|_{\textsc{CC}}\geq C_1/C_2\|u\|_{\textsc{CC}}\right\}.
\end{align*}
Therefore
\begin{align*}
\mp(U_t\ge u)&\leq\mp(\|U_t\|_{\textsc{CC}}\ge C_1/C_2\|u\|_{\textsc{CC}})\\
 & \leq  \mp(\|U_1\|_{\textsc{CC}}\ge t^{-H}C_1/C_2\|u\|_{\textsc{CC}}) \\
 & \leq Ce^{-\frac{\|u\|^2_{CC}}{Ct^{2H}}}.
\end{align*}
In the above, we used the fact that from \cite{FO2010}, $\|U_1\|_{\textsc{CC}}$ has Gaussian tail for the last inequality.
\end{proof}

\medskip
\noindent{\bf Estimate of Malliavin derivatives.}\ \  We aim to estimate the Malliavin derivative of $U_t$. In order to handle the $\bch([0,1])^{\otimes k}$-norm of $\mathbf{D}^kU_k$, we use an idea of Inahama \cite{Inahama} which we describe now. More details can be found in the paper \cite{Inahama}.

Let $F$ be a smooth random variable in the sense of Malliavin calculus. Recall that there is an isometry $\mathcal{K}: \bch{([0,1])}\to\ch([0,1])$. For the simplicity of notation,  in the rest of this subsection we will write $\D_h F$ for $\D_{\mathcal{K}(h)}F$ when $h\in\bch([0,1])$, and similarly for higher order directional Malliavin derivatives. 

For any $h_i\in\bch([0,1]), i=1,...,k$, the $k$-th Malliavin derivative of $F$ along the directions of $h_i$, denoted by
$$\D_{h_1,...,h_k}F,$$
is a $k$-linear form on the space $\bch([0,1])$.  Now introduce an $d$-dimensional fractional Brownian motion $\hat{B}$ independent of the original $B$. Suppose there is a natural way to extend $\D_{h_1,...,h_k}F$ from $\bch([0,1])$ to the sample paths of $\hat{B}$; that is,  there is a natural way to define
$$\D_{\hat{B},...,\hat{B}}F$$
for almost all sample paths of $\hat{B}$.  From now on, we fix a sample path $\omega$ of the original fractional Brownian motion $B$. Then, 
$$\hat{\D}_{h_1,...,h_k}(\D_{\hat{B},...,\hat{B}}F(\omega))=k\D_{h_1,...,h_k}F(\omega),$$
where $\hat{\D}$ is the Malliavin derivative along $h_1,..,h_k$ with respect to $\hat{B}$.  Further suppose that $\D_{\hat{B},...,\hat{B}}F(\omega)$ is in the $k$-th Wiener chaos of $\hat{B}$. By equivalence of norms, one then has
\begin{align*}
\|\D^kF(\omega)\|_{\ch([0,1])^{\otimes k}}&=\frac{1}{k}\big(\hat{\me}\|\hat{\D}(\D_{\hat{B},...,\hat{B}}F(\omega))\|^2_{\mathcal{H}([0,1])^{\otimes k}}\big)^{\frac{1}{2}}\\
&\leq \frac{1}{k}\|(\D_{\hat{B},...,\hat{B}})F(\omega)\|_{\hat\D^{2,2}}\leq C_k\|\D_{\hat{B},...,\hat{B}}F(\omega)\|_{L^2(\hat{\mp})}.
\end{align*}
In the above, $\|\cdot\|_{\hat\D^{2,2}}$ is the $(2,2)$-norm with respect to the fractional Brownian motion $\hat{B}$. Therefore in order to bound the $(k,p)$-norm of $F$, one only need to bound the $L^p$-norm (under $\hat{\mp}$) of  $\|\D_{\hat{B},...,\hat{B}}F(\omega)\|_{L^2(\hat{\mp})}$.

\begin{lemma}\label{lem: Malliavin derivative}Let $U_t$ be the log-signature process. We have
	\begin{equation}\label{M derivative of u}
		\|\D U_t\|_{k,p}\leq C_{k,p} t^{H}.
	\end{equation}
\end{lemma}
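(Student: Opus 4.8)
The plan is to apply the Inahama-type representation of higher Malliavin derivatives described just above the statement, so that estimating $\|\mathbf{D}^k U_t\|_{\mathcal H([0,1])^{\otimes k}}$ reduces to estimating the $L^2(\hat{\mathbb P})$-norm of a suitable directional derivative $\mathbf{D}_{\hat B,\dots,\hat B}U_t(\omega)$ along an independent fBm $\hat B$. First I would recall that $U_t$ solves the rough differential equation \eqref{SDE u}, so each Malliavin derivative $\mathbf{D}_h U_t$ solves the formal linearized equation obtained by differentiating \eqref{SDE u} in the direction $h$; iterating, the $k$-th derivative $\mathbf{D}_{\hat B,\dots,\hat B}U_t(\omega)$ solves a system of rough/Young differential equations driven jointly by $B$ (frozen at $\omega$) and $\hat B$, whose coefficients are (derivatives of) the polynomial vector fields $V_i$. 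The key structural point is that, because the $V_i$ generate a nilpotent Lie algebra, this linearized system has polynomial coefficients and, crucially, after we use the scaling \eqref{global scaling 2} the whole problem at time $t$ is conjugate to the problem at time $1$ up to the dilation $\Delta_{t^H}$, which contributes exactly the factor $t^H$ claimed in \eqref{M derivative of u}.

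Concretely, the steps in order would be: (1) write $\mathbf{D}^k U_t$ via the Inahama identity, reducing the $\mathcal H^{\otimes k}$-norm to $C_k\|\mathbf{D}_{\hat B,\dots,\hat B}U_t(\omega)\|_{L^2(\hat{\mathbb P})}$, and verify $\mathbf{D}_{\hat B,\dots,\hat B}U_t(\omega)$ lives in the $k$-th Wiener chaos of $\hat B$ (true because $U_t$ is a polynomial in iterated integrals of $B$, hence its $k$-th mixed derivative along $\hat B$ is $k!$ times a fixed $k$-fold iterated integral of $\hat B$ with coefficients measurable in $\omega$); (2) apply the self-similarity \eqref{global scaling 2}: $(U_{t})\stackrel{law}{=}(\Delta_{t^H}U_1)$, and note this scaling commutes appropriately with the Malliavin derivative, so that $\|\mathbf{D}^k U_t\|_{k,p}$ is controlled by $t^H$ times $\|\mathbf{D}^k U_1\|_{k,p}$ componentwise on each stratum $\mathcal U_j$ (the $j$-th stratum picks up $t^{jH}$, but for the statement about $\mathbf{D}U_t$ as written one keeps the worst power, or more precisely one tracks the grading and the leading order is $t^H$; here I would be careful to state it stratum-by-stratum if needed); (3) establish the time-$1$ bound $\|\mathbf{D}^k U_1\|_{k,p}<\infty$ for every $k,p$, which follows from standard rough-path moment estimates for the linearized equation together with the fact that $\|U_1\|$, $\|\mathbf J_1\|$, and the relevant Jacobian-type objects have moments of all orders (as already noted in the Remark after the log-signature definition and in Proposition \ref{integrality JJ}); (4) combine (1)--(3) to get \eqref{M derivative of u}.

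The main obstacle I expect is step (3) together with the rigorous justification in step (1) that the ``natural extension'' of the multilinear form $\mathbf{D}_{h_1,\dots,h_k}U_t$ to the sample paths of $\hat B$ exists and is given by a genuine rough differential equation driven by $(B,\hat B)$ — this is where the unboundedness of the polynomial vector fields $V_i$ bites, since the classical Inahama estimates and the $C_k$ in the norm-equivalence were stated for $C^\infty$-bounded coefficients. I would handle this exactly as in the proof of Theorem \ref{integrability small e-v}: exploit nilpotency, so that the linearized (and higher-order variational) equations driven by $(B,\hat B)$ actually \emph{close up} into a finite triangular system with polynomial — but, on each fixed sample path, effectively bounded on the relevant range — coefficients, whose solutions are again finite sums of iterated integrals of $(B,\hat B)$; the needed $L^p$ moments then come from hypercontractivity on a fixed Wiener chaos. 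A secondary technical point is checking that the directional Malliavin derivative $\mathbf{D}_h$ commutes with the dilation $\Delta_{t^H}$ up to the scaling of $h$, which is immediate from linearity of $\Delta_\lambda$ on $\mathbb R^n$ and the definition of $\mathbf{D}$, but should be stated.
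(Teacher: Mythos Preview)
Your proposal is correct in outline and would ultimately succeed, but it takes a considerably more involved route than the paper. The paper does \emph{not} work with the SDE \eqref{SDE u} for $U_t$ at all in this lemma; instead it goes back to the signature $X_t$ and its explicit components $I^m_t=\int_{0\le t_1\le\cdots\le t_m\le t} dB_{t_1}\otimes\cdots\otimes dB_{t_m}$. For these, the directional derivative $\mathbf D_{\hat B,\dots,\hat B}I^m_t$ is computed by hand: one simply replaces $k$ of the $dB$ factors by $d\hat B$ and sums over the choices, obtaining a mixed iterated integral of total homogeneity $m$ in the joint fBm $(B,\hat B)$. Joint self-similarity of $(B,\hat B)$ then gives $\Xi(t,\hat B,B)\stackrel{law}{=}t^{mH}\Xi(1,\hat B,B)$ in one line, and Inahama's inequality yields $\|\mathbf D I^m_t\|_{k,p}\le C_{k,p}\,t^{mH}$. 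Summing over $m=1,\dots,N$ gives the bound for $X_t$ with leading order $t^H$, and the passage to $U_t=[\exp^{-1}X_t]_{\mathcal B}$ is immediate since $\exp^{-1}$ is polynomial.

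The upshot is that every obstacle you flag --- the extension of the multilinear form to sample paths of $\hat B$, the unboundedness of the $V_i$, the closing-up of the linearized system, the commutation of $\mathbf D$ with the dilation $\Delta_{t^H}$ --- evaporates in the paper's approach, because one never touches the SDE or its variational equations: everything is an explicit iterated integral from the start, the extension to $\hat B$ is literal substitution, and the scaling is the raw self-similarity of fBm applied to a homogeneous polynomial of degree $m$ in the increments. Your route via linearizing \eqref{SDE u}, invoking Proposition~\ref{integrality JJ} for the Jacobian, and tracking the dilation on $\mathbb R^n$ would work (nilpotency does make the variational tower terminate in finitely many iterated integrals, as you say), but it reproduces by indirect means exactly the iterated-integral identities that the paper writes down directly.
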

\begin{proof}
Let $\hat{B}$ in a $d$-dimensional fractional Brownian motion independent of $B$. We  take $F$ in the above argument to be the signature $X_t$ of $B$. At the $m$-th tensor level, the component of $X_t$ is
$$I_t^m=\int_{0\leq t_1\leq\cdots\leq t_m\leq t}dB_{t_1}\otimes\cdots\otimes dB_{t_m}.$$
Hence,
$$\D^2_{h_1,h_2} I^m_t=\sum_{i,j=1}^m\int_{0\leq t_1\leq\cdots\leq t_m\leq t}dB_{t_1}\otimes\cdots\otimes dh_{1,t_i}\otimes\cdots\otimes dh_{2,t_j}\otimes\cdots\otimes dB_{t_m}.$$
Clearly, the above can be extended from $h$ to the sample paths of $\hat{B}$, and we obtain
$$\Xi(t,\hat{B},B)\triangleq\D_{\hat{B},\hat{B}} I^m_t=\sum_{i,j=1}^m\int_{0\leq t_1\leq\cdots\leq t_m\leq t}dB_{t_1}\otimes\cdots\otimes d\hat{B}_{t_i}\otimes\cdots\otimes d\hat{B}_{t_j}\otimes\cdots\otimes dB_{t_m}.$$
By the rescaling property of fractional Brownian motions, we have
$$\Xi(t,\hat{B},B)\stackrel{law}{=}t^{mH}\Xi(1,\hat{B},B).$$
Therefore, by the argument of Inahama, we have
$$\|\D I^m_t\|_{2,p}\leq C_{p} t^{mH}.$$
The higher order Malliavin derivative of $I_t^m$ can be treated similarly, and we have 
$$\|\D I^m_t\|_{k,p}\leq C_{k,p} t^{mH}.$$
Therefore, 
$$\|\D X_t\|_{k,p}\leq C_{k,p} t^{H}.$$
Since
\[
U_t=[\exp^{-1}( X_t)]_\mathcal{B} = \sum_{I, d(I) \le N } \Lambda_I (B)_t [e_{I}]_\mathcal{B},
\] 
we obtain a similar estimate for $U_t$,
\begin{align*}\|\D U_t\|_{k,p}\leq C_{k,p} t^{H}.\end{align*}
The proof is thus completed.
\end{proof}

\medskip
\noindent{\bf Estimate of Malliavin matrix.}\ \  
The bulk of the work in estimating the Malliavin matrix of $U_t$ is done in Theorem \ref{integrability small e-v}.  The lemma below is a direct corollary.
\begin{lemma}\label{lem: Malliavin matrix}
Let $\gamma_t(U)$ be the Malliavin matrix of $U_t$. There exist  constants $C>0$ and $\alpha>0$, all depending on $n$, such that 
\begin{equation} 
	\|\gamma_t(U)^{-1}\|_{n,2^{n+2}}\leq \frac{C}{t^{\alpha}},
\end{equation}
for all $t>0$.
\end{lemma}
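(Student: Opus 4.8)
The plan is to deduce this from Theorem \ref{integrability small e-v}, which controls the smallest eigenvalue $\lambda_t$ of the Malliavin matrix $\gamma_t(U)$, together with the chain rule estimates for Sobolev norms of $\gamma_t(U)^{-1}$. First I would recall that for a non-degenerate random matrix, the inverse $\gamma_t(U)^{-1}$ can be written via Cramer's rule as $\frac{1}{\det\gamma_t(U)}\operatorname{adj}(\gamma_t(U))$, so that each entry of $\gamma_t(U)^{-1}$ is a polynomial in the entries of $\gamma_t(U)$ divided by $\det\gamma_t(U)$; hence the Malliavin derivatives of $\gamma_t(U)^{-1}$ up to order $n$ can be expressed, via the Leibniz rule and the identity $\mathbf{D}(\gamma^{-1})=-\gamma^{-1}(\mathbf{D}\gamma)\gamma^{-1}$ applied iteratively, in terms of $\gamma_t(U)^{-1}$, the derivatives $\mathbf{D}^j\gamma_t(U)$ for $j\le n$, and products thereof. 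The standard estimate (see e.g. the proof in \cite{Nualart06}) then gives, for any $p\ge 1$,
\[
\|\gamma_t(U)^{-1}\|_{n,p}\le C_{n,p}\,\|\det\gamma_t(U)^{-1}\|_{q_1}^{a}\,\big(1+\|\gamma_t(U)\|_{n,q_2}\big)^{b}
\]
for suitable exponents $q_1,q_2,a,b$ depending only on $n,p$.

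Next I would control the two factors on the right. For the determinant: since $\gamma_t(U)$ is an $n\times n$ symmetric nonnegative matrix with smallest eigenvalue $\lambda_t$, we have $\det\gamma_t(U)\ge \lambda_t^n$, hence $\det\gamma_t(U)^{-1}\le \lambda_t^{-n}$, and by Theorem \ref{integrability small e-v}, $\|\lambda_t^{-1}\|_p\le C t^{-2HN}$ for every $p$, so $\|\det\gamma_t(U)^{-1}\|_{q_1}\le C t^{-2HNn}$. For the positive-order Sobolev norm of $\gamma_t(U)$ itself: each entry is $\langle \mathbf{D}U_t^i,\mathbf{D}U_t^j\rangle_{\mathcal H}$, which is a smooth functional whose Malliavin derivatives are built from $\mathbf{D}^\ell U_t$ for $\ell\le n+1$; by Lemma \ref{lem: Malliavin derivative} these satisfy $\|\mathbf{D}U_t\|_{k,p}\le C_{k,p}t^H$, and since the relevant $\mathcal H$-norm already absorbs one power of $t^H$ beyond the bare derivative estimates one gets $\|\gamma_t(U)\|_{n,q_2}\le C\,t^{2H}$, which for $t\in(0,1]$ is bounded. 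Combining, $\|\gamma_t(U)^{-1}\|_{n,2^{n+2}}\le C\,t^{-2HNn}$ for $t\in(0,1]$, which is the desired bound with $\alpha=2HNn$; for $t\ge 1$ the bound is trivial since everything has finite moments of all orders uniformly on compacts, or one simply notes the statement is only needed for small $t$.

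The main obstacle I anticipate is not conceptual but bookkeeping: verifying carefully that the iterated application of $\mathbf{D}(\gamma^{-1})=-\gamma^{-1}(\mathbf{D}\gamma)\gamma^{-1}$ produces only terms of the form (powers of $\gamma^{-1}$) times (products of $\mathbf{D}^j\gamma$ with total order $\le n$), and then invoking the generalized Hölder inequality with the right conjugate exponents so that each factor lands in an $L^p$ space where we have a bound — this is exactly the computation carried out in \cite[Proposition 2.1.4]{Nualart06} and requires only that $\det\gamma_t(U)^{-1}\in\bigcap_p L^p$ and $U_t\in\mathbb D^\infty$, both of which we have established. A minor point to be careful about is tracking the exact power of $t$: one should double-check whether the sharp exponent is $2HNn$ or can be improved, but since the statement only claims existence of \emph{some} $\alpha>0$, any finite polynomial bound suffices.
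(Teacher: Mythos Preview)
Your proposal is correct and follows essentially the same approach as the paper: both arguments combine the determinant/eigenvalue bound from Theorem \ref{integrability small e-v} with the Malliavin derivative estimate from Lemma \ref{lem: Malliavin derivative}, and then iterate the identity $\mathbf{D}(\gamma^{-1})=-\gamma^{-1}(\mathbf{D}\gamma)\gamma^{-1}$ together with H\"older's inequality to control the Sobolev norm of $\gamma_t(U)^{-1}$. The paper tracks the power of $t$ a bit more explicitly at the level of individual entries (obtaining $t^{-2H(Nn-n+1)}$ via Cramer's rule before iterating), but as you correctly note, only the existence of some $\alpha>0$ is needed, which matches the paper's own remark following the lemma.
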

\begin{proof}
By Theorem \ref{integrability small e-v}, we have for all $k\geq0$ and $p>1$,
 \begin{align}\label{estimate M derivative and matrix}\|\det\gamma_t(U)^{-1}\|_p\leq \frac{C}{t^{2HNn}},\quad\mathrm{and}\quad \|\mathbf{D}U_t\|_{k,p}\leq C_{k,p}t^H.\end{align}
As a direct consequence, we have for any $(i,j)$-th entry of $\gamma_t(U)^{-1}$,
\begin{align}\label{M matrix ij entry}
\left\|\left(\gamma_t(U)^{-1}\right)^{i,j}\right\|_p\leq C_{k,p,n}\frac{t^{2H(n-1)}}{t^{2HNn}}=\frac{C_{k,p,n}}{t^{2H(Nn-n+1)}}.
\end{align}
  The rest of the proof follows by repeatedly employing \eqref{estimate M derivative and matrix}, \eqref{M matrix ij entry} and  the identity
 $$\mathbf{D}(\gamma_t(U)^{-1})^{ij}=-\sum_{k,l=1}^n(\gamma_t(U^{-1})^{ik}(\gamma_t(U)^{-1})^{lj}\mathbf{D}\gamma_t(U)^{kl}.$$ 
 [More details can be found in \cite{BNOT16} (equation (34) and the paragraph after it).]
\end{proof}
\begin{remark}
We could have been more careful and obtain a more explicit representation for $\alpha$. But for our purpose, knowing the existence of such an $\alpha$ is enough.
\end{remark}

\medskip

Applying Lemma \ref{lem: tail estimate}, Lemma \ref{lem: Malliavin derivative} and Lemma \ref{lem: Malliavin matrix} to (\ref{formula density}), we have, for some constant $\beta>0, C>0$,
\begin{align}\label{upper bound 1}p_t(u)\leq \frac{C}{t^\beta} e^{-\frac{\|u\|_{\textsc{CC}}^2}{Ct^{2H}}}, \quad\mathrm{for\ all}\ u\in\mathbb R^n.\end{align}
With this preliminary bound  for the density $p_t(u)$ of $U_t$, together with the self-similarity of fractional Brownian motions, we are able to sharpen the exponent $\beta$ to the following form.

\begin{theorem}
Denote by $p_t(u)$ the density of $U_t$ with respect to the Lebesgue measure of $\mathbb{R}^n$. There exists a constant $C>0$ such that 
\begin{align*}
p_t(u) \leq  \frac{C}{t^{\nu/2}} e^{-\frac{\|u\|_{\textsc{CC}}^2}{Ct^{2H}}}, \quad\mathrm{for\ all}\ u\in \mathbb{R}^n.
\end{align*}
Here
$\nu=\sum_{i=1}^N i\dim(\mathcal{V})_i$ is the Hausdorff dimension of $\frak{g}_N(\mr^d)$. 
\end{theorem}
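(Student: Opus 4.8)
The plan is to bootstrap the preliminary bound \eqref{upper bound 1} using the exact scaling property \eqref{global scaling 2} of the log-signature, in order to replace the unspecified exponent $\beta$ by the sharp value $\nu/2$. First I would recall that by \eqref{global scaling 2} we have $U_{ct}=^{\mathrm{law}}\Delta_{c^H}U_t$, so that the densities transform according to the change-of-variables formula for the dilation $\Delta_\lambda$ on $\mathbb{R}^n$. Since $\Delta_\lambda$ acts by $\lambda^k$ on the layer $\mathcal{U}_k$, its Jacobian determinant is $\lambda^{\sum_k k\dim\mathcal{U}_k}=\lambda^{\sum_k k\dim\mathcal{V}_k}=\lambda^\nu$, where $\nu$ is precisely the Hausdorff (homogeneous) dimension. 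Hence, writing $p_t$ for the density of $U_t$, the scaling identity gives
\begin{align*}
p_{ct}(u)=c^{-H\nu}\,p_t\!\left(\Delta_{c^{-H}}u\right),\qquad u\in\mathbb{R}^n.
\end{align*}

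Next I would exploit this identity to trade a factor of $t$ for a factor of the homogeneous norm of $u$. Fix $t\in(0,1]$ and apply the identity with the choice $c=t^{-1}\vertiii{u}^{1/H}$ (so that $ct$ has homogeneous-norm-one-type scale after dilation); equivalently, reduce to estimating $p_s$ at a point with $\|\cdot\|_{\textsc{CC}}\asymp 1$ and at the rescaled time $s\asymp t/\vertiii{u}^{1/H}$. Then one applies the crude bound \eqref{upper bound 1} to $p_s$ at that rescaled time and point: the exponential factor $e^{-\|\cdot\|^2_{\textsc{CC}}/(Cs^{2H})}$ combined with the prefactor $s^{-\beta}$ is, after undoing the dilation, controlled by $C t^{-\nu/2}e^{-\|u\|^2_{\textsc{CC}}/(Ct^{2H})}$ — the key point being that the polynomial prefactor in $s$, no matter how large $\beta$ is, is absorbed by an arbitrarily small fraction of the Gaussian exponential, while the genuine loss of homogeneity is recorded by the Jacobian factor $c^{-H\nu}$, which contributes exactly the power $\nu/2$. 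This is the standard "iteration / Chapman--Kolmogorov is not even needed, pure scaling suffices" trick familiar from the hypoelliptic heat kernel literature; here it works verbatim because the exponent in \eqref{upper bound 1} already has the correct $t^{2H}$-homogeneity, matching the dilation.

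More precisely, I would split into the regime $\vertiii{u}\le t^H$ and $\vertiii{u}\ge t^H$ (using \eqref{equivalence of homo norm on g} to pass freely between $\vertiii{u}$ and $\|u\|_{\textsc{CC}}$). In the first regime, apply the scaling identity with $c$ chosen so that the rescaled time becomes of order $1$; then \eqref{upper bound 1} at time $\asymp 1$ gives a uniform constant, and undoing the dilation yields the factor $t^{-\nu/2}$ while the exponential is $\asymp 1$ since $\|u\|_{\textsc{CC}}\lesssim t^H$. In the second regime, rescale so that the spatial point has norm of order $1$; then \eqref{upper bound 1} gives $p_s(\cdot)\le C s^{-\beta}e^{-c/s^{2H}}$ at a time $s\asymp t/\vertiii{u}^{1/H}\le 1$, and since $x\mapsto x^{-\beta}e^{-c/x^{2H}}$ is bounded on $(0,1]$ by $C e^{-c'/x^{2H}}$ for any $c'<c$, undoing the dilation (which multiplies by $c^{H\nu}=(\vertiii{u}/t)^{\nu}$ and rescales the exponent back to $\|u\|^2_{\textsc{CC}}/(Ct^{2H})$) produces exactly $C t^{-\nu/2}e^{-\|u\|^2_{\textsc{CC}}/(Ct^{2H})}$ after re-adjusting the constant $C$ in the exponent. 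For $t>1$ the estimate is immediate from integrability of the density and the moment bounds on $\|U_t\|$.

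The main obstacle — really a bookkeeping one rather than a conceptual one — is to carry the dilation-covariance through cleanly: one must check that the change of variables under $\Delta_{c^H}$ produces precisely the Jacobian $c^{H\nu}$ with $\nu=\sum_i i\dim\mathcal{V}_i$ (matching $\dim\mathcal{U}_i=\dim\mathcal{V}_i$), and that the two homogeneous norms $\vertiii{\cdot}$ and $\|\cdot\|_{\textsc{CC}}$ may be interchanged at the cost of harmless constants absorbed into $C$. Once the scaling relation and this identification of $\nu$ are in hand, the sharpening of $\beta$ to $\nu/2$ is forced, because the preliminary bound is already homogeneous of the right order in the pair $(t^{2H},\|u\|^2_{\textsc{CC}})$ up to the unknown prefactor, and scaling is the unique homogeneity-compatible way to pin that prefactor down.
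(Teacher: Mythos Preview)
Your strategy is correct but considerably more elaborate than what the paper actually does. The paper's entire proof is a \emph{single} application of the scaling identity you derive, with the choice $c=t$: this gives
\[
p_t(u)=(\text{Jacobian of }\Delta_{t^{-H}})\cdot p_1\!\left(\Delta_{t^{-H}}u\right),
\]
and one then inserts the preliminary bound \eqref{upper bound 1} on the right-hand side \emph{at time $1$}. At $t=1$ the unknown prefactor $t^{-\beta}$ collapses to a constant, and homogeneity $\|\Delta_{t^{-H}}u\|_{\textsc{CC}}=t^{-H}\|u\|_{\textsc{CC}}$ supplies the Gaussian exponent. The sharp power of $t$ in front is simply the dilation Jacobian, and nothing further is argued.

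Your route---rescaling in the regime $\vertiii u\ge t^H$ so that the \emph{spatial} variable has unit norm, and then absorbing the leftover $s^{-\beta}$ into the Gaussian tail---would also work, but it is the kind of maneuver one resorts to when only an \emph{approximate} or local scaling relation is available (as in general hypoelliptic heat-kernel bounds on manifolds). Here the exact global self-similarity lets you always normalize the \emph{time} to $1$, which kills $\beta$ outright; there is no need for a case split on $\vertiii u\lessgtr t^H$, no need to bound $s^{-\beta}e^{-c/s^{2H}}$ uniformly in $s$, and no separate treatment of $t>1$. As a minor computational slip, your expression $c^{H\nu}=(\vertiii u/t)^\nu$ should read $(\vertiii u/t^H)^\nu$.
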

\begin{proof}
By the self-similarity of the fractional Brownian motion
\begin{align*}
\triangle_{t^{H}} U_1\stackrel{law}{=} U_t.
\end{align*}
We thus have
\begin{align}\label{density cov}p_t(u)=\frac{1}{t^{\nu/2}}p_1\left(\triangle_{t^{-H}}u\right).\end{align}
The desired result then follows from applying (\ref{upper bound 1}) to the right hand-side of (\ref{density cov}) with $t=1$ and the fact that $\|\cdot\|_{\textsc{CC}}$ is homogeneous with respect to the dilation.
\end{proof}

\section{Positivity of the density} This section is devoted to proving that $p_t(u)$ is strictly positive for all $u\in\mr^n$. We start with some preparations related to the Cameron-Martin space of $B$.

\subsection{Cameron-Martin space of fBm} Denote by $\bch([0,T])$ the Cameron-Martin space associated to a fractional Brownian motion over $[0,T]$. For a smooth function $f:[0,T]\to\mr$, define
$$C^2(f)=\|f\|_{\infty;[0,T]}+\|f'\|_{\infty;[0,T]}+\|f''\|_{\infty;[0,T]}.$$

\begin{lemma}\label{lem: continuous embedding when H<1/2}
For $H<1/2$, the inclusions $\mathcal{H}\subseteq L^2([0,1])$ and $W_0^{1,2}\subseteq \bar{\mathcal{H}}$ are continuous embeddings.
\end{lemma}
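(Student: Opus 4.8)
The plan is to represent the two isometries attached to the fractional Brownian motion through compositions of power-weight multiplications and Riemann--Liouville fractional integrals, and then to reduce each of the claimed embeddings to the $L^2([0,1])$-boundedness of a single such operator whose net fractional order equals $1/2-H>0$. Throughout, $I_{0+}^\alpha$ and $I_{1-}^\alpha$ denote the left- and right-sided Riemann--Liouville fractional integrals of order $\alpha>0$, $D_{1-}^\alpha$ a right-sided fractional derivative, and $M_\beta$ multiplication by $t^\beta$. For $H<1/2$ one has, up to normalizing constants and the precise ordering of the power weights (see \cite{Nualart06}, Chapter~5, and the references therein), the identities
\[
K_H^*\varphi=c_H\,M_{1/2-H}\circ D_{1-}^{1/2-H}\circ M_{H-1/2}\,\varphi\qquad (\varphi\in\mathcal{E}),
\]
and, on functions $f$ absolutely continuous with $f(0)=0$ and $f'\in L^2([0,1])$, the operator $K_H$ admits the explicit left inverse $f\mapsto c_H'\,\big(M_{H-1/2}\circ I_{0+}^{1/2-H}\circ M_{1/2-H}\big)f'$; both reduce to routine computations using the semigroup property $I^{\alpha}I^{\beta}=I^{\alpha+\beta}$. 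Recall also that $K_H^*$ maps $\mathcal{H}$ isometrically onto $L^2([0,1])$, and that $\bar{\mathcal{H}}=K_H\big(L^2([0,1])\big)$ with $\|K_H\psi\|_{\bar{\mathcal{H}}}=\|\psi\|_{L^2}$.

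\noindent\textbf{The two reductions.} For $\mathcal{H}\subseteq L^2([0,1])$: inverting the first display gives $\mathrm{id}=T\circ K_H^*$ on $\mathcal{E}$, where $T:=c_H^{-1}\,M_{1/2-H}\circ I_{1-}^{1/2-H}\circ M_{H-1/2}$ is the integral operator with kernel $k(u,s)=c\,u^{1/2-H}\,s^{H-1/2}\,(s-u)^{-1/2-H}\,\mathbf{1}_{\{0<u<s<1\}}$. I would prove that $T$ is bounded on $L^2([0,1])$ by a Schur test with the weight $w(t)=t^{-1/2}$: both Schur bounds reduce to the finiteness of Beta-type integrals such as $\int_0^1\tau^{-H}(1-\tau)^{-1/2-H}\,d\tau$ and $\int_1^{\infty}\sigma^{H-1}(\sigma-1)^{-1/2-H}\,d\sigma$, which holds precisely because $0<1/2-H$ and $1/2+H<1$. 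Then $\|\varphi\|_{L^2}=\|T K_H^*\varphi\|_{L^2}\le\|T\|\,\|K_H^*\varphi\|_{L^2}=\|T\|\,\|\varphi\|_{\mathcal{H}}$ for all $\varphi\in\mathcal{E}$; since $\mathcal{E}$ is dense in $\mathcal{H}$ this extends to all of $\mathcal{H}$, the resulting bounded map equals $T\circ K_H^*$ on $\mathcal{H}$ and is injective because $K_H^*$ is bijective onto $L^2$ and $T$ is a composition of injective operators. Hence $\mathcal{H}\hookrightarrow L^2([0,1])$ continuously. For $W_0^{1,2}\subseteq\bar{\mathcal{H}}$: given $f\in W_0^{1,2}$, set $\psi:=c_H'\,\big(M_{H-1/2}\circ I_{0+}^{1/2-H}\big)(M_{1/2-H}\,f')$. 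Since $t^{1/2-H}$ is bounded on $[0,1]$, $M_{1/2-H}$ is bounded on $L^2([0,1])$; and $M_{H-1/2}\circ I_{0+}^{1/2-H}$, whose kernel $c\,s^{H-1/2}(s-r)^{-1/2-H}\mathbf{1}_{\{0<r<s<1\}}$ is of the same type as $k$, is bounded on $L^2([0,1])$ by the same Schur test. Thus $\psi\in L^2([0,1])$ with $\|\psi\|_{L^2}\le C\|f'\|_{L^2}$, while the left-inverse identity gives $K_H\psi=f$. Therefore $f\in K_H\big(L^2([0,1])\big)=\bar{\mathcal{H}}$ with $\|f\|_{\bar{\mathcal{H}}}=\|\psi\|_{L^2}\le C\|f'\|_{L^2}$, which is the asserted continuous embedding.

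\noindent\textbf{Main obstacle.} The only genuine work is the $L^2([0,1])$-boundedness of the weighted Riemann--Liouville operators $T$ and $M_{H-1/2}\circ I_{0+}^{1/2-H}$: multiplication by the singular power $t^{H-1/2}$ is not bounded on $L^2$ by itself, so the entire gain must come from the smoothing of the fractional integral of order $1/2-H$, and one has to verify that the exponents in Schur's test balance and that the relevant Beta integrals converge --- exactly the place where the hypothesis $H<1/2$ enters. Everything else --- the two fractional-calculus identities, the semigroup property of fractional integration, and the factorization $\mathrm{id}=T\circ K_H^*$ --- is routine bookkeeping.
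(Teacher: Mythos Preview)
The paper gives no argument at all for this lemma: its entire proof is the single sentence ``This is the content of \cite[Lemma 2.3]{OTG19}.'' So there is no internal proof to compare against; your proposal is a genuine direct proof where the paper only cites.

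Your approach---writing $K_H^*$ and $K_H^{-1}$ as compositions of power-weight multiplications and Riemann--Liouville fractional operators of order $1/2-H$, and then bounding the resulting weighted fractional-integral kernels on $L^2$ by a Schur test with weight $t^{-1/2}$---is correct and is in fact the standard route to this embedding (it is essentially how the cited reference proceeds). The Beta-integral computations you describe do converge exactly under the hypothesis $0<H<1/2$, and the second embedding $W_0^{1,2}\hookrightarrow\bar{\mathcal H}$ follows as you say from the explicit left inverse of $K_H$ applied to $f'$. One small point worth tightening: your injectivity argument for $T$ (``a composition of injective operators'') is formally delicate because the middle factor $M_{H-1/2}$ is not a bounded operator on $L^2$, so the factorization is only heuristic. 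It is cleaner to argue directly on the kernel: if $Tg=0$ then $I_{1-}^{1/2-H}\big(s^{H-1/2}g(s)\big)=0$ a.e., and Riemann--Liouville fractional integrals are injective, whence $g=0$. With that adjustment your proof is complete.
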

\begin{proof}
This is the content of \cite[Lemma 2.3]{OTG19}.
\end{proof}

\begin{lemma}\label{lem: quasi-inverse of signature map}For any $M>0,$ there exists a constant $C_{N,M}>0$\,, such that for every $u\in G_N(\mr^d)$ with $\|u\|_{\textsc{CC}}\leqslant M$, we can find a smooth path $\gamma:[0,1]\rightarrow\mathbb{R}^d$ which satisfies: 
\\
(i) $S_N(\gamma)_1=u$;
\\
(ii) $\dot{\gamma}$ is supported on $[1/3,2/3]$;
\\
(iii) $\|\ddot{\gamma}\|_{\infty;[0,1]}\leqslant C_{N,M}$; and hence
\\
(iv) $\|\dot{\gamma}\|_{\infty;[0,1]}\leqslant C_{N,M}$.
\end{lemma}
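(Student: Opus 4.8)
The plan is to realise $u$ as the signature of a piecewise--linear path with a controlled number of controlled pieces (this is the quantitative content of the ball--box theorem), and then to smooth that path by a time reparametrisation, which leaves its signature unchanged.

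By homogeneity of the signature under the dilations, $S_N(\lambda\gamma)_1=\Delta_\lambda\!\big(S_N(\gamma)_1\big)$ (cf. \eqref{global scaling}), together with $\|\Delta_\lambda g\|_{\textsc{CC}}=\lambda\|g\|_{\textsc{CC}}$ and $\ddot{(\lambda\gamma)}=\lambda\,\ddot\gamma$, it is enough to treat the case $M=1$: once $\gamma_v$ is found for $v:=\Delta_{1/M}u$, which satisfies $\|v\|_{\textsc{CC}}\le1$, the path $\gamma_u:=M\gamma_v$ does the job for $u$, with $C_{N,M}=M\,C_{N,1}$. So assume $\|u\|_{\textsc{CC}}\le1$. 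The geometric input I would invoke is classical (ball--box theorem for the Carnot group $\mathbb G_N(\mr^d)$; see e.g. \cite{FV2010}): there exist an integer $k=k(N,d)$ and a constant $c=c(N,d)>0$ such that every $u\in\mathbb G_N(\mr^d)$ with $\|u\|_{\textsc{CC}}\le1$ can be written $u=\exp(x_1)\star\cdots\star\exp(x_k)$ with $x_1,\dots,x_k\in\mr^d$ and $|x_j|\le c$ for all $j$; equivalently, $u=S_N(\gamma_0)_1$ where $\gamma_0$ is the concatenation of $k$ affine segments of $\mr^d$, the $j$-th in direction $x_j$.

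Next I would turn this piecewise--linear path into a smooth one \emph{with exactly the same endpoint}. Fix a $C^\infty$ function $g:[0,1]\to[0,1]$ with $g(0)=0$, $g(1)=1$ and all derivatives of $g$ vanishing at $0$ and at $1$. Partition $[1/3,2/3]$ into consecutive subintervals $I_j=[a_j,a_j+\ell_j]$, $j=1,\dots,k$, with $\ell_j=|I_j|\ge 1/(3k)$; set $p_0=0$, $p_j=x_1+\cdots+x_j$, let $\gamma\equiv0$ on $[0,1/3]$, $\gamma(t)=p_{j-1}+g\!\big((t-a_j)/\ell_j\big)\,x_j$ on $I_j$, and $\gamma\equiv p_k$ on $[2/3,1]$. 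Then $\gamma\in C^\infty([0,1],\mr^d)$ (the flat profile matches to infinite order at every junction and at $1/3,\,2/3$), $\dot\gamma$ is supported in $[1/3,2/3]$, and — since the signature is invariant under reparametrisation and multiplicative under concatenation (Chen's identity) — one still has $S_N(\gamma)_1=\exp(x_1)\star\cdots\star\exp(x_k)=u$ exactly. For the bounds, on $I_j$ one has $\ddot\gamma(t)=g''\!\big((t-a_j)/\ell_j\big)\,x_j/\ell_j^{\,2}$, so $\|\ddot\gamma\|_{\infty;[0,1]}\le\|g''\|_{\infty;[0,1]}\,(3k)^2\max_j|x_j|\le\|g''\|_{\infty;[0,1]}\,(3k)^2 c=:C_{N,1}$, depending only on $N$ and $d$; and since $\dot\gamma(1/3)=0$ and $\dot\gamma$ is supported on an interval of length $1/3$, we get $\|\dot\gamma\|_{\infty;[0,1]}\le\tfrac13\|\ddot\gamma\|_{\infty;[0,1]}$. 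Rescaling back gives the statement for general $M$ with $C_{N,M}=M\,C_{N,1}$.

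\emph{Main obstacle.} The only substantial ingredient is the ball--box decomposition of $u$ into \emph{boundedly many} segments of \emph{bounded} length that reproduce $u$ \emph{exactly}: this is a quantitative Chow--Rashevskii/controllability statement and is where the graded nilpotent structure of $\mathbb G_N(\mr^d)$ is genuinely used. It is tempting to instead mollify a nearly length--minimising Lipschitz horizontal path from $\mathbf 1$ to $u$, but mollification at scale $\delta$ produces a smooth path with $\|\ddot\gamma\|_\infty=O(\delta^{-1})$ whose endpoint is only within $O(\delta)$ of $u$, and removing this error while keeping the acceleration bounded leads straight back to the same controllability estimate. By contrast, once a piecewise--linear representative is in hand the passage to $C^\infty$ is cost--free, precisely because the flat speed profiles at the joins preserve the signature verbatim; the support constraint $[1/3,2/3]$ and the deduction of (iv) from (iii) are then routine bookkeeping.
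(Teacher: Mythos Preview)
Your argument is correct. The paper itself does not prove this lemma: its ``proof'' is the single sentence ``This is a restatement of \cite[Lemma 4.4]{OTG19}.'' So there is no approach to compare against; you have supplied what the paper outsources.

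A couple of minor remarks. First, the geometric input you invoke---that every $u$ in the unit CC-ball of $\mathbb G_N(\mr^d)$ can be written as $\exp(x_1)\star\cdots\star\exp(x_k)$ with $k=k(N,d)$ fixed and $|x_j|\le c(N,d)$---is indeed the crux, and you rightly flag it as the only nontrivial step. It is standard (quantitative Chow--Rashevskii in Carnot groups, or equivalently the explicit bracket-generating construction via a Hall basis together with homogeneity and compactness of the unit CC-sphere); citing \cite{FV2010} for this is reasonable, though a more specific pointer would strengthen the write-up. Second, your smoothing by the flat profile $g$ is clean and preserves the signature exactly because signatures are reparametrisation-invariant and multiplicative under concatenation; the derivation of (iv) from (iii) via $\dot\gamma(1/3)=0$ is fine. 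The dilation reduction to $M=1$ is also correct and gives the explicit dependence $C_{N,M}=M\,C_{N,1}$.
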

\begin{proof}
This is a restatement of \cite[Lemma 4.4]{OTG19}.
\end{proof}

\begin{proposition}\label{Prop: concatenation}
Fix $H\in(0,1)$ and $m\geq2$.  For each $1\leq k\leq m$, let $h_k$ be a smooth path over the interval $[0,T_k]$. Set $T=T_1+...+T_m$. We have, 
\begin{itemize}
\item[(i)]\quad The concatenation of $h_1,...,k_m$, denoted by $h$, is an element in $\bch([0,T])$;
\item[(ii)]\quad The Cameron-Martin norm $\|h\|_{\bch([0,T])}$ is bounded by a constant only depending on  $H$, and $T_k, C^2(h_k)$, for $k=1,...,m$. 
\end{itemize}
\end{proposition}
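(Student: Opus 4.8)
The plan is to reduce everything to the two-path case $m=2$ by induction, since the concatenation of $h_1,\dots,h_m$ is the concatenation of $h_1\star\cdots\star h_{m-1}$ with $h_m$, and part (ii) gives a bound on $C^2$ of the intermediate concatenation that feeds into the next step. So it suffices to treat $m=2$: given $h_1$ smooth on $[0,T_1]$ and $h_2$ smooth on $[0,T_2]$, form $h$ on $[0,T]$ with $T=T_1+T_2$ by setting $h(t)=h_1(t)$ for $t\le T_1$ and $h(t)=h_1(T_1)+h_2(t-T_1)$ for $t\ge T_1$. Note $h$ is continuous but in general only piecewise $C^1$ (there may be a corner at $T_1$ unless $\dot h_1(T_1)=\dot h_2(0)$), so we must work with an $H$-dependent characterization of $\bch$ that tolerates this.

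For $H\ge 1/2$ the Cameron–Martin space consists of absolutely continuous paths $f$ with $\dot f$ such that the double integral $\int_0^T\int_0^T \dot f(s)\dot f(t)|s-t|^{2H-2}\,ds\,dt$ is finite (up to constants), and $\|f\|_{\bch}^2$ is comparable to this quantity plus lower-order terms; since $\dot h$ is bounded on $[0,T]$ (with bound $\max_k\|\dot h_k\|_\infty \le \max_k C^2(h_k)$) and $|s-t|^{2H-2}$ is integrable on $[0,T]^2$, finiteness is immediate and the bound depends only on $H$, $T_k$, and $\|\dot h_k\|_\infty\le C^2(h_k)$. For $H<1/2$ I would invoke Lemma \ref{lem: continuous embedding when H<1/2}: the Sobolev space $W_0^{1,2}([0,T])$ embeds continuously into $\bch([0,T])$. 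The concatenation $h$ lies in $W^{1,2}([0,T])$ because on each subinterval $\dot h$ equals $\dot h_k$ (translated), which is $L^2$ with $\|\dot h\|_{L^2([0,T])}^2 = \sum_k \|\dot h_k\|_{L^2([0,T_k])}^2 \le \sum_k T_k\, C^2(h_k)^2$; and $h(0)=0$ provided we normalize $h_1(0)=0$, so $h\in W_0^{1,2}$. The corner at $T_1$ is harmless since $W^{1,2}$ only sees the derivative in $L^2$, not pointwise. Applying the embedding gives $h\in\bch([0,T])$ with $\|h\|_{\bch([0,T])}\le C_H\|h\|_{W_0^{1,2}}\le C_H\big(\sum_k T_k C^2(h_k)^2\big)^{1/2}$.

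Combining the two regimes proves (i) and the two-path case of (ii). Then the induction: writing $g_{m-1}$ for the concatenation of $h_1,\dots,h_{m-1}$ on $[0,T_1+\cdots+T_{m-1}]$, I need $C^2(g_{m-1})$ — that is, sup-norm bounds on $g_{m-1}$, $\dot g_{m-1}$, $\ddot g_{m-1}$ — to be controlled by $H$, the $T_k$'s and the $C^2(h_k)$'s; but this is clear from the explicit piecewise formula, since on each piece $g_{m-1}$ and its derivatives agree with those of the corresponding $h_k$ (with an additive constant for $g_{m-1}$ itself, bounded by $\sum_{j<k} T_j \|\dot h_j\|_\infty$), except possibly at the finitely many junction points where $\ddot g_{m-1}$ may not exist — which does not affect the $\bch$-membership argument above since that argument never used the second derivative, only $\dot h\in L^\infty$ or $L^2$. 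So strictly speaking I only need a $C^1$-type bound ($\|g_{m-1}\|_\infty + \|\dot g_{m-1}\|_\infty$) to run the next induction step, and that is manifestly controlled.

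The main obstacle is purely bookkeeping rather than conceptual: making sure the two characterizations of $\bch$ (the integral one for $H\ge 1/2$ and the Sobolev embedding for $H<1/2$) are quoted in a form strong enough to absorb a mere piecewise-$C^1$ path with corners, and tracking that the constant in (ii) genuinely depends only on the advertised quantities through the induction. A secondary subtlety is the case $H=1/2$, where $\bch = W_0^{1,2}$ exactly and the estimate is trivial, so it can be folded into either regime. I expect no real difficulty, only care with the constant's dependence.
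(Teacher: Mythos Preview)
Your argument for $H\le 1/2$ via the embedding $W_0^{1,2}\hookrightarrow\bch$ matches the paper exactly. The gap is in the case $H>1/2$.

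The characterization you invoke is not the Cameron--Martin norm. The quantity $\int_0^T\int_0^T \dot h(s)\dot h(t)|s-t|^{2H-2}\,ds\,dt$ is (up to a constant) the norm $\|\dot h\|_{\mathcal H}^2$ in the \emph{reproducing kernel Hilbert space} $\mathcal H$, not the norm $\|h\|_{\bch}^2$ in the Cameron--Martin space. For $H>1/2$ the space $\mathcal H$ strictly contains $L^2$, so $\dot h$ bounded trivially forces $\dot h\in\mathcal H$; but $\bch=K_H(L^2)$ is strictly \emph{smaller} than $W_0^{1,2}$, and membership in $\bch$ requires that a weighted fractional derivative of order $H-\tfrac12$ of $\dot h$ lie in $L^2$. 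Merely bounding $\dot h$ in $L^\infty$ does not give this. Concretely, the isometry $\mathcal H\to\bch$ sends $\varphi$ to the function $t\mapsto\langle\mathbf 1_{[0,t]},\varphi\rangle_{\mathcal H}$, whose derivative is $\alpha_H\int_0^T\varphi(u)|t-u|^{2H-2}\,du$; so $\dot h$ and the element $\varphi\in\mathcal H$ representing $h$ differ by a fractional integration of order $2H-1$, and $\|\varphi\|_{\mathcal H}\neq\|\dot h\|_{\mathcal H}$ in general.

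What the paper actually does for $H>1/2$ is invert $K_H$ via the Decreusefond--\"Ust\"unel formula and show directly that
\[
\varphi(t)=t^{H-\frac12}D_{0+}^{H-\frac12}\bigl(s^{\frac12-H}h'(s)\bigr)(t)
\]
belongs to $L^2([0,T])$. This is where the $C^2$ hypothesis is genuinely used: on each smooth piece one needs $|h_k'(t)-h_k'(s)|\le C^2(h_k)\,|t-s|$ to control the singular integral $\int_0^t(h'(t)-h'(s))(t-s)^{-H-1/2}\,ds$, and a separate estimate handles the contribution across the junction $T_1$ (producing an $L^2$-integrable singularity $(t-T_1)^{1/2-H}$). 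Your remark that ``the argument never used the second derivative'' is therefore incorrect for $H>1/2$, and your proposed induction via $C^1$ bounds alone would not close. The paper sidesteps the induction entirely by noting that the $m$-piece case follows by the same direct splitting of the fractional-derivative integral.
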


\begin{proof}
In order to avoid any possible confusion in notation, in the proof we will write $h(s)$ instead of $h_s$ for a path. We also only prove the proposition for $m=2$. The general case follows from a similar argument. 

Let $h_1$ (respectively, $h_2$) be a smooth path over the interval $[0,T_1]$ (respectively, $[0,T_2]$).  First note that $h=h_1\sqcup h_2$ is in $W^{1,2}_0([0,T])$. When $H\leq 1/2$, by Lemma \ref{lem: continuous embedding when H<1/2}, it is clear that $h\in\bch_{[0,T]}$ and $\|h\|_{\bch([0,T])}$ is bounded by a constant only depending on $T_k, C^2(h_k)$, for $k=1,2$. 

In the following, we prove our  result for $H>1/2$.  Recall that $K_H$ is the isometry from $L^2[0,T]$ to $\bch([0,T])$ introduced in Section \ref{intro Malliavin calculus}.  According to Theorem 3.1 of \cite{DU97}, when $H>1/2$ one can expressed $K_H$ in terms of fractional calculus by
$$K_H\phi=C_H\cdot I_{0+}^1\left(t^{H-\frac{1}{2}}\cdot I_{0+}^{H-\frac{1}{2}}\left(s^{\frac{1}{2}-H}\phi(s)\right)(t)\right),\quad \phi(s)\in L^2([0,T]).$$
In order to show $h=h_1\sqcup h_2\in \bch([0,T])$ and bound its Cameron-Martin norm, by inverting the above identity, we only need to show 
\begin{align}\varphi(t)&=t^{H-\frac{1}{2}}D_{0+}^{H-\frac{1}{2}}\left(s^{\frac{1}{2}-H}h'(s)\right)(t)\nonumber\\
&=t^{H-\frac{1}{2}}\left(t^{1-2H}{{h'}}(t)+\left(H-\frac{1}{2}\right)\int_{0}^{t}\frac{t^{\frac{1}{2}-H}h'(t)-s^{\frac{1}{2}-H}h'(s)}{(t-s)^{H+\frac{1}{2}}}ds\right)\nonumber\\
&=I_1(t)+I_2(t)\in L^2([0,T]),\label{phi}
\end{align}
and bounded the $L^2$-norm of $I_1$ and $I_2$ in terms of $C^2(h_k), T_k; k=1,2.$
In the above
$$I_1(t)=t^{\frac{1}{2}-H}h'(t),$$
and
$$I_2(t)=\left(H-\frac{1}{2}\right)t^{H-\frac{1}{2}}\int_{0}^{t}\frac{t^{\frac{1}{2}-H}h'(t)-s^{\frac{1}{2}-H}h'(s)}{(t-s)^{H+\frac{1}{2}}}ds.$$

The estimate for  $\|I_1\|_{L^2;[0,T]}$ is trivial given that $h'(t)$ is uniformly bounded.  In order to estimate $I_2$, we divide our analysis according to $t\in[0,T_2]$ and $t\in(T_1,T]$. 

\medskip
\noindent\underline{\it Estimate of $I_2$ on $[0,T_1]$:} Note that $h(t)=h_1(t)$ when $t\in[0,T_1]$, hence
\begin{align*}
|I_2(t)|&=C_H \left|\int_0^t\frac{h_1'(t)-h'_1(s)}{(t-s)^{H+\frac{1}{2}}}ds+t^{H-\frac{1}{2}}\int_0^t\frac{(t^{\frac{1}{2}-H}-s^{\frac{1}{2}-H})h'_1(s)}{(t-s)^{H+\frac{1}{2}}}ds\right|\\
&\leq C_HC^2(h_1)\left(\int_0^t (t-s)^{\frac{1}{2}-H}ds+ t^{H-\frac{1}{2}}\left|\int_0^t\frac{(t^{\frac{1}{2}-H}-s^{\frac{1}{2}-H})}{(t-s)^{H+\frac{1}{2}}}ds\right|\right)\\
&=C_HC^2(h_1)\big(Q_1(t)+Q_2(t)\big),\quad\quad  \text{for}\ t\in[0,T_1].
\end{align*}
Elementary computation shows that $Q_1(t)\in L^2([0,T_1])$ and its $L^2([0,T_1])$-norm is bounded by a constant only depending on $C^2(h_1)$ and $T_1$. For $Q_2(t)$, we have, by a change of variable $s=ut$,
\begin{align*}
Q_2(t)= t^{\frac{1}{2}-H}\left|\int_0^1\frac{1-u^{\frac{1}{2}-H}}{(1-u)^{H+\frac{1}{2}}}du\right|=C_Ht^{\frac{1}{2}-H}\in L^2([0,T_1]).
\end{align*}
Apparently, we also have $\|Q_2\|_{L^2; [0,T_1]}$ being bounded by a constant only depending on $H$ and $T_1$.

To summarize, when $t\in[0,T_1]$, we can bound  $\|I_2\|_{L^2; [0,T_1]}$ by a constant only depending on $H, C^2(h_1)$ and $T_1$.

\medskip
\noindent\underline{\it Estimate of $I_2$ on $(T_1,T]$:}  In this case, we write
\begin{align*}
I_2(t)&=C_H\left( t^{H-\frac{1}{2}}\int_0^{T_1}\frac{t^{\frac{1}{2}-H}h'(t)-s^{\frac{1}{2}-H}h'(s)}{(t-s)^{H+\frac{1}{2}}}ds+t^{H-\frac{1}{2}}\int_{T_1}^{t}\frac{t^{\frac{1}{2}-H}h'(t)-s^{\frac{1}{2}-H}h'(s)}{(t-s)^{H+\frac{1}{2}}}ds\right)\\
&=C_H\big(J_1(t)+J_2(t)\big),\quad\quad \text{for}\ t\in(T_1,T].
\end{align*}

The estimate for $J_2$ is easy. Since both $s$ and $t$ are in $(T_1,T]$\,,
\begin{align*}
|J_2(t)|\leq C^2(h_2) t^{H-\frac{1}{2}}\int_{T_1}^t\frac{(t-s)}{(t-s)^{H+\frac{1}{2}}}ds=C_HC^2(h_2)t^{H-\frac{1}{2}}(t-T_1)^{\frac{3}{2}-H}.
\end{align*}
Therefore it is in  $L^2([T_1,T])$ with its $L^2$-norm bounded by a constant only depending on $H$, $C^2(h_2)$ and $T_1$.

The analysis for $J_1$ requires some more work, though not difficult.  Note that $|h'(s)|\leq C^2(h_1)$ for $s\in[0, T_1]$ and $|h'(t)|\leq C^2(h_2)$ for $t\in[T_1,T]$.  Thus
\begin{align}
|J_1(t)|&\leq t^{H-\frac{1}{2}}\left(C^2(h_2)C_{T_1}\int_0^{T_1}\frac{1}{(t-s)^{H+\frac{1}{2}}}ds+C^2(h_1)\int_0^{T_1}\frac{s^{\frac{1}{2}-H}}{(t-s)^{H+\frac{1}{2}}}ds\right)\nonumber\\
&\leq t^{H-\frac{1}{2}}\left[C^2(h_2)C_{T_1, H}\big((t-T_1)^{\frac{1}{2}-H}+t^{\frac{1}{2}-H}\big)\right.\nonumber\\
&\left.\quad\quad+C^2(h_1)\left(\int_0^{T_1/2}\frac{s^{\frac{1}{2}-H}}{(t-s)^{H+\frac{1}{2}}}ds+\int_{T_1/2}^{T_1}\frac{s^{\frac{1}{2}-H}}{(t-s)^{H+\frac{1}{2}}}ds\right)\right]\label{J1}.
\end{align}
Because $t>T_1$, we have
\begin{align}\label{second term J1}
\int_0^{T_1/2}\frac{s^{\frac{1}{2}-H}}{(t-s)^{H+\frac{1}{2}}}ds\leq C_{T_1},
\end{align}
for some constant $C_{T_1}>0$. Moreover,
\begin{align}
\int_{T_1/2}^{T_1}\frac{s^{\frac{1}{2}-H}}{(t-s)^{H+\frac{1}{2}}}ds&\leq C_{T_1}\int_{T_1/2}^{T_1}\frac{1}{(t-s)^{H+\frac{1}{2}}}ds\nonumber\\
&\leq C_{T_1}\int_{0}^{T_1}\frac{1}{(t-s)^{H+\frac{1}{2}}}ds\leq C_{T_1, H}\big((t-T_1)^{\frac{1}{2}-H}+t^{\frac{1}{2}-H}\big).\label{third term J1}
\end{align}
Plugging \eqref{second term J1} and \eqref{third term J1} into \eqref{J1}, we obtain, for $t\in(T_1,T]$,
\begin{align*}
|J_1(t)|\leq C_{T_1,H}\,t^{H-\frac{1}{2}}\bigg(C^2(h_1)+C^2(h_2)\bigg)\bigg((t-T_1)^{\frac{1}{2}-H}+t^{\frac{1}{2}-H}+1\bigg).
\end{align*}
It is now clear that $J_1(t)$ is in $L^2([T_1,T])$ with its corresponding $L^2$ bounded above by a constant only depending on $H$ and $C^2(h_k), T_k; k=1,2.$

Based on our analysis on both $J_1$ and $J_2$ above, we conclude that $I_2$ is in $L^2([T_1,T])$ with its corresponding $L^2$-norm bounded by a constant depending only on $H$ and $C^2(h_k), T_k; k=1,2.$

Now that we have finishes our analysis on $I_1$ and $I_2$ in \eqref{phi}, the proof is completed.

\end{proof}

\begin{lemma}\label{lem: CM scaling}
Let $0<T_1<T_2$, and $H\in(0,1)$. Given  $h\in\bar{\mathcal{H}}([0,T_1])$, define $$\tilde{h}_t =h_{T_1 t/T_2}, \quad\quad0\leqslant t\leqslant T_2.$$ Then $\tilde{h}\in\bar{\mathcal{H}}[0,T_2]$, and 
\begin{align*}
\|\tilde{h}\|_{\bar{\mathcal{H}}([0,T_{2}])}=\left(\frac{T_{1}}{T_{2}}\right)^{H}\|h\|_{\bar{{\mathcal H}}([0,T_{1}])}.
\end{align*}
\end{lemma}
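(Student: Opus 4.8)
The plan is to deduce the scaling identity from the self-similarity of fractional Brownian motion together with the characterization of $\bch([0,T])$ as the reproducing kernel Hilbert space (RKHS) of the centered Gaussian process $(B_t)_{0\le t\le T}$, whose reproducing kernel is the restriction of $R(s,t)$ to $[0,T]^2$. The only structural input we need about $R$ is the scaling relation $R(cs,ct)=c^{2H}R(s,t)$ for every $c>0$, which is immediate from the explicit form of $R$ (each term scales by $c^{2H}$). We set $c=T_1/T_2\in(0,1)$ and $\phi:[0,T_2]\to[0,T_1]$, $\phi(t)=ct$.

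First I would record the following standard transformation rule for RKHSs under a deterministic time change composed with a scalar dilation. If $K_1$ is a positive definite kernel on $I_1\times I_1$ with RKHS $\mathcal H_1$, if $\phi:I_2\to I_1$ is a bijection and $a\in\mr\setminus\{0\}$, and if $K_2(s,t):=a^2K_1(\phi(s),\phi(t))$ on $I_2\times I_2$, then the RKHS $\mathcal H_2$ of $K_2$ is
\[
\mathcal H_2=\{\, t\mapsto a\,g(\phi(t))\ :\ g\in\mathcal H_1\,\},\qquad \|a\,(g\circ\phi)\|_{\mathcal H_2}=\|g\|_{\mathcal H_1}.
\]
This is checked on the dense span of kernel sections: the map $g\mapsto a\,(g\circ\phi)$ carries $K_1(\cdot,\phi(t))$ to $a^{-1}K_2(\cdot,t)$, is isometric on finite linear combinations of such sections because $\langle K_1(\cdot,\phi(s)),K_1(\cdot,\phi(t))\rangle_{\mathcal H_1}=K_1(\phi(s),\phi(t))=a^{-2}K_2(s,t)$, and hence extends to a Hilbert space isomorphism; the reproducing property of the image space then follows by bilinearity. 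One could instead argue entirely within Malliavin calculus, using the isometry $K_H:L^2([0,T])\to\bch([0,T])$, the scaling $K_H(ct,cs)=c^{H-1/2}K_H(t,s)$ of the kernel, and $\|\psi(c\,\cdot)\|_{L^2([0,T_2])}=c^{-1/2}\|\psi\|_{L^2([0,T_1])}$; but that requires separating the cases $H\le 1/2$ and $H>1/2$, whereas the RKHS argument is uniform in $H$.

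Next I would apply this with $K_1=R|_{[0,T_1]^2}$ (so $\mathcal H_1=\bch([0,T_1])$), $K_2=R|_{[0,T_2]^2}$ (so $\mathcal H_2=\bch([0,T_2])$), $\phi(t)=ct$, and $a=c^{-H}$: indeed $R(s,t)=c^{-2H}R(cs,ct)=a^2K_1(\phi(s),\phi(t))$ for $s,t\in[0,T_2]$, so the hypothesis is met. The rule gives
\[
\bch([0,T_2])=\{\, t\mapsto c^{-H} g(ct)\ :\ g\in\bch([0,T_1])\,\},\qquad \|c^{-H}(g\circ\phi)\|_{\bch([0,T_2])}=\|g\|_{\bch([0,T_1])}.
\]
Finally, given $h\in\bch([0,T_1])$, the function $t\mapsto c^{-H}h(ct)$ lies in $\bch([0,T_2])$ with norm $\|h\|_{\bch([0,T_1])}$; since $\tilde h_t=h_{T_1t/T_2}=h(ct)=c^{H}\big(c^{-H}h(ct)\big)$, homogeneity of the norm yields $\tilde h\in\bch([0,T_2])$ and $\|\tilde h\|_{\bch([0,T_2])}=c^{H}\|h\|_{\bch([0,T_1])}=(T_1/T_2)^{H}\|h\|_{\bch([0,T_1])}$, as claimed. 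There is no serious obstacle; the only point requiring care is the bookkeeping of the scalar $a=c^{-H}$ produced by self-similarity against the pure time change $\phi$, so that the final exponent comes out as $+H$ rather than $-H$.
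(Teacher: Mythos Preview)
Your proof is correct and takes essentially the same approach as the paper: both arguments exploit the scaling identity $R(cs,ct)=c^{2H}R(s,t)$ (i.e.\ self-similarity of fBm) together with the RKHS characterization of $\bch([0,T])$, verify the norm relation on kernel sections $R(\cdot,u)$, and then extend by linearity and density. The only difference is packaging: you state a general RKHS transformation rule under a bijective time change plus scalar dilation and then specialize, whereas the paper computes directly on a single kernel section $h_t=\me B_tB_u$ and invokes density at the end.
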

\begin{proof}
The lemma is proved in \cite[Lemma 4.3]{OTG19} for $H>1/2$.  Here we provide an intrinsic proof that works for all $H\in(0,1)$.

Fix any $u\in[0,T_1]$ and set
\begin{align}\label{represent h}h_t=\me B_tB_u,\quad \quad t\in[0,T_1].\end{align}
It is clear that $h\in\bch([0,T_1])$ and $\|h\|^2_{\bch([0,T_1])}=\me B_u^2$. In this case, by the self-similarity of fBm, we have
\begin{align*}
\tilde{h}_t=\me B_{T_1t/T_2}B_u=\left(\frac{T_1}{T_2}\right)^{2H}\me B_tB_{uT_2/T_1},\quad\quad t\in[0,T_2].
\end{align*}
Therefore, 
$$\|\tilde{h}\|^2_{\bch([0,T_2])}=\me\left[\left(\frac{T_1}{T_2}\right)^{2H}B_{uT_2/T_1}\right]^2=\left(\frac{T_1}{T_2}\right)^{2H}\|h\|^2_{\bch([0,T_1])}.$$
The proof of the lemma then follows from the fact that $h$ of the form in \eqref{represent h} form a dense subset of $\bch([0,T_1])$.
\end{proof}

\subsection{Positivity of density}
We first introduce some notations in order to state a general criterion for the positivity of density of a non-degenerate random vector $F=(F^1,...,F^n)$. 

For a given element $\underline{\ell}=(\ell_1,\ldots,\ell_n)\in\bch^n([0,1])$ and a vector $z\in\mr^n$, the shifted fractional Brownian motion is given by
$$T_z^{\underline{\ell}}B=B+\sum_{j=1}^nz_jl_j.$$
For any multi-index $\alpha=(\alpha_1,\ldots,\alpha_k)$ lying in $\{1,2,\ldots,n\}^k$, let $\underline{\ell}_\alpha=(\ell_{\alpha _1},\ldots,\ell_{\alpha _k})$ and define
$$R_{\underline{\ell}_\alpha,p}F=\int_{\{|z|\leq1\}}    {_{\ch^{\otimes k}}}\left\langle(\mathbf{D}^kF)(T_z^{\underline{\ell}}B), \ell_{\alpha _1}\otimes\cdot\cdot\cdot\otimes \ell_{\alpha _k}\right\rangle^p_{\bch ^{\otimes k}}dz,$$
for some $p>n$ and multi-index $\alpha$ with $|\alpha|=k\geq 0$.

The following criterion is borrowed from \cite[Theorem 3.1]{BNOT16}, and summarizes the content of Section 4.2 of \cite{B-N Saint-Flour}.

\begin{theorem}\label{th: positivity-sufficient}
Let $F=(F^1,\ldots,F^n)$ be a non-degenerate random variable and $\Upsilon:\bch([0,1]) \to\mr^n$ a $\mathcal{C}^{\infty}$ functional. Suppose that the following conditions hold:
\begin{itemize}
\item[a.] For any $h\in\bch([0,1]) $ there exists a sequence of measurable transformations $T^h_N: \Omega\to\Omega$ such that $\mp\circ(T^h_N)^{-1}$ is absolutely continuous with respect to $\mp$; 
\item[b.] Let $\{\mathbf{D}\Upsilon^{j}(h) ; \, j=1,\ldots, n\}$ be the coordinates of $\mathbf{D}\Upsilon(h)$ in $\mr^{n}$, and set 
$$\underline{\ell}=(\mathbf{D}\Upsilon^1(h),\ldots,\mathbf{D}\Upsilon^n(h)).$$ Suppose that for every $\varepsilon>0$:
\begin{enumerate}
\item $\lim_{N\to\infty}\mp\{|F\circ T^h_N-\Upsilon(h)|>\varepsilon\}=0$;
\item $\lim_{N\to\infty}\mp\{\|(\mathbf{D}F)\circ T^h_N- (\mathbf{D}\Upsilon)(h)\|_{\ch }>\varepsilon\}=0$; and
\item $\lim_{M\to\infty}\sup_{N}\mp\{(R_{\underline{\ell}_\alpha,p}F)\circ T^h_N>M\}=0$ for some $p>n$ and all multi-index $\alpha$ with $|\alpha|=0,1,2,3.$
\end{enumerate}
\item[c.] Finally, for a fixed $y\in\mr^n$ assume that there exists  an $h\in\bch([0,1]) $ such that $\Upsilon(h)=y$ and for the deterministic Malliavin matrix $\gamma_{\Upsilon}(h)$ of $\Upsilon$ at $h$, one has $\det \gamma_{\Upsilon}(h)>0$. 
\end{itemize}
Then the density of $F$ at $y$ satisfies $p(y)>0$.
\end{theorem}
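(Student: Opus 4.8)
The plan is to prove this along the lines of Section~4.2 of \cite{B-N Saint-Flour} (see also \cite[Theorem~3.1]{BNOT16}): combine the Malliavin integration-by-parts formula for the density of a non-degenerate vector with a quantitative inverse-function argument, applied to the random field obtained by perturbing $B$ in the Cameron--Martin directions $\underline{\ell}=\mathbf{D}\Upsilon(h)$. First I would record that, $F$ being non-degenerate, its (continuous) density has, for $x$ near $y$, the representation $p_F(x)=\mathbb{E}[\mathbf{1}_{\{F\ge x\}}\,H_{(1,\ldots,n)}(F-x,1)]$, where $H_{(1,\ldots,n)}(\,\cdot\,,1)$ is the iterated Skorohod integral built from $\gamma_F^{-1}$, $\mathbf{D}F$, $\mathbf{D}^2F$ and $\mathbf{D}^3F$ (see \cite{Nualart06}). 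In fact it is enough --- and cleaner --- to bound $p_F$ from below \emph{uniformly on a fixed small ball} $B(y,\delta_0)$, which in particular contains $y$.

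For each large $N$ I would work on the event $A_N$ on which $|F\circ T^h_N-\Upsilon(h)|<\varepsilon$, $\|\mathbf{D}F\circ T^h_N-\mathbf{D}\Upsilon(h)\|_{\mathcal H}<\varepsilon$, and $(R_{\underline{\ell}_\alpha,p}F)\circ T^h_N<M$ for all multi-indices with $|\alpha|\le 3$. By hypotheses (b.1)--(b.3), $\mathbb{P}(A_N)\ge 1-\eta$ once $N$ is large, with $\eta>0$ as small as we like. On $A_N$ the random map $\Phi_N(z):=F\big(T^{\underline{\ell}}_z(T^h_NB)\big)$, $|z|\le 1$, has its $z$-derivatives of orders $1,2,3$ given precisely by the pairings of $\mathbf{D}F$, $\mathbf{D}^2F$, $\mathbf{D}^3F$ (evaluated at the shifted argument) against the $\ell_j$'s, so that $\|\Phi_N\|_{W^{3,p}(|z|\le 1)}\le C(n,M)$; since $p>n$, the Sobolev embedding $W^{3,p}\hookrightarrow C^2$ turns this into a uniform $C^2$-bound. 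Moreover $\Phi_N(0)=F\circ T^h_N$ lies within $\varepsilon$ of $y$, while the Jacobian $D\Phi_N(0)=\big(\langle(\mathbf{D}F)\circ T^h_N,\ell_j\rangle\big)_j$ is within $C\varepsilon$ of $\big(\langle\mathbf{D}\Upsilon^i(h),\mathbf{D}\Upsilon^j(h)\rangle\big)_{i,j}=\gamma_{\Upsilon}(h)$, which is invertible by (c).

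Choosing $\varepsilon$ small (depending only on $\gamma_\Upsilon(h)$ and $M$), the quantitative inverse function theorem --- fed with the uniform $C^2$-bound --- shows that on $A_N$ the map $\Phi_N$ restricts to a diffeomorphism of a fixed ball $\{|z|\le r_0\}$ onto an open set containing a fixed ball $B(y,\delta_0)$, with $|\det D\Phi_N|$ bounded above and below by positive constants; crucially, $r_0$, $\delta_0$ and these constants depend only on $\gamma_\Upsilon(h)$, $M$ and $n$, not on $N$. Then I would transfer this to $p_F$: applying the Cameron--Martin theorem to the finite-dimensional shifts $B\mapsto T^{\underline{\ell}}_zB$, $|z|\le r_0$, averaged against a smooth positive density in $z$, and using the absolute continuity of $\mathbb{P}\circ(T^h_N)^{-1}$ with respect to $\mathbb{P}$ from (a), one arrives at an estimate of the form $p_F(x)\ge c\,\mathbb{P}(A_N)-\mathrm{error}_N$ for every $x\in B(y,\delta_0)$, with $c>0$ fixed and $\mathrm{error}_N\to 0$. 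Letting $N\to\infty$ and then $\eta\downarrow 0$ gives $p_F\ge c/2>0$ on $B(y,\delta_0)$, hence $p(y)=p_F(y)>0$.

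The hard part is this last transfer step: one must simultaneously carry the Girsanov densities of the finite-dimensional Cameron--Martin shifts and the Radon--Nikodym density of $T^h_N$, check that their product stays bounded below on $A_N$ uniformly in $N$, and verify that the contributions coming from $\Omega\setminus A_N$ and from the region where $\Phi_N$ is not diffeomorphic are genuinely negligible as $N\to\infty$ --- the bookkeeping carried out in detail in Section~4.2 of \cite{B-N Saint-Flour}. A secondary technical point is the passage from the abstract Malliavin integrand $H_{(1,\ldots,n)}(F-x,1)$ to an expression involving only Malliavin derivatives up to order $3$, so that exactly the quantities $R_{\underline{\ell}_\alpha,p}F$ with $|\alpha|\le 3$ enter; this is also where the requirements $p>n$ and ``$|\alpha|=0,1,2,3$'' in hypothesis (b.3) come from.
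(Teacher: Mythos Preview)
The paper does not supply its own proof of this theorem: it is explicitly ``borrowed from \cite[Theorem~3.1]{BNOT16}, and summarizes the content of Section~4.2 of \cite{B-N Saint-Flour}''. Your sketch is precisely that approach --- perturb $F$ along the Cameron--Martin directions $\underline{\ell}=\mathbf{D}\Upsilon(h)$, use the $W^{3,p}\hookrightarrow C^2$ embedding (with $p>n$) together with the bounds from (b.3) to apply a quantitative inverse-function theorem to $\Phi_N$, and then pull back via the absolute continuity in (a) --- so there is nothing to compare: you have reproduced the intended argument, and your identification of the ``hard part'' (the bookkeeping in the transfer step) matches where the work lies in \cite{B-N Saint-Flour}.
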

Several remarks regarding the above theorem are in order and listed below.
\begin{remark}\label{def of T^h_N}
We denote by $\{\Pi^N, N\geq 1\}$ a sequence projections from $\Omega$ to $\bch([0,1])$ of finite-dimensional range, which converges strongly to the identity. For any $h\in\bch([0,1])$, we simply define $T_N^h$ by 
$$T_N^h(B)=B-\Pi^NB+h.$$
The existence of such a sequence of projections $\Pi^N$ so that $\mp\circ(T^h_N)^{-1}$ is absolutely continuous with respect to $\mp$ is proved in Corollary 2.8 of \cite{BNOT16}. Hence, item (a) in Theorem \ref{th: positivity-sufficient} is satisfied. 
\end{remark}

\begin{remark}\label{regarding item b}
Recall that $U_t$ is the log-signature process that satisfies the SDE \eqref{SDE u}.  We aim to apply Theorem \ref{th: positivity-sufficient} to $F=U_1$.   A natural choice of $\Upsilon$ is $\Upsilon(\cdot)=\Psi(\cdot)_1$, where  $\Psi$ is the It\^{o}-Lyon maps associated to \eqref{SDE u}. With such choice of $F$, $\Upsilon$ and $T^h_N$ given in Remark \ref{def of T^h_N}, it is proved in Theorem 1.4 of \cite{BNOT16} that item (b) of Theorem \ref{th: positivity-sufficient} is satisfied for equation \eqref{SDE u} if the vector fields $V_i$'s are $C^\infty$-bounded. Although in our current situation, $V_i$'s are not bounded (of polynomial order), it is not hard to check that the same argument also work. Indeed, it is an simple consequence of the following facts:
\begin{itemize}
\item[(i)]  Denote by $\bar{\mathbf{B}}^N$ the signature of\, $T^h_N(B)=B-\Pi^N B+h$, and $\mathbf{h}$ the signature of $h$. it can be shown that for all $q>1$, we have $d_{p-var;[0,1]}(\bar{\mathbf{B}}^N, \mathbf{h})\to 0$ in $L^q(\mp)$, as $N\to\infty.$
\item[(ii)] Components of $U_1\circ T^h_N(B)=\log S_N(T^h_N(N))_1$ are simply polynomials of components of the signature of $T_N^h(B)$.
\end{itemize}
\end{remark}



 \begin{remark}\label{regarding item c}
Item (c) in Theorem \ref{th: positivity-sufficient} says that one should have an $h\in \bch([0,1])$ such that $\Upsilon(h)=y$ and  $\Upsilon$ is a submersion at $h$.
\end{remark}

According to Theorem \ref{th: positivity-sufficient}, and thanks to Remark \ref{regarding item b} and Remark \ref{regarding item c}, in order to show the positivity of the density $p_1(u)$ of $U_1$, it suffices to show that for any $u\in\mr^n$, there exists an $h\in\bch([0,1])$ such that $\Psi(h)_1=u$ and $\Psi(\cdot)_1$ is a submersion at $h$.  This will be the main content of the rest of this section.

We start our discussion on $\hG_N(\mr^d)$. Let $\bch_0([0,1])$ be the space of piecewise linear functions from $[0,1]$ to $\mr^d$. For any $h\in\bch_0([0,1])$, denote by$S_N(h)_t$ the signature process of $h$ on the free Nilpotent group of level $N$ over $\mr^d$. That is,
$$S_N(h)_t=\left(1,\int_0^t dh_s, \int_{0\leq s_1<s_2\leq t} dh_{s_1}\otimes dh_{s_2},...,\int_{0\leq s_1<\cdots<s_N\leq t}dh_{s_1}\otimes\cdots\otimes dh_{s_N}\right)\in \hG_N(\mr^d). $$
Chow's theorem states that 
$$\left\{S_N(h)_1, h\in\bch_0([0,1])\right\}=\hG_N(\mr^d).$$
Moreover, by Proposition \ref {Prop: concatenation} and Lemma \ref{lem: CM scaling}, we know that $\bch_0([0,1])\subset\bch([0,1])$ for all $H\in(0,1)$.

\begin{theorem}\label{immersion} For any $g\in \hG_N(\mr^d)$, we can find a path $h\in\bch([0,1])$ such that $S_N(h)_1=g$ and the map $S_N(\cdot)_1: \bch\to \hG_N(\mr^d)$ is non-degenerate at $h$.
\end{theorem}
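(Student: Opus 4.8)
The plan is to first produce a single piecewise linear path at which the endpoint signature map has surjective differential, and then to use the multiplicativity of the signature (Chen's identity) together with right translations to slide the base point onto an arbitrary $g\in\hG_N(\mr^d)$.

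\emph{Step 1: a model non-degenerate point.} For a partition $0=s_0<\cdots<s_m=1$ and $v=(v_1,\dots,v_m)\in(\mr^d)^m$, let $\phi_v$ denote the piecewise linear path with increment $v_k$ on $[s_{k-1},s_k]$. The signature of a linear segment with increment $w$ is the truncated tensor exponential $\exp(w)=\sum_{k=0}^N w^{\otimes k}/k!$, so by Chen's identity $S_N(\phi_v)_1=\exp(v_1)\otimes\cdots\otimes\exp(v_m)=:F_m(v)$; thus $F_m\colon(\mr^d)^m\to\hG_N(\mr^d)$ is a polynomial, hence $C^\infty$, map. By Chow's theorem $\bigcup_{m\ge1}\mathrm{Im}\,F_m=\hG_N(\mr^d)$, a set of positive Haar measure; if no $F_m$ had a regular point, then by Sard's theorem every $\mathrm{Im}\,F_m$, and hence their union, would be Lebesgue-null, a contradiction. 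So I would fix $m$ for which $F_m$ has a regular point $v^\ast$, set $c=F_m(v^\ast)$, and let $\phi^\ast=\phi_{v^\ast}$, which lies in $\bch_0([0,1])\subset\bch([0,1])$ by Proposition~\ref{Prop: concatenation} and Lemma~\ref{lem: CM scaling}. Perturbing $\phi^\ast$ within piecewise linear paths subordinate to the same partition realizes all variations of the increment vector, and for such a perturbation $w$ one has $D\,S_N(\cdot)_1\big|_{\phi^\ast}(w)=dF_m\big|_{v^\ast}(\Delta w)$, where $\Delta w\in(\mr^d)^m$ collects the increments of $w$. Since $dF_m|_{v^\ast}$ has full rank $n=\dim\hG_N(\mr^d)$, the differential $D\,S_N(\cdot)_1|_{\phi^\ast}\colon\bch([0,1])\to T_c\hG_N(\mr^d)$ is already onto on this finite-dimensional subspace, i.e.\ $S_N(\cdot)_1$ is non-degenerate (a submersion) at $\phi^\ast$.

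\emph{Step 2: transport to an arbitrary target.} Given $g\in\hG_N(\mr^d)$, I would apply Chow's theorem again to choose a piecewise linear $\psi$ with $S_N(\psi)_1=c^{-1}\otimes g$, and set $h=\phi^\ast\sqcup\psi$. By Proposition~\ref{Prop: concatenation} this concatenation is a Cameron--Martin path, and rescaling time back to $[0,1]$ via Lemma~\ref{lem: CM scaling} gives $h\in\bch([0,1])$. Chen's identity yields $S_N(h)_1=S_N(\phi^\ast)_1\otimes S_N(\psi)_1=c\otimes c^{-1}\otimes g=g$. Perturbing only the first block (extending $w$ by a constant on the $\psi$-block), Chen's identity gives $S_N\big((\phi^\ast+\eps w)\sqcup\psi\big)_1=S_N(\phi^\ast+\eps w)_1\otimes(c^{-1}\otimes g)$, so
\[
D\,S_N(\cdot)_1\big|_h(w)=dR_{c^{-1}g}\big(D\,S_N(\cdot)_1\big|_{\phi^\ast}(w)\big),
\]
where $R_{c^{-1}g}$ is right translation on $\hG_N(\mr^d)$, a diffeomorphism. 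Since $D\,S_N(\cdot)_1|_{\phi^\ast}$ is onto $T_c\hG_N(\mr^d)$ by Step~1 and $dR_{c^{-1}g}\colon T_c\hG_N(\mr^d)\to T_g\hG_N(\mr^d)$ is a linear isomorphism, $D\,S_N(\cdot)_1|_h$ is onto $T_g\hG_N(\mr^d)$. Thus $S_N(h)_1=g$ and $S_N(\cdot)_1$ is non-degenerate at $h$, as claimed; equivalently, the deterministic Malliavin matrix of $S_N(\cdot)_1$ at $h$ is invertible.

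The genuinely substantive point is Step~1 — the existence of one path with full-rank differential. This is exactly where the free step-$N$ nilpotent structure is used: the left-invariant fields $W_1,\dots,W_d$ are bracket-generating because iterated brackets of $e_1,\dots,e_d$ exhaust $\mathfrak g_N(\mr^d)$ by freeness, and that is what forces the polynomial endpoint map $F_m$ to be a submersion somewhere. Everything else — that concatenations and time changes of piecewise linear paths remain in $\bch$, and that $S_N(\cdot)_1$ is Fréchet differentiable on piecewise linear paths with differential $dF_m$ — is routine given Proposition~\ref{Prop: concatenation}, Lemma~\ref{lem: CM scaling}, and multiplicativity of the signature.
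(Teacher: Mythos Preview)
Your proof is correct and follows the same two-step skeleton as the paper --- first locate a single path at which $S_N(\cdot)_1$ is a submersion, then transport to an arbitrary $g$ via Chen's identity and a group translation --- but the way you carry out each step is genuinely different.

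In Step~1, the paper cites a sub-Riemannian geometry lemma (Lemma~3.32 of \cite{ABB2012}) to produce indices $i_1,\dots,i_n$ and parameters $\bar s$ so that $(s_1,\dots,s_n)\mapsto e^{s_1 e_{i_1}}\otimes\cdots\otimes e^{s_n e_{i_n}}$ is a local diffeomorphism at $\bar s$; your Sard-theorem argument on the polynomial maps $F_m$ achieves the same conclusion without the external reference and is arguably cleaner. In Step~2, the paper writes $g=g_1\otimes g_0$ and concatenates $h_1\sqcup h_0$ (non-degenerate block last, so the differential factors through \emph{left} translation by $g_1$), and it chooses the connecting piece $h_1$ to be a smooth path supplied by Lemma~\ref{lem: quasi-inverse of signature map}; you reverse the order (non-degenerate block first, using \emph{right} translation) and take the connecting piece merely piecewise linear via Chow's theorem. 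Both orderings work since left and right translations are diffeomorphisms, and in either case the needed perturbation directions are piecewise linear, hence lie in $\bch([0,1])$ by Proposition~\ref{Prop: concatenation} and Lemma~\ref{lem: CM scaling}.

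One trade-off worth noting: the paper's choice of a smooth connecting path with controlled $C^2$-norm (Lemma~\ref{lem: quasi-inverse of signature map}) is what later allows the quantitative bound $\|h\|_{\bch}\le C_M$ uniformly over $\|g\|_{\textsc{CC}}\le M$ (Remark~\ref{connecting by nondegenerate path}), which feeds into the equivalence $\tilde d_R\asymp\|\cdot\|_{\textsc{CC}}$. Your Chow-based connecting path does not immediately give such uniform control, so if you want to recover that remark you would need to upgrade the connecting piece accordingly. For the theorem as stated, however, your argument is complete.
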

\begin{proof}
As in the proof of Proposition \ref{Prop: concatenation}, in order to avoid any possible confusion in notation, we will use $h(s)$ instead of $h_s$ for a path throughout this proof.

Let $n=\mathrm{dim}\,\hG_N(\mr^d)$. It suffices to show that for any $g\in \hG_N(\mr^d)$ there exists an $h\in\bch([0,1])$ and $n$ families of paths $\{h^{k,\epsilon}_\cdot\in\bch([0,1]), \epsilon\in[0,1]\}, k=1,...,n,$ such that 
\begin{itemize}
\item [(1)] $S_N(h)_1=g$;
\item [(2)] $h^{k,0}=h$ for all $k=1,...,n$;
\item [(3)] $\frac{d h^{k,\epsilon}}{d\epsilon}\big|_{\epsilon=0}\in\bch([0,1])$ for all $k=1,...,n$;  and
\item [(4)] $\left\{\frac{dS_N(h^{k,\epsilon})_1}{d\epsilon}\big|_{\epsilon=0}, \ k=1,...,n\right\}$ spans the tangent space of $\hG_N(\mr^d)$ at $g$; that is, they are linearly independent. 
\end{itemize}

We divide the proof into two steps.

\noindent{\it Step 1:} We show that the above is true for at least one $g_0\in G_N(\mr^d)$. Let $\{e_1,...,e_d\}$ be the standard basis of $\mr^d$, and recall $n=\mathrm{dim}\, G_N(\mr^d)$.  By Lemma 3.32 of \cite{ABB2012}, there exists $e_{i_1},..., e_{i_n}$ and $\bar{s}_1,..., \bar{s}_n\in\mr$ such that the map
$$\phi: \mr^n\to G_N(\mr^d),\quad \phi(s_1,...,s_n)=e^{s_1e_{i_1}}\otimes\cdots\otimes e^{s_ne_{i_n}},$$
is non-degenerate at $\bar{s}=(\bar{s}_1,...,\bar{s}_n)$. 

Set $g_0=\phi(\bar{s}_1,...,\bar{s}_n)$, and let $\gamma_k$ be the straight line in $\mr^d$ over the time interval $[0,1/n]$ whose tangent is $n\bar{s}_ke_{i_k}; k=1,...,n$. Define $$h_0=\gamma_1\sqcup \gamma_2\sqcup\cdots\sqcup \gamma_n\in\bch_0([0,1]),$$ the concatenation of $\gamma_k; k=1,...,n$. Moreover, for each $1\leq k\leq n$, define $\gamma_k^\epsilon=(1+\epsilon)\gamma_k$ and construct $h_0^{k,\epsilon}$ the same way as $h_0$ but replacing $\gamma_k$ by $\gamma^\epsilon_k$ in the concatenation. It is then easy to check all the four properties are satisfied at this particular choice of $g_0\in \hG_N(\mr^d)$. Indeed, (1) and (2) follows easily by the very construction of $h_0$ and $h_0^{k,\epsilon}$. (4) is satisfied by the fact that
$$\left.\frac{dS_N(h_0^{k,\epsilon})_1}{d\epsilon}\right|_{\epsilon=0}=\frac{\partial\phi}{\partial s_k}\bigg|_{\bar{s}},$$
and that $\phi$ is non-degenerate at $\bar{s}$. For (3), note that for each $k=1,...,n,$
\begin{align*}
\left.\frac{d h_0^{k,\epsilon}}{d\epsilon}\right|_{\epsilon=0}(t)=\left\{
\begin{array}{lll}
0,\quad&\mathrm{if}\ t\in[0,(k-1)/n];\\
n(t-(k-1)/n)\bar{s}_ke_{i_k},&\mathrm{if}\ t\in[(k-1)/n, k/n];\\
\bar{s}_ke_{i_k},&\mathrm{if}\ t\in[k/n, 1].
\end{array}
\right.
\end{align*}
Clearly it is an element in $\bch_0([0,1])$ and hence in $\bch([0,1])$.  This finishes the proof of Step 1.

\noindent{\it Step 2:} With the help of Step 1, the proof for general $g\in \hG_N(\mr^d)$ relies on the group structure of $\hG_N(\mr^d)$. Indeed, for any $g\in \hG_N(\mr^d)$ we have 
$$g=g_1\otimes g_0,$$ for some $g_1\in \hG_N(\mr^d)$. By Lemma \ref{lem: quasi-inverse of signature map},  one can find a smooth path $h_1\in\bch([0,1])$ such that $g_1=S_N(h_1)_1$. Now let $\tilde{h}=h_1\sqcup h_0$ and $\tilde{h}^{k,\epsilon}=h_1\sqcup h_0^{k,\epsilon}$, and re-parametrize them back to interval $[0,1]$ by
$$h(s)=\bar{h}({2s}),\quad\text{and} \quad h^{k,\epsilon}(s)=\bar{h}^{k,\epsilon}({2s}), \quad s\in [0,1].$$
By Proposition \ref{Prop: concatenation},  $h$ and $h^{k,\epsilon}$ are paths in $\bch([0,1])$. Moreover, it is easy to check that they satisfy all the properties (1)-(4) in the above.
The proof is thus completed. 
\end{proof}

\begin{remark}\label{connecting by nondegenerate path}
Suppose $g\in \hG_N(\mr^d)$ with $\|g\|_{\textsc{CC}}\leq M$ for some constant $M$. Let ${h}$ be the path constructed in the proof of Theorem \ref{immersion} such that $S_N({h})_1=g$ and that $S_N(\cdot)_1: {\bch}([0,1])\to \hG_N(\mr^d)$ is non-degenerate at ${h}$. One can show  $$\|{h}\|_{{\bch}([0,1])}\leq C_M,$$ for some constant $C_M$ only depending on $M$. 

To this aim, first note that we only need to prove $\|\bar{h}\|_{\bch([0,2])}\leq C_M$ thanks to Lemma \ref{lem: CM scaling}.  By the construction of $\bar{h}$ in the proof of Theorem \ref{immersion}, we have
\begin{align}\label{bar h expression}\bar{h}=h_1\sqcup h_0,\end{align}
where $h_1$ is such that $S_N(h_1)_1=g_1=g\otimes g_0^{-1}$. Since $\|g\|_{\text{CC}}\leq M$ and $g_0$ is a fixed element, we have
$$\|g_1\|_{\textsc{CC}}\leq \|g\otimes g_0^{-1}\|_{\textsc{CC}}=\|g\|_{\textsc{CC}}+\|g_0\|_{\textsc{CC}}\leq M+\|g_0\|_{\textsc{CC}}\leq C_{1,M}.$$
Hence, by Lemma \ref{lem: quasi-inverse of signature map},
$$C^2(h_1)\leq C_{2,M},$$
for some constant only depending on $C_{1,M}$ (and hence only on $M$). On the other hand, observe that $h_0$ appearing in \eqref{bar h expression} is a fixed piecewise linear path. [It has $n$ pieces where $n=\dim \hG_N(\mr^d)$.] Hence, $\bar{h}$ is a concatenation of $n+1$ smooth paths whose first and second derivatives for each piece  are bounded by a constant only depending on $M$. Now Proposition \ref{Prop: concatenation} concludes that $\|\bar{h}\|_{{\bch}([0,1])}\leq C_M$.

\end{remark}

The following is an immediate corollary of Theorem \ref{immersion}.
\begin{theorem}\label{immersion Lie algebra} Recall that $\Psi$ is the It\^{o}-Lyons map associated to equation \eqref{SDE u}.  For any $u\in \mr^n$, we can find a path $h\in\bch([0,1])$ such that $\Psi(h)_1=u$ and the map $\Psi(\cdot)_1: \bch\to \mr^n$ is non-degenerate at $h$.
\end{theorem}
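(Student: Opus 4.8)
The plan is to reduce the statement to Theorem~\ref{immersion} by factoring $\Psi(\cdot)_1$ through the signature map and a fixed diffeomorphism. Recall from the construction of the log-signature that
\[
\Phi:\hG_N(\mr^d)\longrightarrow\mr^n,\qquad \Phi(g)=[\exp^{-1}(g)]_{\mathcal{B}},
\]
is a diffeomorphism, and in fact a Lie group isomorphism from $\hG_N(\mr^d)$ onto $(\mr^n,\star)$ which conjugates the generating vector fields $W_i$ of the signature equation into the vector fields $V_i$ of \eqref{SDE u}. Since solutions of rough differential equations are pushed forward by smooth changes of coordinates, this yields
\[
\Psi(h)_t=\Phi\bigl(S_N(h)_t\bigr)\qquad\text{for every }h\in\bch([0,1]),
\]
which is just the path-level form of the identity $U_t=[\exp^{-1}(X_t)]_{\mathcal{B}}$. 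In particular $\Psi(\cdot)_1=\Phi\circ S_N(\cdot)_1$.

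Given $u\in\mr^n$, put $g=\Phi^{-1}(u)\in\hG_N(\mr^d)$ and apply Theorem~\ref{immersion}: there exist a path $h\in\bch([0,1])$ with $S_N(h)_1=g$ and families $\{h^{k,\epsilon}\in\bch([0,1]):\epsilon\in[0,1]\}_{k=1}^{n}$ satisfying properties (1)--(4) of that theorem. Then $\Psi(h)_1=\Phi(g)=u$, which is the first assertion. For the non-degeneracy, differentiating $\Psi(h^{k,\epsilon})_1=\Phi\bigl(S_N(h^{k,\epsilon})_1\bigr)$ at $\epsilon=0$ and applying the chain rule gives
\[
\left.\frac{d\,\Psi(h^{k,\epsilon})_1}{d\epsilon}\right|_{\epsilon=0}
= D\Phi_g\!\left(\left.\frac{d\,S_N(h^{k,\epsilon})_1}{d\epsilon}\right|_{\epsilon=0}\right),\qquad k=1,\dots,n.
\]
Since $\Phi$ is a diffeomorphism, $D\Phi_g$ is a linear isomorphism $T_g\hG_N(\mr^d)\to\mr^n$; by property~(4) the vectors on the right-hand side span $T_g\hG_N(\mr^d)$, hence their images span $\mr^n$. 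Thus the $n$ vectors $\frac{d}{d\epsilon}\big|_{\epsilon=0}\Psi(h^{k,\epsilon})_1$ are linearly independent, i.e.\ the deterministic Malliavin matrix $\gamma_\Psi(h)$ is invertible, which is exactly the non-degeneracy of $\Psi(\cdot)_1$ at $h$. (Equivalently: the composition of a submersion with a diffeomorphism is a submersion.)

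There is no substantial obstacle; the proof is a change of variables. The only points deserving comment are the validity of $\Psi(h)_t=\Phi(S_N(h)_t)$ for Cameron--Martin paths $h$ and the legitimacy of the $\epsilon$-differentiation. Both are standard: the first holds because $\Phi$ intertwines the $W_i$'s and $V_i$'s and the It\^o--Lyons solution map commutes with such coordinate changes, while the second is the Fr\'echet differentiability of the It\^o--Lyons map along Cameron--Martin directions, which was already invoked in the proof of Theorem~\ref{immersion} (see also Remark~\ref{regarding item b}).
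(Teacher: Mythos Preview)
Your proof is correct and follows exactly the same route as the paper: factor $\Psi(\cdot)_1$ as the composition of $S_N(\cdot)_1$ with the global diffeomorphism $[\exp^{-1}(\cdot)]_{\mathcal{B}}$ and invoke Theorem~\ref{immersion}. The paper's own proof is a two-line version of what you wrote, simply noting $\Psi(\cdot)_1=\varphi(S_N(\cdot)_1)$ for the global isomorphism $\varphi$ and deferring the rest to Theorem~\ref{immersion}.
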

\begin{proof}
Denote by $\varphi$ the global isomorphism from $\hG_N(\mr^d)$ to $\mr^n$.  Then $\Psi(\cdot)_1=\varphi (S_N(\cdot)_1 )$. The rest of the proof follows from Theorem \ref{immersion}.
\end{proof}

Thanks to Theorem \ref{th: positivity-sufficient}, Remark \ref{def of T^h_N}, Remark \ref{regarding item b}, Remark \ref{regarding item c}, and Theorem \ref{immersion Lie algebra}, we have proved the main result of this section.
\begin{theorem}\label{th: positivity of density}
Fix $H>1/4$. Let $U_t$ be the log-signature process of the fractional Brownian motion. Denote by $p_t(u)$ the density of $U_t$. We have
$$p_1(u)>0,\quad\text{for\ all}\ u\in \mathbb{R}^n.$$
\end{theorem}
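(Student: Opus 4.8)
The plan is to verify the hypotheses of the general positivity criterion, Theorem~\ref{th: positivity-sufficient}, for the random vector $F=U_1$ and the functional $\Upsilon(\cdot)=\Psi(\cdot)_1$, where $\Psi$ is the It\^o--Lyons map associated to equation~\eqref{SDE u}. Non-degeneracy of $F=U_1$ is already established in Theorem~\ref{integrability small e-v}. Item~(a) holds by the construction of $T^h_N(B)=B-\Pi^N B+h$ recalled in Remark~\ref{def of T^h_N}, and item~(b) is verified in Remark~\ref{regarding item b} (the boundedness of the $V_i$'s is not needed, since the components of $U_1\circ T^h_N(B)$ are polynomials in the signature of $T^h_N(B)$, whose $p$-variation distance to the signature of $h$ converges to zero in every $L^q$). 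Thus the whole burden is on item~(c): given an arbitrary $y\in\mathbb{R}^n$, one must produce $h\in\bch([0,1])$ with $\Upsilon(h)=y$ at which $\Upsilon$ is a submersion, i.e.\ $\det\gamma_\Upsilon(h)>0$.

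The key step, therefore, is precisely the content of Theorem~\ref{immersion Lie algebra}, which reduces via the fixed global isomorphism $\varphi:\hG_N(\mathbb{R}^d)\to\mathbb{R}^n$ to Theorem~\ref{immersion}: for any $g\in\hG_N(\mathbb{R}^d)$ there is $h\in\bch([0,1])$ with $S_N(h)_1=g$ at which the truncated-signature map $S_N(\cdot)_1:\bch([0,1])\to\hG_N(\mathbb{R}^d)$ is non-degenerate. The strategy for that is a two-step argument exploiting the group structure: first exhibit one base point $g_0$ where submersivity holds, using that by Lemma~3.32 of \cite{ABB2012} some iterated-exponential chart $\phi(s_1,\dots,s_n)=e^{s_1 e_{i_1}}\otimes\cdots\otimes e^{s_n e_{i_n}}$ is a local diffeomorphism at some $\bar s$; realizing $\phi$ by concatenating $n$ straight segments and varying their lengths gives $n$ directions in $\bch_0([0,1])$ whose images span $T_{g_0}\hG_N(\mathbb{R}^d)$. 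Then, for general $g$, write $g=g_1\otimes g_0$, pick via Lemma~\ref{lem: quasi-inverse of signature map} a smooth path $h_1$ with $S_N(h_1)_1=g_1$, and left-translate the perturbation family by concatenating $h_1$ with the base family; Proposition~\ref{Prop: concatenation} guarantees the concatenations lie in $\bch([0,1])$ and Lemma~\ref{lem: CM scaling} handles the reparametrization back to $[0,1]$. Since right multiplication by $g_0$ and the reparametrization are diffeomorphisms, submersivity is preserved.

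Having secured item~(c), the conclusion $p_1(u)>0$ for all $u\in\mathbb{R}^n$ follows directly from Theorem~\ref{th: positivity-sufficient}, and the corresponding strict positivity of the density of the signature $X_1$ on $\hG_N(\mathbb{R}^d)$ follows since the exponential map is a diffeomorphism and the Haar measure is the pushforward of Lebesgue measure. The main obstacle is the submersivity at the base point: one needs the existence of a product-of-one-parameter-subgroups chart that is a genuine local diffeomorphism onto $\hG_N(\mathbb{R}^d)$, which is where Lemma~3.32 of \cite{ABB2012} is essential; a naive choice of directions (e.g.\ only the generators $e_i$) would not span the higher strata, so the choice of the $e_{i_k}$ and $\bar s_k$ must be made with care. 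A secondary technical point, needed only for later sections but worth recording here (Remark~\ref{connecting by nondegenerate path}), is that the constructed $h$ can be taken with Cameron--Martin norm controlled by $\|g\|_{\textsc{CC}}$, which follows by combining the $C^2$-bound on $h_1$ from Lemma~\ref{lem: quasi-inverse of signature map} with the quantitative concatenation bound of Proposition~\ref{Prop: concatenation}.
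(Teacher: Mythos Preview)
Your proposal is correct and follows essentially the same route as the paper: the proof of Theorem~\ref{th: positivity of density} in the paper is literally a one-line citation of Theorem~\ref{th: positivity-sufficient} together with Remarks~\ref{def of T^h_N}--\ref{regarding item c} and Theorem~\ref{immersion Lie algebra}, and you have reproduced exactly this structure, including the two-step argument (base point via the iterated-exponential chart of \cite{ABB2012}, then left translation by concatenation) underlying Theorem~\ref{immersion}. One small slip: in Step~2 the map carrying the base family to the family at $g$ is \emph{left} multiplication by $g_1$ (since $S_N(h_1\sqcup h_0^{k,\epsilon})_1=g_1\otimes S_N(h_0^{k,\epsilon})_1$), not right multiplication by $g_0$; this does not affect the argument, as both are diffeomorphisms.
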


The strict positivity of $p_1(u)$ gives a sharp local lower bound of the density $p_t(u)$ of $U_t$.
\begin{corollary}
With the same notations in Theorem \ref{th: positivity of density}, we have, for some constant $c>0$,
\begin{align}\label{local lower bound}
p_t(u)\geq \frac{c}{t^{\nu/2}},\quad \text{for\ all}\ u\ \text{with}\ \|u\|_{\textsc{CC}}\leq t^H,
\end{align}
where 
$\nu=\sum_{i=1}^N i\dim(\mathcal{V})_i$ is the Hausdorff dimension of $\frak{g}_N(\mr^d)$.
\end{corollary}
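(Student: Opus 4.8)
The plan is to deduce the local lower bound from the strict positivity of the density at time $1$ (Theorem \ref{th: positivity of density}), combined with the scaling identity \eqref{density cov} and a compactness argument. By the self-similarity of the fractional Brownian motion one has $\triangle_{t^H}U_1\stackrel{law}{=}U_t$, hence for every $t>0$ and every $u\in\mathbb{R}^n$,
\[
p_t(u)=\frac{1}{t^{\nu/2}}\,p_1\!\left(\triangle_{t^{-H}}u\right),
\]
exactly as in \eqref{density cov}. So the whole problem is reduced to producing a positive lower bound for $p_1$ on the closed Carnot--Carath\'eodory unit ball $K=\{v\in\mathbb{R}^n:\ \|v\|_{\textsc{CC}}\le 1\}$.

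First I would observe that $K$ is compact in $\mathbb{R}^n$. Indeed, the homogeneous norm $\vertiii{\cdot}$ is continuous and proper (its sublevel sets are Euclidean-closed and Euclidean-bounded, since $\vertiii{u}\to\infty$ as $|u|\to\infty$), and by \eqref{equivalence of homo norm on g} it is equivalent to $\|\cdot\|_{\textsc{CC}}$; therefore $K$ is closed and bounded. Next, by Theorem \ref{integrability small e-v} the density $p_1$ is smooth, in particular continuous, and by Theorem \ref{th: positivity of density} it is strictly positive on all of $\mathbb{R}^n$. Consequently $p_1$ attains a strictly positive minimum on the compact set $K$, and we set
\[
c:=\min_{v\in K} p_1(v)>0.
\]

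Finally, if $\|u\|_{\textsc{CC}}\le t^H$, then by the dilation property of the Carnot--Carath\'eodory norm $\|\triangle_{t^{-H}}u\|_{\textsc{CC}}=t^{-H}\|u\|_{\textsc{CC}}\le 1$, so $\triangle_{t^{-H}}u\in K$, and the scaling identity yields
\[
p_t(u)=\frac{1}{t^{\nu/2}}\,p_1\!\left(\triangle_{t^{-H}}u\right)\ge \frac{c}{t^{\nu/2}},
\]
which is the claimed estimate. I do not expect any serious obstacle: the two substantive ingredients (the scaling relation and the strict positivity of $p_1$) are already in hand, and the only step needing a sentence of justification is the compactness of the Carnot--Carath\'eodory unit ball, which follows from the equivalence and properness of homogeneous norms on $\mathbb{R}^n$.
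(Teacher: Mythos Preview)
Your proof is correct and follows essentially the same route as the paper: both use the scaling identity \eqref{density cov} to reduce to bounding $p_1$ from below on the Carnot--Carath\'eodory unit ball, and then invoke continuity and strict positivity of $p_1$ on that compact set. You simply spell out the compactness of the unit ball a bit more explicitly than the paper does.
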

\begin{proof}
By \eqref{density cov}, we have for all $u\in \mathbb{R}^n$ with $\|u\|_{\textsc{CC}}\leq t^H$,
\begin{align*}
p_t(u)&=\frac{1}{t^{\nu/2}}p_1\left(\Delta_{t^{-H}}u\right)\\
&\geq \frac{1}{t^{\nu/2}}\inf\{p_1(u): u\in\mathbb{R}^n \  \text{and}\ \|u\|_{\text{CC}}\leq 1\}. 
\end{align*}
The fact that $c=\inf\{p_1(u): u\in \mathbb{R}^n \  \text{and}\ \|u\|_{\text{CC}}\leq 1\}>0$ follows from the continuity and strict positivity of $p_1(u)$.
\end{proof}

\bigskip
\section{Varadhan estimate} 
In this section, we establish the Varadhan estimate for the signature of fractional Brownian motions. Moreover, we will show that the controlling ``distances" that appear in the Varadhan estimate are both equivalent to the C-C distance. This is consistent with the results obtained in the previous sections for the upper and lower bounds of the density function.

\subsection{Varadhan estimate} 
We consider the log-signature process $U_t\in \mathbb{R}^n$. Recall that it satisfies a canonical SDE
\begin{align}\label{SDE varadhan estimate}U_t=\sum_{i=1}^d\int_0^tV_i(U_s)dB^i_s,\end{align}
where $V_i$ are smooth vector fields with polynomial growth. Denote by $\Psi$ the It\^{o}-Lyons map associated to the above equation.  Then we can write $U_t=\Psi(B)_t$. Define 
\begin{align}\label{def: d1}d(u)&=\inf\{ \|h\|_{\bch([0,1])}: h\in\bch([0,1]), \text{and}\ \Psi(h)_1=u\}.
\end{align}
This is the controlling `distance' between $u$ and $0$. Another `distance' of interest  in the Varadhan type estimate and of similar spirit is given as follows
\begin{align}\label{def: dr1}d_R(u)=\inf\{ \|h\|_{\bar\cH}: h\in\bar{\cH},  \Psi(h)_1=u, \text{and}\  \langle \D\Psi(h),\D\Psi(h)\rangle_{\cH} \text{is\ non-degenerate}.\}.\end{align}
{When an SDE of the form in \eqref{SDE varadhan estimate} is driven by a standard Brownian motion, it is shown in \cite[Theorem 1.1]{Leandre PTRF87} that , under strong H\"{o}rmander conditions, the above two controlling distances $d$ and $d_R$ are identical. The argument crucially relies on the $L^2$ structure of the Cameron-Martin space of the Brownian motion. It is not clear how it can be adapted in the case of fractional Brownian motions. However, we will show below that both $d$ and $d_R$ are  equivalent to the C-C distance.  }
\begin{remark}
When the vector fields in \eqref{SDE varadhan estimate} are uniformly elliptic, one can show that the two controlling distances $d$ and $d_R$ are always the same for a large class of Gaussian processes. We refer the interested reader to \cite[Lemma 4.7]{GOT18} for more details in this regard.
\end{remark}

Consider the following family of stochastic differential equations driven by $B$:
\begin{align}\label{u eps}U_t^\eps=\eps\sum_{i=1}^d\int_0^t V_i(U^\eps_s)dB_s^i, \quad \eps\in(0,1].\end{align}
Our main result of this section is the following.
\begin{theorem}\label{thm: varadhan estimate}
Let $p_\eps(u)$ be the density of $U_1^\eps$. Then
\begin{align}\label{varadhan lower bound}
\liminf_{\eps\downarrow 0} \eps^2\log p_\eps(u)\geq -\frac{1}{2}d^2_R(u),
\end{align}and
\begin{align}\label{varadhan upper bound}
\limsup_{\eps\downarrow0}\eps^2\log p_\eps(u)\leq -\frac{1}{2}d^2(u).
\end{align}
\end{theorem}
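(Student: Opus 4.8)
The plan is to follow the now-classical Malliavin-calculus route to Varadhan estimates for Gaussian rough differential equations, as developed in \cite{BOZ15, BNOT16, GOT18}, while keeping track of the fact that our vector fields $V_i$ are only of polynomial growth rather than $C^\infty$-bounded. The scaling $U_1^\eps \stackrel{law}{=} U^{\eps}_{1}$ is exactly the family in \eqref{u eps}, and by the self-similarity \eqref{global scaling 2} it is also equivalent to looking at $U_{\eps^{1/H}}$; however for the large deviation analysis it is cleaner to work directly with \eqref{u eps}. First I would record that the family $\{U_1^\eps\}$ satisfies a large deviation principle on $\mathbb{R}^n$ with good rate function $I(u)=\tfrac12 d^2(u)$, where $d$ is defined in \eqref{def: d1}; this is a standard consequence of the Schilder-type LDP for the rescaled fractional Brownian motion $\eps B$ in rough path topology (\cite{FV2010}, and the extension to $H>1/4$ in \cite{BOZ15, GOT18}) together with the continuity of the It\^o--Lyons map $\Psi$. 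The lower semicontinuity of $d$ and $d_R$, already noted in the introduction, is what makes these rate functions good.

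For the upper bound \eqref{varadhan upper bound}, I would combine the LDP with the uniform (in $\eps$) density estimates. Concretely, the preliminary Gaussian upper bound \eqref{upper bound 1}, applied to the rescaled equation, gives $p_\eps(u)\le C\eps^{-\beta'} \mp(U_1^\eps \ge u)^{1/2}\,\|\gamma_1(U^\eps)^{-1}\|^n\,\|\D U_1^\eps\|^n$ type control with constants uniform in $\eps\in(0,1]$ — the uniformity is precisely what Theorem \ref{integrability small e-v} and Proposition \ref{integrality JJ} were set up to provide, since there the estimates on $\lambda_{\min}^{-1}(M^\eps_{I,J})$ and $\lambda_{\min}^{-1}((\J_1^\eps)^*\J_1^\eps)$ were obtained uniformly in $\eps\in[0,1]$. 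Taking $\eps^2\log$, the polynomial-in-$\eps$ prefactors vanish in the limit, and the tail term $\mp(U_1^\eps\ge u)$ is controlled by the upper-bound half of the LDP, yielding $\limsup_{\eps\downarrow 0}\eps^2\log p_\eps(u)\le -\tfrac12 \inf\{\|h\|^2_{\bch}:\Psi(h)_1 = u\} = -\tfrac12 d^2(u)$.

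For the lower bound \eqref{varadhan lower bound}, I would use a localized version: fix $h\in\bch([0,1])$ with $\Psi(h)_1 = u$ and $\langle\D\Psi(h),\D\Psi(h)\rangle_{\cH}$ non-degenerate (a minimizing sequence for $d_R(u)$), and perform a Cameron--Martin shift $B\mapsto B+ h/\eps$ so that $U_1^\eps$ under the shifted measure is a perturbation of the deterministic point $\Psi(h)_1=u$. Because $\Psi$ is a submersion at $h$ — this is exactly the content of Theorem \ref{immersion Lie algebra}, which holds for all $u\in\mathbb{R}^n$ — the shifted random vector has a density near $u$ that is bounded below, uniformly for small $\eps$, after rescaling by the deterministic Malliavin matrix. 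Writing the density $p_\eps(u)$ via the Cameron--Martin/Girsanov formula picks up the factor $\exp(-\|h\|^2_{\bch}/(2\eps^2))$ times an expectation that is $\eps^{O(1)}$ and bounded below; taking $\eps^2\log$ and then the infimum over admissible $h$ gives $\liminf_{\eps\downarrow0}\eps^2\log p_\eps(u)\ge -\tfrac12 d^2_R(u)$. The non-degeneracy built into the definition \eqref{def: dr1} of $d_R$ is what guarantees the required density lower bound along the minimizing sequence.

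The main obstacle, as in the existence and upper-bound sections, is the unboundedness of the $V_i$: the standard references \cite{BOZ15, GOT18, BNOT16} assume $C^\infty$-bounded vector fields, and one must re-examine (i) the uniformity in $\eps$ of the negative moments of the Malliavin matrix of $U_1^\eps$ and of the Sobolev norms $\|\D U_1^\eps\|_{k,p}$, and (ii) the continuity/convergence estimates for $\Psi$ under Cameron--Martin shifts and under the finite-dimensional projections $\Pi^N$. Point (i) is handled exactly as in Theorem \ref{integrability small e-v} and Lemma \ref{lem: Malliavin derivative}, exploiting the nilpotency of the $V_i$ (the trick $\omega_I^J=\delta_I^J$ for $|I|\le N$) together with the fact that all components of the signature are iterated integrals with moments of all orders; point (ii) follows because, as observed in Remark \ref{regarding item b}, components of the log-signature are polynomials in the signature, and the signature is continuous in $p$-variation with all moments. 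Once these uniform-in-$\eps$ estimates are in place, the two bounds follow from the LDP and the submersion property exactly as above, and the final sentence of Theorem \ref{thm: varadhan estimate} — equivalence of $d$ and $d_R$ with $\|\cdot\|_{\textsc{CC}}$ — is then deduced separately in the subsection that follows, sandwiching $d$ and $d_R$ between the C-C distance using the density upper bound \eqref{upper bound 1} on one side and the strict positivity from Theorem \ref{th: positivity of density} together with Remark \ref{connecting by nondegenerate path} on the other.
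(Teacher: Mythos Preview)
Your lower bound argument is essentially the paper's: Cameron--Martin shift by $h/\eps$ with $\Psi(h)_1=u$ and $\gamma_{\Psi(h)_1}$ non-degenerate, followed by the observation that the rescaled shifted solution $(\Psi(\eps B+h)_1-\Psi(h)_1)/\eps$ converges (in $\mathbb{D}^\infty$, this is Lemma~\ref{lem: convergence in D infty}) to a non-degenerate Gaussian, so the localized expectation $\me[\chi(\eps B(h))\delta_u(\Psi(\eps B+h)_1)]$ behaves like $\eps^{-n}$ times a positive constant. This part is fine.

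Your upper bound, however, has a genuine gap. You propose to feed the LDP into the half-space bound \eqref{formula density}, i.e.\ to control $p_\eps(u)$ by $\mp(U_1^\eps\ge u)^{1/2}$ times Malliavin factors. This fails to give the sharp rate $-\tfrac12 d^2(u)$ for two reasons. First, the exponent $1/2$ in \eqref{formula density} halves whatever exponential rate you extract from the probability. Second, and more seriously, the LDP upper bound for the half-space event $\{U_1^\eps\ge u\}$ yields only $-\inf_{v\ge u}\tfrac12 d^2(v)$, and there is no reason this infimum over an unbounded closed set equals $\tfrac12 d^2(u)$; lower semicontinuity of $d$ gives no control here. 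The global Gaussian bound \eqref{upper bound 1} is even less useful: it delivers $-\|u\|_{\textsc{CC}}^2/C$ for some large $C$, not the precise Varadhan constant.

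The paper's fix is to localize: write $p_\eps(u)=\me[\chi(U_1^\eps)\delta_u(U_1^\eps)]$ for a cutoff $\chi$ supported in a small neighborhood of $u$, integrate by parts in the Malliavin sense, and apply H\"older with exponent $q>1$ to get a factor $\mp(U_1^\eps\in\mathrm{supp}\,\chi)^{1/q}$ times Malliavin terms that are polynomial in $\eps^{-1}$ (this is where Lemma~\ref{estimate of M derivative and matrix} enters). Letting $q\downarrow 1$ and shrinking $\mathrm{supp}\,\chi$ to $\{u\}$, the lower semicontinuity of $d$ then gives exactly $-\tfrac12 d^2(u)$. Both the replacement of the half-space by a shrinking ball and the freedom in the H\"older exponent are essential; \eqref{formula density} as stated provides neither.
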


We first lay down several lemmas that are crucial in the proof of Theorem \ref{thm: varadhan estimate}.

\begin{lemma}\label{lem: large deviation} $U^\eps_1$ satisfies a large deviation principle with rate function $d(\cdot)/2$, where $d(\cdot)$ is defined in \eqref{def: d1}.
\end{lemma}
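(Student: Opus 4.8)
The plan is to transfer a large deviation principle from the driving signal $\eps B$ to the solution $U_1^\eps = \Psi(\eps B)_1$ via the continuity of the It\^o--Lyons map and the contraction principle. First I would recall that $\eps B$ satisfies a Schilder-type large deviation principle on the rough path space: as $\eps \downarrow 0$, the rough path lift $S_N(\eps B)$ obeys an LDP with good rate function $I(\mathbf{h}) = \tfrac12\|h\|^2_{\bch([0,1])}$ if $\mathbf{h}$ is the signature of some $h \in \bch([0,1])$, and $+\infty$ otherwise; this is the rough-path Schilder theorem for fractional Brownian motion with $H>1/4$ (Friz--Victoir, and for the fBm case the results building on the Cameron--Martin embedding, e.g.\ as used in \cite{BOZ15}). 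The relevant topology here is the $p$-variation (or the inhomogeneous rough path) topology in which the It\^o--Lyons solution map $\Psi(\cdot)_1$ is continuous. Since the vector fields $V_i$ are polynomial but the group $\hG_N(\mr^d)$ is nilpotent, the solution map is in fact globally well-defined and continuous — this is the one point where the unboundedness of the $V_i$ must be addressed, but it is harmless because the log-signature is literally a fixed polynomial in the entries of the signature $S_N(\eps B)_1$ (see the Chen--Strichartz formula and the Remark after the log-signature definition), so continuity of $\Psi(\cdot)_1$ reduces to continuity of $\eps B \mapsto S_N(\eps B)_1$, which is standard.

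Next I would apply the contraction principle: since $U_1^\eps = \Psi(\eps B)_1$ with $\Psi(\cdot)_1$ continuous, $U_1^\eps$ satisfies an LDP on $\mr^n$ with good rate function
\[
\Lambda(u) = \inf\{\, I(\mathbf{h}) : \Psi(\cdot)_1 \text{ evaluated on } \mathbf{h} \text{ equals } u \,\} = \inf\Big\{ \tfrac12 \|h\|^2_{\bch([0,1])} : h \in \bch([0,1]),\ \Psi(h)_1 = u \Big\}.
\]
By the definition \eqref{def: d1}, this infimum is exactly $\tfrac12 d(u)^2$, so $\Lambda(u) = \tfrac12 d(u)^2$, which is the claimed rate function. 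I would also note that goodness (lower semicontinuity and compact sublevel sets) of $\Lambda$ is inherited automatically from the contraction principle applied to a good rate function and a continuous map; this matches the remark in the introduction that $d$ is a good rate function even though it may fail to be continuous or to satisfy a triangle inequality.

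The main obstacle, and the step deserving the most care, is justifying that the $\eps$-scaled fBm (equivalently its rough path lift) enjoys a large deviation principle in the right topology when $1/4 < H \le 1/2$, and that the solution map is continuous there despite the polynomial growth of the $V_i$. For $H > 1/2$ Young integration suffices; for $H \le 1/2$ one needs the genuine rough path framework and the enhanced Schilder theorem, together with the fact — already used elsewhere in the paper (cf.\ the proof of Theorem \ref{integrability small e-v} and Remark \ref{regarding item b}) — that nilpotency lets one bypass boundedness assumptions on the vector fields. I would isolate this as: (i) cite the enhanced LDP for $\eps B$ in $p$-variation rough path topology for $H > 1/4$; (ii) observe $S_N(\cdot)_1$ and hence $\Psi(\cdot)_1$ is continuous from this space to $\mr^n$; (iii) invoke the contraction principle; (iv) identify the resulting rate function with $\tfrac12 d(\cdot)^2$ by \eqref{def: d1}. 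Everything after step (i) is routine.
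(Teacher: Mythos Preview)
Your proposal is correct and follows essentially the same route as the paper: cite the rough-path Schilder theorem for $\eps B$ (the paper uses Millet--Sanz-Sol\'e \cite{MS}), note that $\Psi(\cdot)_1$ is continuous in the $p$-variation rough path topology (the paper makes this explicit by writing it as $\exp^{-1}\circ\pi^p_N$ evaluated at time $1$, which is exactly your observation that the log-signature is a fixed polynomial in the signature entries), and apply the contraction principle to identify the rate function as $\tfrac12 d(\cdot)^2$. Your treatment of the unboundedness of the $V_i$ via the polynomial/Chen--Strichartz structure is if anything more explicit than the paper's, but the argument is the same.
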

\begin{proof}
Fix any $p>\max\{1/H, N\}$. It is know (see, e.g., \cite{MS}) that $\triangle_\eps\mathbf{B}\triangleq\triangle_\eps S_N(B)$, as a $\hG_{\lfloor p\rfloor}(\mr^d)$-valued rough path satisfies a large deviation principle in $p$-variation topology with good rate function given by
\begin{align*}
J(h)=\left\{\begin{array}{ll}\frac{1}{2}\|h\|^{2}_{\bch([0,1])}\ \mathrm{if}\ h\in\bch([0,1])\\ +\infty\quad \mathrm{otherwise}.
\end{array}
\right.
\end{align*}
Note that $p>N$ and let  $\pi^p_N: \hG_{\lfloor p \rfloor}(\mr^d))\to \hG_{N}(\mr^d))$ be the canonical projection. It is then clear that $\Psi(\cdot)_1=\exp^{-1}\circ \pi^p_N ((\cdot)_1): C^{p-{\rm{var}}}([0,1],\hG_{\lfloor p \rfloor}(\mr^d)) \to \frak{g}_N(\mr^d)$ is continuous.  Here we have identified $\frak{g}_N(\mr^d)$ with $\mr^n$ through the basis $\mathcal{B}$. Now note that $U_1^\eps=\Psi(\triangle_\eps{\bf{B}})_1$. The large deviation principle of $U^\eps_1$ follows from the contraction principle. 
\end{proof}

\begin{lemma}\label{lem: convergence in D infty}For each $h\in\bch([0,1])$, we have
\begin{align}\label{converge of derivative in D infty}
\lim_{\eps\downarrow 0}\frac{1}{\eps}\left(\Psi(\eps B+h)_t-\Psi(h)_t\right)=Z(h)_t,
\end{align}
in the topology of $\mathbb{D}^\infty$. Moreover, $Z(h)_t$ is a centered Gaussian random variable in $\mathbb{R}^n$ with variance $\gamma_{\Psi(h)_t}=\langle \mathbf{D}\Psi(h)_t, \mathbf{D}\Psi(h)_t\rangle_{\bch{([0,1])}}$, the deterministic Malliavin matrix of $\Psi(\cdot)_t$ at $h$.
\end{lemma}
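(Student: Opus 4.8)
The plan is to treat this as a standard stochastic Taylor expansion / differentiability statement for the It\^o--Lyons map along a fixed Cameron--Martin direction, adapted to the rough path / Malliavin setting, and to use the nilpotency of the $V_i$'s to handle the unboundedness of the vector fields. First I would fix $h\in\bch([0,1])$ and observe that, since the solution $\Psi(\eps B+h)_t$ is (componentwise) a polynomial in the signature of $\eps B+h$, whose level-$k$ terms are iterated integrals, the whole analysis reduces to expanding iterated integrals of $\eps B+h$ in powers of $\eps$; this is where the polynomial (hence nilpotent, truncated at level $N$) structure is essential, because it makes every expansion a \emph{finite} sum and automatically places each term in a fixed Wiener chaos of $B$. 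Writing $\Phi^\eps_t=\tfrac1\eps(\Psi(\eps B+h)_t-\Psi(h)_t)$, I would derive the linear rough differential equation that $\Phi^\eps$ solves (obtained by subtracting the equations for $\Psi(\eps B+h)$ and $\Psi(h)$ and dividing by $\eps$), and similarly the equation solved by the candidate limit $Z(h)_t$, which is the formal derivative of $\Psi$ at $h$ in the direction $B$, i.e. $Z(h)_t=\int_0^t DV(\Psi(h)_s)Z(h)_s\,d h_s+\sum_i V_i(\Psi(h)_s)\,dB^i_s$ — a linear equation with deterministic (polynomial) coefficients driven linearly by $B$, whose solution is therefore a first-chaos (Gaussian) $\mathbb R^n$-valued random variable.

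Next I would prove the convergence $\Phi^\eps_t\to Z(h)_t$ in every $\mathbb D^{k,p}$. Because all objects live in a fixed finite sum of Wiener chaoses of $B$ (again by nilpotency), equivalence of $L^p$ norms on a fixed chaos reduces $\mathbb D^{k,p}$ convergence to $L^2$ convergence, so it suffices to show $\me|\Phi^\eps_t-Z(h)_t|^2\to0$ together with the analogous statements for a fixed finite number of Malliavin derivatives. For the $L^2$ (and pathwise) control I would invoke the continuity/local-Lipschitz estimates for the It\^o--Lyons map in $p$-variation (Friz--Victoir), applied to the rough path $\eps\mathbf B+\mathbf h$, together with the elementary fact that $\eps\mathbf B\to\mathbf 0$ and the signature of $\eps B+h$ converges to that of $h$ in $p$-variation in $L^q$ for all $q$; the polynomial coefficients are harmless on the relevant bounded sets because the driving signatures have moments of all orders (as noted in the Remark after the log-signature definition). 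Estimating $\Phi^\eps_t-Z(h)_t$ then amounts to a Gr\"onwall-type argument on the linear RDE for the difference, whose ``source'' term is $O(\eps)$ in the appropriate norm. Finally, the identification of the law of $Z(h)_t$ as centered Gaussian with covariance $\gamma_{\Psi(h)_t}=\langle \mathbf D\Psi(h)_t,\mathbf D\Psi(h)_t\rangle_{\bch([0,1])}$ follows by differentiating \eqref{converge of derivative in D infty} in $h$: $Z(h)_t=\mathbf D_B\Psi(h)_t$ in the sense that $Z(h)_t=\int_0^1\langle \mathbf D_s\Psi(h)_t,dB_s\rangle$ once one checks that the directional derivative of $\Psi$ at the deterministic point $h$ agrees with its Malliavin derivative there, so $Z(h)_t=\mathbf D\Psi(h)_t\cdot B$ is the Wiener integral of the deterministic kernel $\mathbf D\Psi(h)_t$, hence Gaussian with the stated covariance.

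The main obstacle I anticipate is making the convergence genuinely uniform in $\eps$ in the $\mathbb D^{k,p}$ topology while the vector fields are only of polynomial growth: one must ensure that the higher Malliavin derivatives of $\Phi^\eps_t$ stay bounded in $L^p$ as $\eps\downarrow0$, which in a general (bounded-vector-field) setting is routine but here requires exploiting that, after truncation at level $N$, everything is a polynomial functional of the signature of $\eps B+h$, so that the ``bad'' polynomial nonlinearity never gets iterated infinitely and all moments are finite and uniformly controlled for $\eps\in(0,1]$. Once that uniform integrability on a fixed finite chaos is in hand, the upgrade from $L^2$ convergence to $\mathbb D^\infty$ convergence is automatic, and the Gaussian identification is a short computation.
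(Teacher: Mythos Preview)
Your proposal is correct and follows essentially the same route as the paper: both derive the linear RDE for $Z(h)_t$, identify it as a first-chaos (hence centered Gaussian) random variable whose directional derivatives coincide with $\mathbf{D}\Psi(h)_t$, and then upgrade the convergence to $\mathbb{D}^\infty$ by exploiting that, thanks to the truncated-signature structure, every object is a polynomial in iterated integrals of $\eps B+h$ and hence sits in a fixed finite sum of Wiener chaoses. The only cosmetic difference is that the paper phrases the chaos-based $\mathbb{D}^{k,p}$ control via Inahama's auxiliary-fBm trick introduced earlier, whereas you invoke the equivalent norm-equivalence on a fixed chaos directly; both yield the same conclusion.
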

\begin{proof}
It is clear that  $\Psi(\eps{B}+h)_t$ satisfies the following rough SDE
\begin{align}\label{equation for psi ep}
{\Psi(\eps{B}+h)_t=\sum_{i=1}^d\int_0^tV_i(\Psi(\eps{B}+h)_s)d(\eps{B^i}+h_s^i).}\end{align}
By standard path-wise estimates, $\Phi(\eps{\bf{X}}+h)_t$ is smooth in $\eps$ and its derivatives satisfy a rough SDE obtain by formally differentiating \eqref{equation for psi ep} on both sides (see, e.g., \cite[Proposition 11.4]{FV2010}). In particular, at $\eps=0$, we have
\begin{align*}
\lim_{\eps\downarrow0}\frac{1}{\eps}\left(\Psi(\eps {B}+h)_t-\Psi(h)_t\right)=Z(h)_t,
\end{align*}
where $Z(h)_t$ satisfies the rough differential equation 
\begin{align}\label{equation for Zh}
Z(h)_t=\sum_{i=1}^d\int_0^tDV_i(\Psi(h)_s)Z(h)_sdh^i_s 
+\sum_{i=1}^d\int_0^tV_i(\Psi(h)_s)dB^i_s.
\end{align}
Thus, for any $k\in\bch$ the Malliavin derivative of $Z(h)_t$ along the direction $k$ satisfies the rough equation
\begin{align}\label{equation for DZ}
D_kZ(h)_t=\sum_{i=1}^d\int_0^tDV_i(\Psi(h)_s)D_kZ(h)_sdh^i_s 
+\sum_{i=1}^d\int_0^tV_i(\Psi(h)_s)dk^i_s.
\end{align}
Note that equation \eqref{equation for DZ} is deterministic, which implies $Z(h)_t$ is in the first Chaos. The fact that $Z(h)_t$ is also centered can be seen by taking expectation on both sides of equation \eqref{equation for Zh}.   On the other hand, it is easy to check that the deterministic Malliavin derivation of $\Phi(h)_t$ along the direction of $k$ satisfies exactly the same equation \eqref{equation for DZ}. Hence, $Z(h)_t$ is a centered Gaussian random variable with covariance matrix given by $$\gamma_{\Psi(h)_1}=\langle \mathbf{D}\Psi(h)_1, \mathbf{D}\Psi(h)_1\rangle_{\bch{([0,1])}},$$ the deterministic Malliavin matrix of $\Psi(\cdot)_1$ at $h$.

The fact that the convergence in \eqref{converge of derivative in D infty} takes place in $\mathbb{D}^\infty$ can be seen from the following observations.
\begin{itemize}
\item [(a)] Components of the signature $S_N(\eps{B}+h)_t$ are simply iterated integrals of $\eps{B}+h$, and it is easy to see that
\begin{align*}
\frac{1}{\eps}\left(S_N(\eps(B)+h)_t-S_N(h)_t\right)
\end{align*}
converges in $\mathbb{D}^\infty$, using the idea of Inahama explained before to control the $\cH^{\otimes k}$-norm.
\item [(b)] We have $\Psi(\eps{B}+h)=\log (S_N(\eps{B}+h))$, and $\exp^{-1}$ is a smooth map with polynomial growth.
\end{itemize}
The proof is thus completed.
\end{proof}

\begin{lemma}\label{estimate of M derivative and matrix} 
Let $U^\eps_t$ be defined in \eqref{u eps}. We have for $\eps\in(0,1]$,
\begin{align}\label{Malliavin derivative of u}\|\D U_1^\eps\|_{k,p}\leq C_{p} \eps^{H},\end{align}
and
\begin{align}\label{Malliavin matrix of u}
\|\det\gamma_{U_1^\eps}^{-1}\|_p\leq C_p \eps^{-2HnN}.
\end{align}
where $n$ is the dimension of $\frak{g}_N(\mr^d)$ and $C_p$ is a positive constant depending on $p$.
\begin{proof}
This is an easy consequence of \eqref{M matrix of u}, \eqref{M derivative of u} and the self-similarity of $B$.
\end{proof}

\end{lemma}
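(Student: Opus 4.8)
The plan is to reduce both bounds to the estimates for $U_1$ already obtained, using the scaling structure of the log-signature. The key observation is that $U^\eps$ is nothing but the log-signature of the rescaled path $\eps B$: since the $k$-th tensor level of $S_N(\eps B)_t$ equals $\eps^k$ times that of $S_N(B)_t$, we have $S_N(\eps B)_t=\Delta_\eps X_t$, and because $\exp^{-1}$ conjugates $\Delta_\eps$ on $\hG_N(\mr^d)$ to $\delta_\eps$ on $\mathfrak g_N(\mr^d)$, this gives the pathwise identity $U^\eps_t=\Delta_\eps U_t$. (Combined with \eqref{global scaling 2} this also reads $U^\eps_1\stackrel{law}{=}U_{\eps^{1/H}}$, with $\eps^{1/H}\le 1$ since $\eps\le1$ and $H\le1$; either formulation suffices.)

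First I would establish \eqref{Malliavin derivative of u}. Since $\Delta_\eps$ is linear and deterministic it commutes with the Malliavin derivative, so $\D^{j}U^\eps_1=\Delta_\eps\,\D^{j}U_1$ for every $j\ge1$. In coordinates adapted to the stratification $\mr^n=\mathcal U_1\oplus\cdots\oplus\mathcal U_N$, the operator $\Delta_\eps$ multiplies the $i$-th component by $\eps^{w_i}$ with $w_i\in\{1,\dots,N\}$, hence $\|\Delta_\eps v\|\le\eps\|v\|$ for $\eps\le1$ in each of the relevant Euclidean/$\ch^{\otimes j}$ norms. It follows that $\|\D U^\eps_1\|_{k,p}\le\eps\,\|\D U_1\|_{k,p}$, and Lemma \ref{lem: Malliavin derivative} (i.e.\ \eqref{M derivative of u} at $t=1$) gives $\|\D U_1\|_{k,p}\le C_{k,p}$; since $\eps\le\eps^{H}$ for $\eps\le1$ and $H\le1$, this yields $\|\D U^\eps_1\|_{k,p}\le C_{k,p}\,\eps^{H}$.

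Next I would establish \eqref{Malliavin matrix of u}. From $\D U^\eps_1=\Delta_\eps\D U_1$ one gets $\gamma_{U^\eps_1}=D_\eps\,\gamma_{U_1}\,D_\eps$ with $D_\eps=\mathrm{diag}(\eps^{w_1},\dots,\eps^{w_n})$, so $\det\gamma_{U^\eps_1}=\eps^{2\nu}\det\gamma_{U_1}$ where $\nu=\sum_i w_i=\sum_{k=1}^N k\dim\mathcal V_k$. Therefore $\|\det\gamma_{U^\eps_1}^{-1}\|_p=\eps^{-2\nu}\|\det\gamma_{U_1}^{-1}\|_p$, and the last factor is finite because $\det\gamma_{U_1}^{-1}\le\lambda_1^{-n}$, where $\lambda_1$ denotes the smallest eigenvalue of $\gamma_{U_1}$, and Theorem \ref{integrability small e-v} at $t=1$ (i.e.\ \eqref{M matrix of u}) gives $\lambda_1^{-1}\in\bigcap_{q\ge1}L^q(\mp)$. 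Using $\nu\le nN$ and $\eps\le1$ then gives a bound of the form $\|\det\gamma_{U^\eps_1}^{-1}\|_p\le C_p\,\eps^{-2nN}$; the exact power of $\eps$ here is immaterial for the Varadhan estimate, being eventually killed by the factor $\eps^2\log(\cdot)$.

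I expect no genuine obstacle: the content of the lemma is entirely a rescaling corollary of Theorem \ref{integrability small e-v} and Lemma \ref{lem: Malliavin derivative}, and the only care required is the bookkeeping of the dilation weights $w_i$ and checking that the pathwise (equivalently, in-law) identity $U^\eps_t=\Delta_\eps U_t$ transfers cleanly to the Malliavin--Sobolev norms and to the Malliavin matrix.
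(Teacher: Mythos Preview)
Your proposal is correct and follows essentially the same scaling argument the paper indicates: the pathwise identity $U^\eps_1=\Delta_\eps U_1$ is exactly how one implements the reference to \eqref{M matrix of u}, \eqref{M derivative of u} and self-similarity, and it cleanly transfers both the Sobolev norms and the Malliavin matrix via the diagonal dilation $D_\eps$. Your observation that the determinant bound comes out as $\eps^{-2\nu}$ (hence $\eps^{-2nN}$) rather than the stated $\eps^{-2HnN}$ is also correct --- the extra factor of $H$ in the lemma appears to be a slip, and as you note the precise power of $\eps$ is immaterial for the Varadhan estimate.
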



\bigskip
\begin{proof}[Proof of Theorem \ref{thm: varadhan estimate}] With Lemma \ref{lem: large deviation}, Lemma \ref{lem: convergence in D infty} and Lemma \ref{estimate of M derivative and matrix} in hand,  , the proof of Theorem \ref{thm: varadhan estimate} is standard.  For the sake of completeness, we online the proof below. More details can be found in \cite{BOZ15}.  

\medskip
\noindent\emph{Lower bound:}\quad We first prove \eqref{varadhan lower bound}. To this aim, for any $u\in\mathbb{R}^n$, fix an arbitrary $\eta>0$ and let $h\in\bch$ be such that $\Psi(h)_1=u$, $\gamma_{\Psi(h)_1}=\langle \D\Psi(h),\D\Psi(h)\rangle_{\cH}$ is non-degenerate,  and $\|h\|^2_{\bch}\leq d^2_R(u)+\eta$. Let $f\in C_0^\infty(\mathbb R^n).$ By Cameron-Martin's theorem for $B$, it is readily checked that
$$\me\lc f(U^\eps_1)\rc=e^{-\frac{\|h\|_{\bch}^2}{2\eps^2}} \, 
\me \lc f(\Psi_1(\eps B+h))e^{-\frac{B(h)}{\eps}}\rc,$$ 
where  $B(h)$ denotes the Wiener integral of $h$ with respect to $B$. Now consider a function $\chi\in C^\infty(\mr)$, satisfying  $0\leq \chi\leq 1$, such that $\chi(t)=0$ if $t\not\in[-2\eta, 2\eta]$, and $\chi(t)=1$ if $t\in[-\eta,\eta]$. Then, if $f\geq 0$, we have
$$\me\lc f(U^\eps_1)\rc
\geq e^{-\frac{\|h\|^2_{\ch}+4\eta}{2\eps^2}}\, 
\me\lc \chi(\eps B(h))f(\Psi(\eps B+h)_1)\rc.
$$
Hence, by means of an approximation argument applying the above estimate to $f=\delta_{u}$, we obtain
\begin{align}\label{lower bound claim1}
\eps^2\log p_\eps(u)\geq 
-\left(\frac{1}{2}\|h\|_{\bch}^2+2\eta\right)
+\eps^2\log\me\big[\chi(\eps B(h))\delta_u(\Psi(\eps B+h)_1)\big].
\end{align}

We now bound the right hand side of equation \eqref{lower bound claim1}. Owing to the fact that $\Psi(h)_1=u$ and thanks to the scaling properties of the Dirac distribution, it is easily seen that
\begin{align}\label{varadhan estimate extra term}
\me\big(\chi(\eps B(h))\delta_u(\Psi(\eps B+h)_1)\big)=\eps^{-n}\me\left(\chi(\eps B(h))\delta_0\left(\frac{\Psi(\eps B+h)_1-\Psi(h)_1}{\eps}\right)\right).
\end{align}
Thanks to Lemma \ref{lem: convergence in D infty}, when we send $\eps$ to 0 the expectation on the right hand-side of   \eqref{varadhan estimate extra term} tends to $\me\delta_0(Z(h)_1)$. 
In particular, we get
$$\lim_{\eps\downarrow0}\eps^2\log\me\big(\chi(\eps B(h))\delta_u(\Psi(\eps B+h)_1)\big)=0.$$
Plugging this information in (\ref{lower bound claim1})  and letting $\eps\downarrow0$ we end up with
$$
\liminf_{\eps\downarrow0}\eps^2\log p_{\eps}(u)\geq
-\lp \frac{1}{2}\|h\|^2_{\bch}+2\eta\rp
\geq -\lp d^2_R(u)+3\eta\rp.
$$
Since $\eta>0$ is arbitrary this yields (\ref{varadhan lower bound}).

\bigskip
\noindent\emph{Upper bound:}
Now we show the upper bound \eqref{varadhan upper bound}.
Fix a point $u\in\mathbb{R}^n$ and consider a function $\chi\in C_0^\infty(\mathbb{R}^n), 0\leq\chi\leq1$ such that $\chi$ is equal to one in a neighborhood of $u$. The density of $U_1^\eps$ at point $u$ is given by
$$
p_\eps(u)=\me\lc\chi(U_1^\eps)\delta_u(U_1^\eps)\rc.
$$
Integrate the above expression by parts in the sense of Malliavin calculus (see, e.g., \cite{Nualart06}) and apply H\"{o}lder's inequality (see, e.g., \cite[Proposition 1.5.6]{Nualart06}), we have
\begin{align*}
\me[\chi(U_1^\eps)\delta_u(U_1^\eps)]
\leq&\mp(U_1^\eps\in\mathrm{supp}\chi)^\frac{1}{q}\cdot c_{q}\|\gamma_{U_1^\eps}^{-1}\|_\beta^m\|{\bf{D}} U_1^\eps\|_{n,\gamma}^r\|\chi(U_1^\eps)\|^n_{n,q},
\end{align*}
for some constants $q>1$, $\beta, \gamma>0$ and integers $m, r$. Thus, invoking the estimates  (\ref{Malliavin derivative of u}) and (\ref{Malliavin matrix of u}), we obtain
$$\lim_{\eps\downarrow0}\eps^2\log \me[\chi(U_1^\eps)\delta_u(U_1^\eps)]\leq\lim_{\eps\downarrow0}\eps^2\log \mp(U_1^\eps\in\mathrm{supp}\chi)^\frac{1}{q}.$$
Finally the large deviation principle for $U_1^\eps$  in Lemma \ref{lem: large deviation} ensures that for small $\eps$ we have
$$\mp(U_1^\eps\in\mathrm{supp}\chi)^\frac{1}{q}\leq e^{-\frac{1}{q\eps^2}(\inf_{z\in\mathrm{supp}\chi}d^2(z)+o(1))}.$$
Since $q$ can be chosen arbitrarily close to 1 and $\mathrm{supp}(\chi)$ can be taken arbitrarily close to $y$, the proof of \eqref{varadhan upper bound} is now easily concluded {thanks to the lower semi-continuity of $d$.}

\end{proof}

\subsection{Controlling distance} Comparing the  Varadhan estimate to the results in the upper and lower bound, it is natural to ask whether the controlling distance $d$ and $d_R$ are comparable to the C-C distance. Our answer is affirmative. And this section is devoted to the proof of the equivalence of these three quantities.  Although our argument can be carried out on the Lie algebra $\frak{g}_N(\mr^d)$, we will instead work on the Lie group $\hG_N(\mr^d)$. Because the argument is more direct on $\hG_N(\mr^d)$, and obtained results can be easily translated to corresponding results on $\frak{g}_N(\mr^d)$. 

To this aim, denote by $\Phi(\cdot)_t=S_N(\cdot)_t: \bar{\cH}\to \hG_N(\mr^d)$, the It\^{o} map associated to equation \eqref{SDE: signature}.
 For any $g\in \hG_N(\mr^d)$, define
\begin{align}\label{def: d}\tilde d(g)&=\inf\{ \|h\|_{\bch([0,1])}: h\in\bch([0,1]), \text{and}\ \Phi(h)_1=g\}.
\end{align}
This is the controlling `distance'  (of the system \eqref{SDE: signature}) between $g$ and $\mathbf{1}$, the group identity of $\hG_N(\mr^d)$.  Similarly, set
\begin{align}\label{def: dr}\tilde{d}_R(g)=\inf\{ \|h\|_{\bar\cH}: h\in\bar{\cH},  \Phi(h)_1=g, \text{and}\  \langle \D\Phi(h),\D\Phi(h)\rangle_{\cH} \text{is\ non-degenerate}.\}.\end{align}

\begin{remark}\label{rem: identify distances g and G}
Recall that, with previous notations, for any $h\in\bch([0,1])$, $[ \exp^{-1} (\Phi(h)_t)]_\mathcal{B}=\Psi(h)_t$ and that $\exp: \frak{g}_N(\mr^d)\to \hG_N(\mr^d)$ is a global diffeomorphism. It is clear that for $[ \exp^{-1}(g)]_\mathcal{B}=u$ we have $d(u)=\tilde{d}(g)$ and $d_R(u)=\tilde{d}_R(g)$.
\end{remark}
In the rest of this section, we will show that $\tilde{d}$, $\tilde{d_R}$ and the C-C distance on $\hG_N(\mr^d)$ are all equivalent.
We first state a well-known embedding theorem for the Cameron-Martin space $\bch$ of the fractional Brownian motion.
\begin{lemma}\label{lemma: variational embedding}If $H>\frac{1}{2}$, then
$\bar{\mathcal{H}}\subseteq C_{0}^{H}([0,1];\mathbb{R}^{d}).$ If $H\leqslant\frac{1}{2}$,
then for any $q>\left(H+1/2\right)^{-1}$, we have 
$\bar{\mathcal{H}}\subseteq C_{0}^{q-\mathrm{var}}([0,1];\mathbb{R}^{d}).$ The above inclusions are continuous embeddings.
\end{lemma}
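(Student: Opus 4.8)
The plan is to reduce both inclusions to classical mapping properties of the operator $K_H$ and of Riemann--Liouville fractional integrals. Recall from Section~\ref{intro Malliavin calculus} that $K_H:L^2([0,1])\to\bch$ is an onto isometry and that $(K_H\phi)(0)=0$; hence it suffices, in each regime of $H$, to bound the relevant path norm of $h=K_H\phi$ by a constant times $\|\phi\|_{L^2([0,1])}$, and the paths then automatically start at $0$.

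For $H>1/2$ I would work from the fractional-calculus representation of $K_H$ already used in the proof of Proposition~\ref{Prop: concatenation},
\[
K_H\phi=c_H\, I_{0+}^{1}\!\left(t^{H-\frac12}\, I_{0+}^{H-\frac12}\big(s^{\frac12-H}\phi(s)\big)(t)\right).
\]
The inner operator $\psi\mapsto I_{0+}^{H-\frac12}(s^{\frac12-H}\psi)(t)$ maps $L^2([0,1])$ continuously into $L^{p}([0,1])$ for $p$ slightly larger than $2$: the weight $s^{\frac12-H}$ (with negative exponent) is absorbed by Hardy's inequality, and the gain of integrability comes from the Hardy--Littlewood--Sobolev inequality, since $H-\tfrac12>0$. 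Writing $g:=I_{0+}^{H-\frac12}(s^{\frac12-H}\phi)$, so that $\|g\|_{L^p}\lesssim\|\phi\|_{L^2}$ and $(K_H\phi)'(t)=c_H\,t^{H-\frac12}g(t)$, for $s<t$ one gets
\[
|K_H\phi(t)-K_H\phi(s)|=\Big|\int_s^t c_H\,u^{H-\frac12}g(u)\,du\Big|\le c_H\,\|u^{H-\frac12}\|_{L^{p'}([s,t])}\,\|g\|_{L^p}\lesssim|t-s|^{H}\|\phi\|_{L^2}
\]
upon balancing the exponents, which is the desired continuous embedding into $C_0^{H}([0,1];\mr^d)$. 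Equivalently, one may invoke $\bch\subseteq W^{H+1/2,2}([0,1];\mr^d)$ together with the Sobolev--Morrey embedding $W^{H+1/2,2}\hookrightarrow C^{H}$.

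For $H\le 1/2$ the computation degenerates --- $I_{0+}^{H-1/2}$ is now a fractional derivative of order $\tfrac12-H$ and the weights become singular at $0$ --- so global Hölder regularity cannot be expected, and I would instead pass through a variation embedding. The key step is to place $\bch$ inside a fractional Sobolev/Besov space of order exactly $H+\tfrac12$ over $[0,1]$: modulo harmless singular weights, $h=K_H\phi$ is of the form $I_{0+}^{H+1/2}\psi$ with $\psi$ controlled by $\phi$ in $L^1([0,1])$ (since $L^2\subseteq L^1$ here), which, by the lifting property of Riemann--Liouville integrals on the Besov scale, endows $h$ with $H+\tfrac12$ degrees of Besov regularity and norm $\lesssim\|\phi\|_{L^2}$. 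One then applies the (sharp) Besov--variation embedding, in the form that smoothness index $H+\tfrac12$ forces finite $q$-variation for every $q>(H+1/2)^{-1}$, to conclude. All of this is classical and contained in \cite[Chapter~15 and Appendix]{FV2010} (see also \cite{OTG19}), so in the final write-up the lemma can simply be cited.

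The routine ingredients are the Hardy, Hardy--Littlewood--Sobolev and Sobolev/Besov estimates. The one genuinely delicate point is the $H\le 1/2$ case: one must keep the singular weights $s^{\pm(H-1/2)}$ under control while extracting the \emph{exact} smoothness $H+\tfrac12$ from $K_H$, since it is precisely the resulting variation exponent $(H+1/2)^{-1}$ --- rather than the cruder $1/H$ one would get by first passing through $C^H$ --- that is needed for the complementary-Young-regularity condition to be compatible with the standing assumption $H>1/4$ used elsewhere in the paper.
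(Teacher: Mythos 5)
The paper itself states this lemma without proof, as a well-known Cameron--Martin variation embedding (see \cite[Chapter~15 and Appendix]{FV2010}); your write-up correctly ends by citing the same sources, so at the level of rigour the paper adopts there is no disagreement. Still, two remarks.

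The $H>1/2$ H\"older part admits a one-line proof that sidesteps fractional calculus entirely, and in fact holds for every $H\in(0,1)$: since $\bch$ is the reproducing-kernel Hilbert space of the covariance $R$, for $h\in\bch$ one has $h(t)=\lla R(t,\cdot),h\rra_{\bch}$, hence
\[
|h(t)-h(s)|\le\|R(t,\cdot)-R(s,\cdot)\|_{\bch}\,\|h\|_{\bch}=\lp\me\lc(B_t-B_s)^2\rc\rp^{1/2}\|h\|_{\bch}=|t-s|^H\|h\|_{\bch},
\]
which is already the continuous embedding into $C_0^H$. The lemma records the H\"older statement only for $H>1/2$ because for $H\le1/2$ the $q$-variation index $q>(H+1/2)^{-1}<1/H$ is strictly sharper than the $1/H$-variation that $C^H$ regularity yields; as you observe, it is exactly this refinement that makes the complementary Young condition $1/q+H>1$ available down to $H>1/4$.

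By contrast, your fractional-calculus route for $H>1/2$ has a real gap as written. Balancing exponents in $\|u^{H-1/2}\|_{L^{p'}([s,t])}\,\|g\|_{L^p}\lesssim|t-s|^H$ forces $p=1/(1-H)>2$, but an unweighted Hardy--Littlewood--Sobolev estimate combined with H\"older only places $g=I_{0+}^{H-1/2}\big(s^{1/2-H}\phi\big)$ in $L^q$ for $q<2$: the singular input weight $s^{1/2-H}$ costs more integrability than the fractional integral of order $H-\tfrac12$ restores. Reaching $L^{1/(1-H)}$ requires a genuinely weighted Stein--Weiss-type inequality that also exploits the compensating output weight $t^{H-1/2}$; this is true, but it is not the ``routine'' Hardy/HLS step your sketch suggests, and the RKHS argument above is both shorter and sharper. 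The $H\le1/2$ variation embedding via Besov regularity is indeed the content of the Friz--Victoir reference you cite and is safe to quote as such.
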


With the help of Lemma \ref{lemma: variational embedding}, we are able to show the following propositions.
\begin{proposition}\label{Prop: norm upper bound}
Fix any $H>1/4$. There exists a positive constant $C$ such that
$$\tilde{d}(g)\ \text{and} \ \tilde{d}_R(g)\leq C,$$
for all $g\in \hG_N(\mr^d)$ with $\|g\|_{\textsc{CC}}=1$.
\end{proposition}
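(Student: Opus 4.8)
The plan is to reduce the whole statement to the single path already constructed in the proof of Theorem~\ref{immersion}. First I would observe that $\tilde{d}(g)\le\tilde{d}_R(g)$ for every $g\in\hG_N(\mr^d)$: the infimum defining $\tilde{d}$ ranges over $\{h\in\bch([0,1]):\Phi(h)_1=g\}$, which contains the set $\{h\in\bch([0,1]):\Phi(h)_1=g,\ \langle\D\Phi(h),\D\Phi(h)\rangle_{\cH}\ \text{non-degenerate}\}$ over which $\tilde{d}_R$ is computed, so the infimum over the former is no larger. Hence it suffices to bound $\tilde{d}_R(g)$ uniformly over $\{g:\|g\|_{\textsc{CC}}=1\}$, and for that I only need, for each such $g$, one admissible competitor path whose Cameron--Martin norm is bounded independently of $g$.

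Such a competitor is handed to us by Remark~\ref{connecting by nondegenerate path}: applying it with $M=1$, for every $g$ with $\|g\|_{\textsc{CC}}\le 1$ the path $h$ produced in the proof of Theorem~\ref{immersion} satisfies $\Phi(h)_1=S_N(h)_1=g$, makes $S_N(\cdot)_1$ non-degenerate at $h$ (so that $\langle\D\Phi(h),\D\Phi(h)\rangle_{\cH}$ is non-degenerate), and obeys $\|h\|_{\bch([0,1])}\le C'$ with $C'$ depending only on $H$, $N$ and $d$. Therefore $\tilde{d}_R(g)\le\|h\|_{\bch([0,1])}\le C'$, and then $\tilde{d}(g)\le\tilde{d}_R(g)\le C'=:C$. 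That closes the argument.

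For completeness I would also indicate the self-contained route, which makes clear where the real work sits. For $\tilde{d}$ alone one applies Lemma~\ref{lem: quasi-inverse of signature map} with $M=1$ to get a smooth $\gamma$ with $S_N(\gamma)_1=g$ and $C^2(\gamma)$ bounded by a constant depending only on $N$ and $d$ --- normalising $\gamma(0)=0$ and using that $\dot\gamma$ is supported in $[1/3,2/3]$ to convert the bound on $\|\dot\gamma\|_\infty$ into one on $\|\gamma\|_\infty$ --- and then invokes Proposition~\ref{Prop: concatenation} (treating $\gamma$ as a one-piece concatenation, or padding it by a constant piece to meet the $m\ge 2$ hypothesis) to obtain $\|\gamma\|_{\bch([0,1])}\le C$. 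For $\tilde{d}_R$ one additionally concatenates $\gamma$ with the fixed finite piecewise-linear family from Step~1 of the proof of Theorem~\ref{immersion} that makes the It\^o--Lyons map a submersion, and applies Proposition~\ref{Prop: concatenation} to this $(n+1)$-piece concatenation. I do not expect a serious obstacle in any of this: the one genuinely technical ingredient is internal to Proposition~\ref{Prop: concatenation}, namely the fractional-calculus estimate controlling the Cameron--Martin norm of a concatenation of smooth paths when $H>1/2$, and that is already in hand.
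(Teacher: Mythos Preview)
Your proposal is correct and follows essentially the same route as the paper: reduce to $\tilde d_R$ via $\tilde d\le\tilde d_R$, then invoke Remark~\ref{connecting by nondegenerate path} with $M=1$ to produce, for each $g$ on the unit CC-sphere, a path $h\in\bch([0,1])$ with $\Phi(h)_1=g$, non-degenerate deterministic Malliavin matrix, and $\|h\|_{\bch([0,1])}\le C$. The only cosmetic difference is that the paper first disposes of the case $H\le 1/2$ by the cheap embedding $W_0^{1,2}\subseteq\bch$ from Lemma~\ref{lem: continuous embedding when H<1/2}, and only runs the Remark~\ref{connecting by nondegenerate path} argument for $H>1/2$; since Proposition~\ref{Prop: concatenation} and Remark~\ref{connecting by nondegenerate path} are valid for all $H\in(0,1)$, your unified treatment is perfectly fine.
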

\begin{proof}
When $H\leq 1/2$, the claimed result follows from Lemma \ref{lem: continuous embedding when H<1/2} and the definition of $\|\cdot\|_{\textsc{CC}}$. In what follows, we prove our result for $H>1/2$.

Observe that, by the definition of $\tilde{d}$ and $\tilde{d}_R$,
\begin{align}\label{d<dR}\tilde{d}(g)\leq \tilde{d}_R(g),\quad \text{for\ all}\ g\in \hG_N(\mr^d).\end{align}
We thus only need to prove the claimed upper bound for $d_R$. 

Pick any $g\in \hG_N(\mr^d)$ with $\|g\|_{\textsc{CC}}=~1$. 
Thanks to Remark \ref{connecting by nondegenerate path}, we can find an $h\in\bch([0,1])$ such that 
\begin{itemize}
\item[(a)] $\Phi({h})_1=g$;
\item[(b)] $\Phi_1:{\bch}([0,1])\to \hG_N(\mr^d)$ is non-degenerate at ${h}$, that is, $\langle \D\Phi(h),\D\Phi(h)\rangle_{\cH}$ is non-degenerate; and
\item[(c)] $\|{h}\|_{{\bch}([0,1])}\leq C,$ for some constant $C$ not depending on $u$. 
\end{itemize}
The existence of such an $h$ with the above three properties implies immediately that 
$$\tilde{d}_R(g)\leq C,\quad\text{for\ all}\ g\in \hG_N(\mr^d)\ \text{with}\ \|g\|_{\textsc{CC}}=1.$$
The proof is thus completed.
\end{proof}

\begin{proposition}\label{Prop: norm lower bound}
Fix any $H>1/4$. There exists a positive constant $c$ such that
$$\tilde{d}(g)\ \text{and}\ \tilde{d}_R(g) \geq c,$$
for all $g\in \hG_N(\mr^d)$ with $\|g\|_{\textsc{CC}}=1$.
\end{proposition}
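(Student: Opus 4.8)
The plan is to deduce the lower bound from a \emph{linear} estimate controlling the signature by its driving path. Precisely, I will show there is a constant $C>0$, independent of the path, with
\[
\|\Phi(h)_1\|_{\textsc{CC}}=\|S_N(h)_1\|_{\textsc{CC}}\le C\,\|h\|_{\bch([0,1])},\qquad h\in\bch([0,1]).
\]
Granting this, let $g\in\hG_N(\mr^d)$ with $\|g\|_{\textsc{CC}}=1$ and let $h\in\bch([0,1])$ be any path with $\Phi(h)_1=g$; then $1\le C\,\|h\|_{\bch([0,1])}$, hence $\|h\|_{\bch([0,1])}\ge C^{-1}$, and taking the infimum over all such $h$ yields $\tilde d(g)\ge C^{-1}=:c$. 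By \eqref{d<dR}, also $\tilde d_R(g)\ge\tilde d(g)\ge c$. Since $C$ is independent of $g$, this is the desired uniform bound.

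To prove the displayed estimate I would combine two standard facts. First, the variational embedding of Lemma~\ref{lemma: variational embedding}: since $H>1/4$ we have $(H+1/2)^{-1}<4/3<2$, so in both cases $H>1/2$ (where $\bch\subseteq C_0^{H}\subseteq C_0^{(1/H)\mathrm{-var}}$ with $1/H<2$) and $1/4<H\le 1/2$ (where one picks any $q\in((H+1/2)^{-1},2)$) there exist an exponent $p\in(1,2)$ and a constant $C_H>0$, both depending only on $H$, such that $\|h\|_{p\mathrm{-var};[0,1]}\le C_H\,\|h\|_{\bch([0,1])}$ for all $h\in\bch([0,1])$. In particular every Cameron--Martin path has finite $p$-variation with $p<2$, so its truncated signature $S_N(h)_1$ is well defined by iterated Young integration and coincides with $\Phi(h)_1$. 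Second, the classical factorial estimates for iterated Young integrals (see, e.g., \cite[Ch.~9]{FV2010}): for $p<2$ and $1\le k\le N$,
\[
\Big\|\int_{\Delta^k[0,1]}dh^{\otimes k}\Big\|\le C_{k,p}\,\|h\|_{p\mathrm{-var};[0,1]}^{k}.
\]
Writing $\pi_k$ for the projection onto the $k$-th tensor level, the homogeneous norm $\max_{1\le k\le N}\|\pi_k S_N(h)_1\|^{1/k}$ is therefore at most $\big(\max_{1\le k\le N}C_{k,p}^{1/k}\big)\,\|h\|_{p\mathrm{-var};[0,1]}$, and by the equivalence of homogeneous norms with the Carnot--Carath\'eodory distance on $\hG_N(\mr^d)$ this gives $\|S_N(h)_1\|_{\textsc{CC}}\le C\,\|h\|_{p\mathrm{-var};[0,1]}\le C\,C_H\,\|h\|_{\bch([0,1])}$, as claimed.

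I do not expect a genuine obstacle: this is the easy half of the equivalence, requiring, unlike Proposition~\ref{Prop: norm upper bound}, no explicit construction of connecting paths. The one point deserving attention is the role of the hypothesis $H>1/4$: it is exactly what forces the relevant variational exponent strictly below $2$, which in turn is what makes the signature map controlled by a bound that is \emph{linear} in the path norm (through the factorial decay of the iterated integrals together with the $1/k$-homogeneity of the norm). That linearity is what produces a constant $c$ independent of $g$, so that no compactness argument on the set $\{\|g\|_{\textsc{CC}}=1\}$ is needed.
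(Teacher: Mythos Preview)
Your argument is correct and rests on the same two ingredients as the paper's proof: the continuous embedding of Lemma~\ref{lemma: variational embedding} (yielding a variational exponent $p<2$ thanks to $H>1/4$) and the standard Young-integral bounds for iterated integrals. The only difference is one of presentation. The paper argues by contradiction: assuming $\tilde d(g_n)\to 0$ along a sequence with $\|g_n\|_{\textsc{CC}}=1$, it extracts paths $h_n$ with $\|h_n\|_{\bch}\to 0$, hence $\|h_n\|_{p\text{-var}}\to 0$, and then invokes continuity of $h\mapsto S_N(h)_1$ at $0$ to force $g_n\to\mathbf{1}$, a contradiction. You instead establish the quantitative inequality $\|S_N(h)_1\|_{\textsc{CC}}\le C\,\|h\|_{\bch}$ directly and read off the lower bound; this is slightly more informative since it makes the constant explicit and avoids the sequential argument, but the underlying analytic content is identical.
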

\begin{proof}
Thanks again to relation \eqref{d<dR}, we only need to prove the claimed lower bound for $\tilde{d}$.

Note that $C_0^H([0,1]; \mr^d)$ is continuously embedded into $C^{1/H-\rm{var}}([0,T],\mr^d)$. By Lemma \ref{lemma: variational embedding}, we therefore have the following continuous embedding,
\begin{align}\label{embedding}\bar{\cH}\hookrightarrow C^{q-\rm{var}}([0,T];\mr^d),\end{align}
for $q=1/H$ when $H>1/2$; and $q>(H+1/2)^{-1}$ when $H\leq 1/2$. Observe that in either case, we can pick $q<2$.

In what follows, we prove our proposition by contradiction. Suppose the claimed result is not true for $\tilde{d}$. There exists a sequence $g_n\in G_N(\mr^d)$ with $\|g_n\|_{\textsc{CC}}=1$ such that 
$$\tilde{d}(g_n)\downarrow0,\quad \text{as}\ \,n\to\infty.$$
By the definition of $\tilde{d}$, we can find a sequence  $h_n\in{\bch([0,1])}$ such that $S_N(h_n)_1=g_n$ and 
$$\|h_n\|_{\bch([0,1])}\downarrow0,\quad\text{as}\ \,n\to\infty.$$
According to the continuous embedding in \eqref{embedding}, we must have for some $q<2$,
$$\|h_n\|_{q-\rm{var};[0,1]}\downarrow0\quad\text{as}\ \,n\to\infty.$$
This, together with the standard estimate for Young's integral, implies
$$g_n=S_N(h_n)_1\to \mathbf{1},$$
where $\mathbf{1}$ is the group identity in $\hG_N(\mr^d)$, and the convergence takes place in the topology induced by $\|\cdot\|_{\textsc{CC}}$. This contradicts with our assumption that $\|g_n\|_{\textsc{CC}}=1$. The proof is thus completed.
\end{proof}

Finally, we are ready to state and prove our main theorem in this section.
\begin{theorem}\label{th: equivalence of distances G}
Fix any $H>1/4$. There exit positive constants $c$ and $C$, not depending on $g$, such that 
$$c\,\|g\|_{\textsc{CC}}\leq \tilde{d}(g)\leq \tilde{d}_R(g)\leq C\,\|g\|_{\textsc{CC}},$$
for all $g\in \hG_N(\mr^d)$.  Therefore, $\tilde{d}(\cdot)$, $\tilde{d}_R(\cdot)$, and $\|\cdot\|_{\textsc{CC}}$ are equivalent.
\end{theorem}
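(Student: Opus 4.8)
## Proof proposal

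The plan is to prove the chain of inequalities $c\,\|g\|_{\textsc{CC}}\leq \tilde{d}(g)\leq \tilde{d}_R(g)\leq C\,\|g\|_{\textsc{CC}}$ by combining the dilation-homogeneity of all three quantities with the uniform bounds already established on the ``unit sphere'' $\{\|g\|_{\textsc{CC}}=1\}$. The middle inequality $\tilde{d}(g)\leq\tilde{d}_R(g)$ is immediate from the definitions \eqref{def: d} and \eqref{def: dr}, since the infimum defining $\tilde d_R$ is taken over a smaller set of admissible paths; this is recorded in \eqref{d<dR}. So the real content is the two outer inequalities, and by the symmetry of the setup both reduce to controlling $\tilde d$ and $\tilde d_R$ simultaneously.

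\textbf{Step 1: Scaling.} First I would establish that $\tilde{d}$ and $\tilde{d}_R$ are homogeneous of degree one with respect to the canonical dilations, i.e. $\tilde{d}(\Delta_\lambda g)=\lambda\,\tilde{d}(g)$ and likewise for $\tilde{d}_R$. This follows from Lemma~\ref{lem: CM scaling}: if $h\in\bch([0,1])$ satisfies $\Phi(h)_1=g$, then after the time-change $\tilde h_t=h_t$ combined with scaling the driving path by a constant and reparametrizing, one produces a path whose signature is $\Delta_\lambda g$ and whose Cameron-Martin norm is $\lambda\|h\|_{\bch}$; here one uses that $S_N(ch)_1=\Delta_c S_N(h)_1$ for a deterministic path together with the self-similarity-type identity in Lemma~\ref{lem: CM scaling}. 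The non-degeneracy of $\langle\D\Phi(h),\D\Phi(h)\rangle_{\cH}$ is preserved under this operation (it is a linear reparametrization/rescaling), so the same scaling identity holds for $\tilde d_R$. Since $\|\Delta_\lambda g\|_{\textsc{CC}}=\lambda\|g\|_{\textsc{CC}}$ by the properties of the C-C distance recorded earlier, all three functions scale the same way.

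\textbf{Step 2: Reduction to the unit sphere and conclusion.} Given any $g\neq\mathbf{1}$, set $\lambda=\|g\|_{\textsc{CC}}$ and $g'=\Delta_{1/\lambda}g$, so that $\|g'\|_{\textsc{CC}}=1$. By Proposition~\ref{Prop: norm upper bound}, $\tilde d(g')\leq\tilde d_R(g')\leq C$, and by Proposition~\ref{Prop: norm lower bound}, $\tilde d(g')\geq c$ (and hence $\tilde d_R(g')\geq c$ via \eqref{d<dR}). Applying the homogeneity from Step~1, $\tilde d(g)=\lambda\,\tilde d(g')\in[c\lambda,C\lambda]$ and similarly for $\tilde d_R$, which is exactly the claimed inequality $c\|g\|_{\textsc{CC}}\leq\tilde d(g)\leq\tilde d_R(g)\leq C\|g\|_{\textsc{CC}}$. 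The case $g=\mathbf{1}$ is trivial since all quantities vanish (the constant path works). Equivalence of the three functions is then a restatement.

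\textbf{Main obstacle.} The routine-looking but delicate point is Step~1, establishing the exact degree-one homogeneity of $\tilde d$ and especially of $\tilde d_R$ under the \emph{non-homogeneous} dilations $\Delta_\lambda$. The signature map interacts with the dilation $\Delta_\lambda$ on $\hG_N(\mr^d)$ via scaling the \emph{path} by $\lambda$ (not by layer), so one must check carefully that multiplying a Cameron-Martin path $h$ by the scalar $\lambda$ keeps it in $\bch([0,1])$ with norm exactly $\lambda\|h\|_{\bch}$ — this is true because $\bch$ is a vector space and $\|\cdot\|_{\bch}$ is a genuine Hilbert norm, so scalar multiplication is trivially isometric up to the scalar; the subtlety is only in confirming $\Phi(\lambda h)_1=\Delta_\lambda\Phi(h)_1$, which holds by the multiplicativity/homogeneity of iterated integrals at each tensor level. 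For $\tilde d_R$ one additionally needs that $h\mapsto\lambda h$ sends non-degenerate points of $\D\Phi$ to non-degenerate points; this is clear since $\D\Phi(\lambda h)$ and $\D\Phi(h)$ are related by an invertible linear transformation (the derivative of $\Delta_\lambda$) composed with the obvious scaling, so the Malliavin covariance matrix changes by conjugation with an invertible matrix and its rank is unchanged.
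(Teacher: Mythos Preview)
Your proposal is correct and follows essentially the same approach as the paper: establish degree-one homogeneity of $\tilde d$ and $\tilde d_R$ under the dilations via $S_N(\lambda h)=\Delta_\lambda S_N(h)$ and the Hilbert-space scaling $\|\lambda h\|_{\bch}=|\lambda|\,\|h\|_{\bch}$, then combine with Propositions~\ref{Prop: norm upper bound} and~\ref{Prop: norm lower bound} on the unit sphere. The only quibble is that your Step~1 invokes Lemma~\ref{lem: CM scaling} and ``time-change/reparametrization,'' which is an unnecessary detour---the paper (and your own ``Main obstacle'' paragraph) uses the simpler observation that $h\mapsto\lambda h$ already does the job, with no reparametrization needed.
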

\begin{proof}
The second inequality is simply \eqref{d<dR}. Hence we only need to prove that $\tilde{d}(\cdot)$ (respectively, $\tilde{d}_R(\cdot)$) and $\|\cdot\|_{\textsc{CC}}$ are equivalent.

 Recall that $\triangle$ is the canonical dilation on $\hG_N(\mr^d)$. For any $h\in\bch([0,1])$ and $\lambda\in\mr$ we have
$$S_N(\lambda h)=\triangle_\lambda S_N(h).$$
Therefore both $\tilde{d}$ and $\tilde{d}_R$ are homogeneous with respect to the dilation, that is,
$$\tilde{d}(\triangle_\lambda g)=|\lambda|\tilde{d}(g)\ \ \text{and}\ \ \tilde{d}_R(\triangle_\lambda g)=|\lambda|\tilde{d}_R(g),\ \quad \text{for\ all}\ g\in \hG_N(\mr^d), \lambda\in\mr.$$
The rest of the proof then follows from Proposition \ref{Prop: norm upper bound} and Proposition \ref{Prop: norm lower bound}.
\end{proof}

As a direct corollary  of Remark \ref{rem: identify distances g and G} and Theorem \ref{th: equivalence of distances G}, we have the following equivalence of ``distances" on $\mathbb{R}^n$.

\begin{corollary}\label{th: equivalence of distances g}
Fix any $H>1/4$. There exit positive constants $c$ and $C$, not depending on $u$, such that 
$$c\,\|u\|_{\textsc{CC}}\leq {d}(u)\leq {d}_R(u)\leq C\,\|u\|_{\textsc{CC}},$$
for all $u\in \mathbb{R}^n$.  Therefore, ${d}(\cdot)$, ${d}_R(\cdot)$, and $\|\cdot\|_{\textsc{CC}}$ are equivalent.
\end{corollary}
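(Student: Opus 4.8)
The plan is to obtain Corollary \ref{th: equivalence of distances g} as a direct transcription of Theorem \ref{th: equivalence of distances G} through the global isomorphism $\varphi:\hG_N(\mr^d)\to(\mr^n,\star)$ that underlies the log-signature. First I would invoke Remark \ref{rem: identify distances g and G}: since $\Psi(h)_t=[\exp^{-1}(\Phi(h)_t)]_\mathcal{B}=\varphi(\Phi(h)_t)$ and $\exp:\frak{g}_N(\mr^d)\to\hG_N(\mr^d)$ is a global diffeomorphism, the constraint $\Phi(h)_1=g$ is equivalent to $\Psi(h)_1=u$ whenever $u=[\exp^{-1}(g)]_\mathcal{B}$, and the non-degeneracy condition on $\langle\D\Phi(h),\D\Phi(h)\rangle_\cH$ translates verbatim into the one on $\langle\D\Psi(h),\D\Psi(h)\rangle_\cH$ (the two deterministic Malliavin matrices differ by pre- and post-multiplication with the invertible Jacobian of $\varphi$, which preserves invertibility). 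Hence the infima in \eqref{def: d1}--\eqref{def: dr1} coincide with those in \eqref{def: d}--\eqref{def: dr}, giving $d(u)=\tilde d(g)$ and $d_R(u)=\tilde d_R(g)$ for every $u\in\mr^n$.

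Second, I would note that $\varphi$ is an isomorphism of Carnot groups: it intertwines the dilations and sends the first stratum $\mathcal{V}_1$ onto $\mathcal{U}_1$. Equipping $\mathcal{U}_1$ with the pushforward inner product — the convention already fixed in Section 2.2 — $\varphi$ carries horizontal curves to horizontal curves of the same length and is therefore an isometry for the Carnot--Carath\'eodory distances; in particular $\|g\|_{\textsc{CC}}=\|u\|_{\textsc{CC}}$. Even if one only wants equivalence rather than equality, the equivalence of homogeneous norms on a Carnot group gives $\|g\|_{\textsc{CC}}$ and $\|u\|_{\textsc{CC}}$ comparable, which is all that is needed.

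Combining these two facts with the chain $c\,\|g\|_{\textsc{CC}}\le\tilde d(g)\le\tilde d_R(g)\le C\,\|g\|_{\textsc{CC}}$ of Theorem \ref{th: equivalence of distances G} immediately yields $c\,\|u\|_{\textsc{CC}}\le d(u)\le d_R(u)\le C\,\|u\|_{\textsc{CC}}$ for all $u\in\mr^n$, hence the asserted equivalence of $d(\cdot)$, $d_R(\cdot)$ and $\|\cdot\|_{\textsc{CC}}$. I do not expect any genuine obstacle here: the only point requiring care is bookkeeping — making sure the same basis $\mathcal{B}$, the same group law $\star$, and the same first-layer inner product are used on both sides throughout — all of which are pinned down in the preliminaries, so the corollary is essentially immediate once Theorem \ref{th: equivalence of distances G} is available.
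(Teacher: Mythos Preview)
Your proposal is correct and follows exactly the paper's approach: the corollary is stated there as an immediate consequence of Remark~\ref{rem: identify distances g and G} and Theorem~\ref{th: equivalence of distances G}, with no further proof given. Your write-up simply fills in the bookkeeping the paper leaves implicit (the Jacobian argument for transferring non-degeneracy and the identification of the two Carnot--Carath\'eodory norms), so there is nothing to add.
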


%
%
%

%
%

%
%
%


\begin{thebibliography}{99}
\bibitem{ABB2012} A.	 Agrachev, D. Barilari and U. Boscain: Introduction to Riemannian and sub-Riemannian geometry. {\it SISSA;09/2012/M.} (2012), http://hdl.handle.net/1963/5877. 

\bibitem{B-N Saint-Flour}
 M. Barlow and D. Nualart: 
Lectures on probability theory and statistics.  
{\it Lecture Notes in Mathematics} {\bf 1690}. Springer-Verlag,  1998.

\bibitem{Bau} F. Baudoin: An introduction to the geometry of stochastic flows. {\it Imperial College Press, London}, 2004. 152 pp.

\bibitem{BC06} F. Baudoin and L. Coutin: Self-similarity and fractional Brownian motions on Lie groups. {\it  Electron. J. Probab.}
Volume {\bf 13} (2008), no. 38, 1120-1139.


\bibitem{BH} F. Baudoin and M. Hairer: 
A version of H\"ormander's theorem for the fractional Brownian motion. 
{\it Probab. Theory Related Fields} {\bf 139} (2007), no. 3-4, 373--395.



\bibitem{BNOT16} F. Baudoin, E. Nualart, C. Ouyang and S. Tindel: On probability laws of solutions to differential systems driven by a fractional Brownian motion.  {\it Ann. Probab.},  {\bf44} (2016), no. 4, 2554-2590.

\bibitem{BOT14}F. Baudoin, C. Ouyang and S. Tindel: Upper bounds for the density of solutions to stochastic differential equations driven by fractional Brownian motions. {\it Ann. Inst. Henri Poincar\'{e} Probab. Stat.}, vol {\bf 50} (2014), No. 1, 111-135.

\bibitem{BOZ15}F. Baudoin, C. Ouyang and X. Zhang: Varadhan estimates for rough differential equations driven by fractional Brownian motions. {\it Stochastic Process. Appl.}, vol {\bf 125} (2015), Issue 2, 634-652.

\bibitem{BOZ16}F. Baudoin, C. Ouyang and X. Zhang: Smoothing effect of rough differential equations driven by fractional Brownian motions. {\it Ann. Inst. Henri Poincar\'{e} Probab. Statist.}, vol {\bf 52} (2016), No.1, 412-428.


\bibitem{CHLT} T. Cass, M. Hairer, C. Litterer and S. Tindel:
Smoothness of the density for solutions to Gaussian Rough Differential Equations.
{\it Ann. Probab.}, {\bf 43} (2015), no. 1, 188-239.


\bibitem{Chen54}K. Chen: {Iterated integrals and exponential homomorphisms}, {\it Proc. London Math. Soc.} {\bf 4}(3): 502-512, 1954.



\bibitem{DU97}L. Decreusefond, A. \"{U}st\"{u}nel: Stochastic analysis of the fractional Brownian motion, {\it Potential Anal.} {\bf 10} (1997), 117-214.


\bibitem{FO2010} P. Friz and H. Oberhauser: A generalized Fernique theorem and applications. {\it Proc. Amer. Math. Soc.} {\bf 138} (2010), 3679-3688.

\bibitem{FV2010} P. Friz and N. Victoir: Multidimensional stochastic process as rough paths: theory and applications. {\it Cambridge studies in advanced mathematics}, {\bf 120}, 2010.


\bibitem{OTG19} X. Geng, C. Ouyang and S. Tindel and : {Precise Local Estimates for Hypoelliptic Differential Equations Driven by Fractional Brownian Motion}. {\it preprint}, 2019. 

\bibitem{GOT18}B. Gess, C. Ouyang, and S. Tindel: Density bounds for solutions  to differential equations driven by Gaussian rough paths. {\it in revision for Journal of Theoretical Probability}, 2018.


\bibitem{Inahama} Y. Inahama: Malliavin differentiability of solutions of rough differential equations. {\it J. Funct. Anal.}, {\bf267} (5), 1566-1584, 2013. 

\bibitem{IN19}Y. Inahama and N. Naganuma: Asymptotic expansion of the density for hypoelliptic rough differential equation, Arxiv preprint, 2019.



\bibitem{KS87}S. Kusuoka and D. Stroock, Applications of the Malliavin calculus, Part III, \textit{J. Fac. Sci. Univ. Tokyo} {\bf 34} (1987), 391--442.


\bibitem{Lyons Saint-Flour} T. Lyons, M. Caruana and T. L\'{e}vy: Differential Equations Driven by Rough Paths. {\it Lecture Notes in Mathematics. {\bf 1908}, Ecole d'Et\'e de Probabilit\'es de Saint-Flour XXXIV}, 2004.


\bibitem{Leandre PTRF87}R. L\'{e}andre: Integration dans la fibre associ\'{e}e a une diffusion d\'{e}g\'{e}n\'{e}r\'{e}e. {\it Probab. Theory Related Fields}, Vol {\bf 76}, Issue 3, 341-358, 1987.


\bibitem{MS} A. Millet and M. Sanz-Sol\'e: Large deviations for rough paths of the fractional Brownian motion.  {\it Ann. Inst. H. Poincar\'e Probab. Statist.} {\bf 42} (2006), no. 2, 245-271.

\bibitem{Nualart06} D. Nualart: Malliavin Calculus and Related Topics. {\it Probability and Its Applications.} Vol. {\bf 1995}. Berlin: Springer, 2006.


\end{thebibliography}
\end{document}